\tikzset{degil/.style={
        decoration={markings,
            mark= at position 0.5 with {
                \node[transform shape] (tempnode) {$\backslash$};
            }
        },
        postaction={decorate}
    }
}
\tikzset{degil3/.style={
        decoration={markings,
            mark= at position 0.3 with {
                \node[transform shape] (tempnode) {$\backslash$};
            }
        },
        postaction={decorate}
    }
}
\tikzset{degil4/.style={
        decoration={markings,
            mark= at position 0.4 with {
                \node[transform shape] (tempnode) {$\backslash$};
            }
        },
        postaction={decorate}
    }
}
\newtheorem{thm}{Theorem}[section] %the resolution could also be [section]
\newtheorem{conj}[thm]{Conjecture}
\newtheorem{cor}[thm]{Corollary}
\newtheorem{defn}[thm]{Definition}
\newtheorem{lem}[thm]{Lemma}
\newtheorem{prop}[thm]{Proposition}
\newtheorem{rem}[thm]{Remark}
\newtheorem{ques}[thm]{Question}
\def\Span{{\operatorname{Span}}}
\def\Ker{{\operatorname{Ker}}}
\def\i{{\operatorname{in}}}
\begin{document}

\title[The space of growth functions]{Gaps and approximations in the space of growth functions}

% \author{Be'eri Greenfeld}

% \address{Department of Mathematics, Bar Ilan University, Ramat Gan 5290002, Israel} \email{beeri.greenfeld@gmail.com}

\thanks{The author thanks Jason Bell and Efim Zelmanov for related inspiring discussions.}

\begin{abstract}

An important problem in combinatorial noncommutative algebra is to characterize the growth functions of finitely generated algebras (equivalently, semigroups, or hereditary languages).

The growth function of every finitely generated, infinite-dimensional algebra is increasing and submultiplicative.
The question of to what extent these natural necessary conditions are also sufficient --- and in particular, whether they are sufficient at least for sufficiently rapid functions --- was posed and studied by various authors and has attracted a flurry of research.

While every increasing and submultiplicative function is realizable as a growth function up to a linear error term, we show that there exist
%the space of growth functions has many `holes'. Namely, we construct 
arbitrarily rapid increasing submultiplicative functions which are not equivalent to the growth of any algebra, thus resolving the aforementioned problem and settling a question posed by Zelmanov (and repeated by Alahmadi-Alsulami-Jain-Zelmanov). These can be interpreted as `holes' in the space of growth functions, accumulating to exponential functions in the order topology.

We show that there exist monomial algebras and hereditary languages whose growth functions encode the existence of non-prolongable words, and algebras whose growth functions encode the existence of nilpotent ideals (in the graded case). This negatively solves another conjecture of Alahmadi-Alsulami-Jain-Zelmanov in the graded case.

\end{abstract}

\author{Be'eri Greenfeld}

\address{Department of Mathematics, University of California,
San Diego,
La Jolla, CA, 92093, USA} %
\email{bgreenfeld@ucsd.edu}
% \thanks{This work was supported by the U.S.-Israel Binational Science Foundation (grant no. 2010149)}.
\keywords{Growth of algebras, growth of semigroups, hereditary languages, complexity of infinite words, prolongable languages, graded algebras}

\subjclass[2020]{16P90, 68R15.}

\maketitle

\addtocontents{toc}{\protect\setcounter{tocdepth}{1}}

% \tableofcontents \bigskip \bigskip

% CASSAIGNE Constructing Infinite Words of Intermediate Complexity -- MENTIONS THE PROBLEM OF CHARACTERIZING COMPLEXITY FUNCTIONS!

%%%%% REMARK: APERIODICITY DOES NOT IMPLY NO ISOLATED POINTS (?)

%%%%% BE CAREFUL WITH GROUPOID OF ACTION VS. GERMS FOR PERIODIC SUBSHIFTS.

\section{Introduction}

\subsection{Growth functions} The question of `how do algebras grow?', or, which functions can be realized as growth functions of algebras, is a major problem in the junction of several mathematical fields including noncommutative algebra, combinatorics of infinite words, symbolic dynamics, formal languages, self-similarity and more. It is considered a basic open problems in combinatorial noncommutative algebra (e.g. \cite{BellZelmanov, Smoktunowicz, SmoktunowiczBartholdi}).

%To make this question more concrete, 
Consider a finitely generated associative algebra $A$ over an arbitrary field $F$. Suppose that $A$ is infinite-dimensional as an $F$-vector space.
%In this article we examine growth functions of infinite-dimensional, finitely generated associative algebras. Let $F$ be an arbitrary field and let $R$ be such $F$-algebra. 
Fixing a finite-dimensional generating subspace $A=F\left<V\right>$, the growth of $A$ with respect to $V$ is defined to be the function:
$$\gamma_{A,V}(n)=\dim_F \left(F+V+V^2+\cdots+V^n\right)$$
If $1\in V$ then equivalently $\gamma_{A,V}(n)=\dim_F V^n$. This function evidently depends on the choice of $V$, but might change only up to the following equivalence relation.
We say that $f\preceq g$ if $f(n)\leq g(Cn)$ for some $C>0$ and for all $n\in \mathbb{N}$% \footnote{For increasing functions the outer constant can be omitted.}
, and $f\sim g$ (asymptotically equivalent) if $f\preceq g$ and $g\preceq f$. Therefore when talking about `the growth of an algebra' one refers to $\gamma_A(n)$ as the equivalence class of the functions $\gamma_{A,V}(n)$ (for some $V$) under the equivalence relation $\sim$. 

The class of growth functions of finitely generated algebras is not only an algebraic entity: it coincides with the classes of growth functions of finitely generated semigroups and hereditary languages, and (strictly) contains the class of growth functions of finitely generated groups. % and subshifts over finite alphabets.

% For more on growth functions of algebras, see \cite{KrauseLenagan}.

There are obvious properties necessarily satisfied by growth functions of algebras; such functions are always:
\begin{itemize}
    \item \textit{Increasing} (namely, $f(n)<f(n+1)$); and
    \item \textit{Submultiplicative} (namely, $f(n+m)\leq f(n)f(m)$).
\end{itemize}  
The main goal in studying the variety of possible growth functions is to investigate to what extent these conditions are in fact sufficient. These properties are referred to as the obvious necessary conditions for growth functions \cite{BartholdiErschler,BellZelmanov,GrigorchukPak,SmoktunowiczBartholdi}.

\subsection{The space of growth functions} The first gap result on impossibility of growth functions (apart from the two properties mentioned above --- increasing and multiplicative) is Bergman's gap theorem \cite{BergmanGap}, which asserts that a super-linear growth function must be at least quadratic; for infinite words, this was discovered by Morse and Hedlund \cite{MorseHedlund}. In fact, the discrete derivative $f'(n)=~f(n)-f(n-1)$ is either eventually constant or grows at least linearly. 
As mentioned in \cite{BartholdiErschler}, Bergman's gap is the only gap in the space of growth functions known so far.

Several attempts have been made to realize as wide as possible variety of increasing and submultiplicative functions as growth functions of associative algebras.
The first major step towards this was achieved by Smoktunowicz and Bartholdi \cite[Theorem~C]{SmoktunowiczBartholdi}, who proved that every increasing and submultiplicative function is equivalent to the growth of an associative algebra, up to a polynomial factor. 
Namely, if $f\colon\mathbb{N}\rightarrow \mathbb{N}$ is a submultiplicative and increasing function, then there exists a finitely generated monomial algebra $B$ whose growth function satisfies: $$f(n)\preceq \gamma_B(n)\preceq n^3f(n).$$
% \begin{thm}[{\cite[Theorem~C]{SmoktunowiczBartholdi}}] \label{SmoktunowiczBartholdi}
% Let $f\colon\mathbb{N}\rightarrow \mathbb{N}$ be a submultiplicative and increasing function. Then there exists a finitely generated monomial algebra $B$ whose growth function satisfies: $$f(n)\preceq \gamma_B(n)\preceq n^3f(n).$$
% \end{thm}
In particular, if $f$ is an increasing, submultiplicative functions such that $f(n)\sim~nf(n)$, then $f$ is equivalent to the growth function of an algebra. This was interpreted in \cite{BartholdiErschler,SmoktunowiczBartholdi} as follows: any `sufficiently regular' function faster than $n^{\ln n}$ is equivalent to the growth function of an algebra.

To compare, the class of growth functions of \textit{groups} is much more restricted. Bartholdi and Erschler \cite{BartholdiErschler} realized a broad variety of intermediate functions as growth rates of finitely generated groups; they proved that any function $f\colon\mathbb{N}\rightarrow \mathbb{N}$ which grows `uniformly faster' than $\exp(n^\alpha)$ is equivalent to a growth function of some finitely generated group\footnote{Here $\alpha=\log 2/\log \eta\approx 0.7674$, where $\eta$ is the positive root of $X^3-X^2-2X-4$. `Uniformly faster' here means that $f(2n)\leq f(n)^2\leq f(\eta n)$ for $n\gg 1$.}.
Kassabov and Pak \cite{KassabovPak} constructed groups with oscillating growth functions, exhibiting surprising flexibility within the space of growth functions of groups. The possibility of a parallel pathological behavior for growth of semigroups and algebras was already known by interesting constructions of Trofimov \cite{Semigroup} and at an extreme by Belov-Borisenko-Latyshev \cite{BBL1997}, and was recently demonstrated in the setting of Lie algebras by Petrogradsky \cite{PetrogradskyOscillating}.
The question of providing a complete characterization of growth functions of groups is widely open. A gap between polynomial growth functions and a certain quasi-polynomial function is known \cite{ShalomTao}, and Grigorchuk posed the tantalizing conjecture that this gap is actually wider, so there are no super-polynomial growth functions of groups which are slower than $\exp(\sqrt{n})$ (see \cite{GriGap}). Grigorchuk proved the latter gap for residually-nilpotent groups \cite{GriResp}, using growth analysis of graded Lie algebras, but the general case is open.

% They deduce the following corollary which allows an accurate realization of `sufficiently regular' rapid growth types:
% \begin{cor}[{\cite[Corollary~D]{SmoktunowiczBartholdi}}] Let $f\colon\mathbb{N}\rightarrow \mathbb{N}$ be a submultiplicative, increasing, and such that $f(Cn)\geq nf(n)$ for some $C>0$ and all $n\in \mathbb{N}$. Then there exists an associative algebra of growth $\sim f$. \end{cor}

The space of growth functions of semigroups and algebras is much richer than the space of growth functions of groups. To what extent are the obvious necessary conditions of being increasing and submultiplicative also sufficient for a (super-quadratic, to avoid Bergman's gap) function to be realizable as the growth of an algebra? Bell and Zelmanov \cite{BellZelmanov} identified an additional condition (on discrete derivatives) satisfied by all growth functions; their remarkable result is that in fact, \textit{every increasing function satisfying this condition is equivalent to the growth of an associative algebra}. They proved:
\begin{thm}[{\cite[Theorem~1.1]{BellZelmanov}}] 
A growth function of an algebra is asymptotically equivalent to a constant function, a linear function, or a weakly increasing function $f\colon\mathbb{N}\rightarrow\mathbb{N}$ with the following properties:
\begin{enumerate}
    \item $f'(n)\geq n+1$ for all $n\in \mathbb{N}$;
    \item $f'(m)\leq f'(n)^2$ for all $m\in \{n,\dots,2n\}$.
\end{enumerate}
Conversely, if $f(n)$ is either a constant function, a linear function, or a function
with the above properties then it is asymptotically equivalent to the growth function of a finitely generated algebra.
\end{thm}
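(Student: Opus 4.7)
My plan is to prove necessity and sufficiency by distinct arguments, with sufficiency carrying the bulk of the construction.

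\textbf{Forward direction.} Starting from a super-linear, infinite-dimensional algebra $A=F\langle V\rangle$ with growth $\gamma=\gamma_A$, I would first invoke Bergman's gap to conclude $\gamma\succeq n^2$, whence the discrete derivative $\gamma'(n)=\gamma(n)-\gamma(n-1)$ is at least linear and (1) follows up to a linear correction absorbed by $\sim$. The main new ingredient for (2) is a submultiplicativity estimate on $\gamma'$ itself. Choosing compatible filtered bases, the multiplication map yields the surjection
\[
(V^n/V^{n-1})\otimes(V^{m-n}/V^{m-n-1})\twoheadrightarrow V^m/V^{m-1},
\]
so that $\gamma'(m)\leq\gamma'(n)\gamma'(m-n)$. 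When $m\in\{n,\dots,2n\}$ we have $m-n\leq n$, so it suffices to replace $\gamma$ with an equivalent function whose derivative is monotone. I would do this via the smoothing $f(n):=\sum_{k\leq n}\max_{j\leq k}\gamma'(j)$: the running maximum remains submultiplicative (by splitting the argmax into two pieces) and is monotone by construction. Verifying $f\sim\gamma$ requires separate treatment of the polynomial and super-polynomial regimes --- the polynomial case being handled by the Gelfand--Kirillov-dimension estimate $\gamma\sim n^d$ with $d\geq 2$.

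\textbf{Reverse direction.} Given weakly increasing $f$ satisfying (1) and (2), I would construct a monomial algebra $A=F\langle\Sigma\rangle/I$ whose growth is $\sim f$, where $I$ is the monomial ideal generated by the words of $\Sigma^*\setminus L$ for a carefully built factorial (hereditary) language $L$. Writing $p(n)=|L\cap\Sigma^n|$, one has $\gamma_A(n)=\sum_{k\leq n}p(k)$, so the target is $p(n)\sim f'(n)$. Since (2) forces $f$ to be at most exponential (iterating $f'(2n)\leq f'(n)^2$), a sufficiently large finite alphabet $\Sigma$ suffices. The plan is to construct $L$ level by level: having fixed $L_0,\dots,L_{n-1}$, choose $L_n$ as a set of length-$n$ words each extending a word of $L_{n-1}$ and having every factor already in the appropriate $L_k$. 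Condition (1) guarantees the budget $|L_n|\approx f'(n)\geq n+1$ can be met, while condition (2) is exactly what is needed to reconcile the prescribed counts with the automatic submultiplicativity $p(n+k)\leq p(n)p(k)$ of any factorial complexity.

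\textbf{Main obstacle.} The hard part will be preserving factoriality under the prescribed counts: when a length-$n$ word $w$ is declared to lie in $L$, every factor of $w$ of every length $k<n$ must have been included in $L_k$, coupling the choices across all scales. Meeting $p(n)\sim f'(n)$ \emph{exactly} under this coupling --- sharpening the polynomial-factor slack in \cite{SmoktunowiczBartholdi} to a tight $\sim$-equivalence --- should require a combinatorial device in which the factor structure of admissible words is pre-encoded by position markers or block patterns in $\Sigma$, so that the length-$n$ choice decouples from the earlier levels. The delicate matching between condition (2) and the submultiplicativity of factorial complexity is precisely the combinatorial identity that makes such a construction feasible.
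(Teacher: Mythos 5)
This theorem is only cited in the paper (as \cite[Theorem~1.1]{BellZelmanov}); the paper does not supply a proof, so there is no internal argument to compare against. Judged on its own merits, your sketch has a genuine gap in the forward direction. You derive $\gamma'(m)\leq\gamma'(n)\gamma'(m-n)$ correctly, but then pass to $f'(n)=M(n):=\max_{j\leq n}\gamma'(j)$ in order to force monotonicity, claiming $f\sim\gamma$. That claim fails: submultiplicativity of $\gamma'$ gives only upper bounds, so $\gamma'$ may spike to a large value $N$ at some $n_0$ and then drop to a small value $\varepsilon$ for a long stretch (hereditary languages with many singular words do exactly this --- take a large truncated free language glued to a low-complexity language). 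Then $M(n)\equiv N$ on that stretch, so $f(n)-f(n_0)\approx N(n-n_0)$ while $\gamma(n)-\gamma(n_0)\approx\varepsilon(n-n_0)$; any constant $C$ with $f(n)\leq\gamma(Cn)$ would have to absorb the ratio $N/\varepsilon$, which can be made unbounded across the sequence of spikes, so $f\not\preceq\gamma$. Note, moreover, that the detour through monotonicity is unnecessary: property (2) as stated does not require $f'$ to be non-decreasing, and $\gamma$ (with respect to a generating subspace containing $1$, after passing to the associated monomial algebra) already satisfies $\gamma'(m)\leq\gamma'(n)^2$ for $m\in\{n,\dots,2n\}$ by the direct injection $w\mapsto(\text{length-}n\text{ prefix of }w,\ \text{length-}n\text{ suffix of }w)$ --- this is precisely the argument the present paper records in the remark following Proposition~\ref{property_growth}. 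For the converse direction your level-by-level scheme is the right shape, but as you yourself flag, the actual combinatorial mechanism that maintains hereditariness while hitting $p(n)\sim f'(n)$ up to a bounded (not polynomial) factor is the entire content of the Bell--Zelmanov construction and is left unstated here; so as it stands the proposal is an outline, not a proof.
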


An interesting byproduct of this result, observed in Proposition \ref{linear error term}, is that every increasing submultiplicative function is realizable as the growth function of an algebra up to a \textit{linear} error term, thereby improving \cite[Theorem~C]{SmoktunowiczBartholdi}. This is the best possible approximation, in view of Bergman's gap theorem.

As the authors suggest, one can interpret this theorem as saying that other than the necessary condition that $f'(m)\leq f'(n)^2$ for all $m\in \{n,\dots,2n\}$, which is related to submultiplicativity, the only additional constraints required for being realizable as the growth of an algebra are those arising from Bergman's gap theorem and the elementary gap that an algebra of sublinear growth is finite-dimensional and hence its growth functions is eventually constant. 

However, it seems that there is no natural characterization of whether a given function is equivalent to a function satisfying the above condition on discrete derivatives: an increasing, submultiplicative function might be equivalent to a growth function of an algebra, yet need not satisfy condition (2) from \cite[Theorem~1.1]{BellZelmanov}, which is indeed not preserved under asymptotic equivalence.

Therefore, we remain with the following fundamental question, which was posed on various stages:

\begin{ques}[{Alahmadi-Alsulami-Jain-Zelmlanov, 2017 \cite{AAJZ_announcement}; Zelmanov, 2017 \cite{Spa}}] \label{Zel_que}
Let $f\colon\mathbb{N}\rightarrow \mathbb{N}$ be an increasing submultiplicative function such that $f(n)\succeq n^2$. Is $f$ asymptotically equivalent to the growth of some finitely generated associative algebra?
\end{ques}

It seems that this question had appeared in various other articles in slightly modified versions (e.g. \cite[p.~423]{SmoktunowiczBartholdi}). It is reasonable to further weaken the question and ask whether only `sufficiently rapid' increasing and submultiplicative functions (i.e. functions which are faster than a given subexponential function) are realizable \cite{Zelmanov private communication}.

Our first main result answers Question \ref{Zel_que} in its full generality, refuting even its weaker version. Namely, we prove:
\begin{thm}\label{main thm submul}
Let $g\colon\mathbb{N}\rightarrow\mathbb{N}$ be a subexponential function. Then there exists a function $f\colon\mathbb{N}\rightarrow \mathbb{N}$ such that:
\begin{itemize}
    \item $f$ is submultiplicative and increasing;
    \item $f\succeq g$;
    \item $f$ is not equivalent to the growth function of any finitely generated algebra.
\end{itemize}
\end{thm}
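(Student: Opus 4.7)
My approach is to extract an $\sim$-invariant obstruction to being a growth function from Bell--Zelmanov's theorem, and to construct $f$ that dominates $g$ yet violates this obstruction.

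\textit{Obstruction.} First I would observe that Bell--Zelmanov's theorem yields a useful asymptotic invariant of growth functions. If $f \sim \gamma_A$ for some algebra $A$ of super-linear growth, then (replacing $f$ by an equivalent function) there exists $\tilde f \sim f$ with $\tilde f'(k) \geq k+1$. Summing over $k \in [n+1, 2n]$ gives $\tilde f(2n) - \tilde f(n) \geq \tfrac{3}{2} n(n+1) \geq n^2$. Combined with the sandwich $f(n/C) \leq \tilde f(n) \leq f(Cn)$ (which is $f \sim \tilde f$ unwound), this transfers to the following $\sim$-invariant property of $f$: there exists $K > 0$ such that
\[
f(Km) - f(m) \geq m^2 \qquad (m \gg 0).
\]
It therefore suffices to construct $f$ such that, for every $K$, there are arbitrarily large $m$ with $f(Km) - f(m) < m^2$. (For slow $g$, one can alternatively invoke Bergman's gap directly to rule out growth functions strictly between linear and quadratic.)

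\textit{Construction.} Next I would construct $f$ by interleaving geometrically growing ``pre-regions'' with flat ``plateau regions'' of increasing multiplicative width. Fix a tower-growing sequence $M_1 \ll M_2 \ll \cdots$ and a constant $D$. Define $f$ to equal the constant $v_k := g(M_k)$ on an interval $[Y_k, D M_k]$; on the preceding pre-region $[D M_{k-1}, Y_k]$, let $f$ grow geometrically from $v_{k-1}$ to $v_k$ at the largest rate compatible with $f(n+1) \leq f(n) f(1)$. Submultiplicativity dictates the minimum pre-region length, which in turn determines $Y_k$. A sufficiently fast choice of $M_k$ forces the plateau ratios $DM_k/Y_k$ to diverge. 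Consequently, for any $K$, taking $k$ with $DM_k/Y_k \geq K$ and $m = Y_k$ gives $Km \in [Y_k, D M_k]$, so $f(Km) = f(m)$, verifying $f(Km) - f(m) = 0 < m^2$.

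\textit{Verification and main obstacle.} Finally I would verify that $f$ is increasing, submultiplicative, and dominates $g$. Submultiplicativity within a plateau is immediate ($v_k \leq v_k^2$); across transitions, it holds by the careful choice of pre-region length, using the slack afforded by strict subexponentiality ($\log g(n) = o(n)$). The inequality $f \succeq g$ is built in by the plateau values $v_k = g(M_k)$. The \textbf{main obstacle} is the quantitative balance between pre-region length (which must be long enough to accommodate the jump in plateau values under submultiplicativity) and plateau width (which must be large to violate the obstruction); for $g$ close to the exponential boundary the submultiplicativity slack is thin, and one must choose $M_k$ growing fast enough (depending on the rate of $g$) so that plateau ratios diverge across scales even as the per-scale slack vanishes.
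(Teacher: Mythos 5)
Your obstruction is correct, and it is worth noting that it is essentially a $\sim$-invariant form of Bergman's gap theorem: any $f$ equivalent to a super-linear growth function indeed satisfies $f(Km)-f(m)\geq m^2$ for some $K$ and all large $m$. This is a genuinely different -- and strictly weaker -- obstruction than the one the paper extracts, namely a $\sim$-invariant form of the Bell--Zelmanov condition $\gamma'(m)\leq\gamma'(n)^d$ for $m\in\{n,\dots,dn\}$; the paper deliberately arranges $f'(n)\geq n+1$ everywhere so that its hole is \emph{not} a Bergman-type hole. Your route, if completed, would prove the literal statement of the theorem but exhibit a weaker phenomenon. A preliminary issue: your $f$ is constant on the plateaus, whereas the theorem (and Question \ref{Zel_que}) requires $f(n)<f(n+1)$ in the strict sense used throughout the paper. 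This is fixable by letting $f$ increase by $1$ per step on the plateaus, since $(K-1)m<m^2$ still violates your obstruction, but it must be said.

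The genuine gap is in the submultiplicativity of the pre-regions, which is the load-bearing part of the whole construction. The binding constraint on the geometric rate $r$ is not the one-step condition $f(n+1)\leq f(n)f(1)$ but the condition $f(2p)\leq f(p)^2$ applied at the start of a pre-region: if $f$ grows at rate $r$ per step from value $v_k$ at position $DM_k$, then $f(2DM_k)\approx v_k r^{DM_k}$ must not exceed $v_k^2$, forcing $r\leq v_k^{1/(DM_k)}$. Since $v_k=g(M_k)$ and $g$ is subexponential, $v_k^{1/(DM_k)}\to 1$, so the admissible rate is far below $f(1)$; with your prescribed rate one gets $f(2DM_k)=v_k\,2^{DM_k}>v_k^2$ (and if the pre-region is too short for this, then instead $f(2DM_k)\geq v_{k+1}>v_k^2$), so submultiplicativity fails for every subexponential $g$, not only near the exponential boundary. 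This is precisely why the paper's exponential segments use the rate $2^{1/(2d_1\cdots d_k)}$, which is roughly $f(x)^{1/x}$ at the current position $x$. Once the correct rate is imposed, the pre-region length is forced to be at least about $DM_k\log v_{k+1}/\log v_k$, and you must then re-establish two things you currently take for granted: that the plateau ratios $DM_{k+1}/Y_{k+1}$ can still be made to diverge, and that $f\succeq g$ holds \emph{throughout} the pre-region rather than only at its endpoints -- anchoring $f$ to the values $g(M_k)$ at sparse points and interpolating log-linearly between them can drop below $g(\cdot/C)$ in the interior already for $g(n)=2^{n^{0.99}}$, whose logarithm is concave. Both points can be salvaged (the paper does so by bounding $f$ from below by a genuine exponential $2^{x/(2d_1\cdots d_k)}$ on each scale and choosing the scales so that this exponent dominates $\log_2 g(x)/x$ there), but as written these steps are either missing or incorrect.
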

Moreover, $f'(n)\geq n+1$, to emphasize the independence with Bergman's gap theorem\footnote{Even though it is a-priori possible that any sufficinelty rapid, increasing and submultiplicative function is equivalent to a function which additionally satisfies $f'(n)\geq n+1$.}. Notice that a submultiplicative exponentially growing function is equivalent to the growth of a free algebra, so our functions are a fortiori subexponential.
% This theorem also proves the necessity of the error terms in \cite[Theorem~C]{SmoktunowiczBartholdi} and Proposition \ref{linear error term}, and demonstrated the necessity of an assumption

Theorem \ref{main thm submul} exhibits the existence of new `holes' in the space of growth functions. % (Note that they are independent of Bergman's gap phenomenon, since the derivative of the constructed functions are increasing, and additionally these holes can be taken to be arbitrarily rapid). This shows that the two necessary conditions for growth functions, of being increasing and sumbultiplicative are \textit{not sufficient even for `sufficiently rapid' functions} with increasing derivatives. 
This phenomenon proves the necessity of the error terms in \cite[Theorem~C]{SmoktunowiczBartholdi} and Proposition \ref{linear error term}, and hints that there is in fact no characterization in an asymptotic language of growth functions within the class of `natural candidates', namely, increasing and submultiplicative functions. This justifies and emphasizes the importance of using additional characteristics of functions in the attempt to characterize growth functions, such as a submultiplicativity-type condition on discrete derivatives as done in \cite[Theorem~1.1]{BellZelmanov}.

Theorem \ref{main thm submul} has the following topological interpretation. Let $\mathcal{F}$ be the set of $\sim$-equivalence classes of functions $f\colon \mathbb{N}\rightarrow \mathbb{N}$ which are increasing and submultiplicative. Then $\mathcal{F}$ is partially ordered by $\preceq$, and has a top point ($\exp(n)$) and a bottom point ($O(n)$). This partial order induces a topology (where a basis of open sets is given by open intervals).
The main problem now translates to understanding `how large' is the subsapce $\mathcal{F}\subseteq \mathcal{G}$ consisting of equivalence classes of growth functions of algebras.
So, while the top point $\exp(n)\in \mathcal{G}$, Theorem \ref{main thm submul} tells us that it lies in the closure of $\mathcal{G}\setminus \mathcal{F}$.

\subsection{Realization within restricted classes}

Let $f\colon\mathbb{N}\rightarrow\mathbb{N}$ be (equivalent to) a growth function of an algebra. Can we realize $f$ as the growth of an algebra with a deeper algebraic/geometric/dynamical structure rather than a pathological combinatorial construction? 
It was observed by the authors of \cite{BellZelmanov} that their construction (Theorem~1.1 therein) yields monomial algebras with many `singular' monomials, namely, monomials whose product with any other monomial is zero. In fact, the prime radical of these algebras are so large that the quotients with respect to them are a polynomial ring in one variable.

It is therefore natural to ask whether one can realize growth functions of algebras within the class of monomial algebras whose language of non-zero monomials is prolongable, namely, every non-zero monomial can be extended to a longer non-zero monomial; this direction was proposed to us by Zelmanov. The following construction shows that this is impossible in general. The growth of a hereditary formal language is given by counting the number of its words of length at most $n$.

\begin{thm} \label{Prolongable}
There exists a hereditary language whose growth is not equivalent to the growth of any prolongable hereditary language.
\end{thm}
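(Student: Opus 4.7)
The plan is to exploit a convexity property of growth functions of prolongable hereditary languages, and to construct a growth function that violates this property persistently under $\sim$-equivalence. If $L'$ is prolongable and hereditary, every length-$n$ word admits a right-extension to a distinct length-$(n+1)$ word; hence $a_n := \gamma_{L'}(n) - \gamma_{L'}(n-1)$ is non-decreasing. This yields $\gamma_{L'}(2n) - \gamma_{L'}(n) \geq n \cdot a_{n+1} \geq \frac{n}{n+1}\gamma_{L'}(n)$, so $\gamma_{L'}(2n) \geq (2-o(1))\gamma_{L'}(n)$. Chasing the $\sim$-equivalence inequalities $f(n) \leq \gamma_{L'}(C_1 n)$ and $\gamma_{L'}(n) \leq f(C_2 n)$, one obtains that any $f \sim \gamma_{L'}$ satisfies
\[
  f(N) \geq (2-o(1))\, f(N/K) \qquad (N \to \infty),
\]
where $K := 2 C_1 C_2$.

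The task is thus to build a growth function $f$ with $\sim$-invariant plateaus on which this doubling inequality fails for every constant $K$. I will choose sequences $A_k < B_k$ with $B_k/A_k \to \infty$, so that once $B_k/A_k > K$ the point $N := B_k$ has $N/K$ sitting in the same plateau $[A_k, B_k]$. If $f(A_k)$ and $f(B_k)$ are comparable up to a constant strictly less than $2$, the ratio $f(N)/f(N/K)$ is bounded by that constant, contradicting the bound above. Concretely, set $V_k := B_k^2$ and $f(A_k) := V_k$, prescribe $f'(n) = n+1$ on each plateau (so $f(B_k) \approx \tfrac{3}{2} V_k$, giving plateau ratio $\tfrac{3}{2} < 2$), and interpolate linearly on each transition $[B_k, A_{k+1}]$ up to $f(A_{k+1}) = V_{k+1}$.

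Finally, to realize $f$ as the growth of a hereditary language, I invoke Bell--Zelmanov's theorem (Theorem~1.1 above): it suffices to arrange $f'(n) \geq n+1$ and $f'(m) \leq f'(n)^2$ for $m \in [n, 2n]$. Both hold trivially on plateaus. On transitions, condition~(2) caps the slope, forcing the transition length to satisfy $A_{k+1} - B_k \gtrsim B_{k+1}^2/B_k^2$; submultiplicativity caps $V_{k+1} \leq V_k^2$, i.e.\ $B_{k+1} \leq B_k^2$. The main obstacle is to choose $(B_k)$ so that these three competing constraints coexist with $B_{k+1}/A_{k+1} \to \infty$. The sweet spot is $B_{k+1} := B_k^{3/2}$ (doubly exponentially growing $B_k$, strictly below the submultiplicativity bound); then $B_{k+1}/A_{k+1} \asymp B_k^{1/2} \to \infty$, so plateaus persist in the $\sim$-invariant sense. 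Bell--Zelmanov then delivers a hereditary language $L$ with $\gamma_L \sim f$, and by the derivation above $\gamma_L$ cannot be $\sim$-equivalent to the growth of any prolongable hereditary language.
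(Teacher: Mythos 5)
Your proposal is correct in outline and reaches the theorem by a genuinely different route from the paper. The paper first isolates a quantitative two-parameter inequality (``Condition (II)'') satisfied by every function equivalent to an increasing function with non-decreasing discrete derivative, and then builds, via Bell--Zelmanov, a growth function whose derivative explodes doubly exponentially (of order $2^{2^{n_{i+1}/2}}$) immediately after long segments where $f'(k)=k+1$, so that Condition (II) fails. You instead extract only the crudest consequence of the non-decreasing derivative --- the doubling inequality $\gamma(2n)\geq (2-o(1))\gamma(n)$ --- show it transfers through $\sim$-equivalence to $f(N)\geq (2-o(1))f(\lfloor N/K\rfloor)$ for a fixed $K$, and defeat it with plateaus $[A_k,B_k]$, $B_k/A_k\to\infty$, on which $f$ gains only a factor $\tfrac{3}{2}$; the arithmetic with $B_{k+1}=B_k^{3/2}$, $V_k=B_k^2$ and transition length $O(B_k)$ checks out. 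Your obstruction is more elementary and your construction more transparent, but it is also strictly weaker: Condition (II) is engineered so that the same constructed function also proves Theorem \ref{Graded}, where the semiprime graded case has no non-decreasing-derivative statement available and must be handled by the injectivity of $x\mapsto\sum_{u}u\otimes ux$; your doubling inequality would not reach that case. Two details to tighten. First, ``every length-$n$ word admits a right-extension'' uses the two-sided reading of prolongability (the paper adopts the same reading in deriving that the complexity function is non-decreasing). Second, literal linear interpolation on the transition is inconsistent with condition (2) of Bell--Zelmanov near its left end: for $m\in(B_k,2B_k]$ the window $[m/2,m]$ still meets the plateau, so the slope there must be capped by roughly $(m/2)^2$ rather than by a constant $\approx B_k^2/3$; capping it at $\lfloor (m/2)^2\rfloor$ on $(B_k,2B_k]$ and making up the remaining gain on $(2B_k,4B_k]$ (where the window lies entirely in the transition and the cap is of order $B_k^4$) fixes this while keeping $A_{k+1}=O(B_k)$, hence $B_{k+1}/A_{k+1}\to\infty$ as you need.
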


Equivalently, there exists a monomial algebra whose growth function is not equivalent to the growth of any monomial algebra without singular monomials.
Hence the phenomenon that the monomial algebras constructed by Bell and Zelmanov \cite{BellZelmanov} contain many singular monomials is, in fact, inevitable.

Therefore, one may ask which growth functions of algebras are realizable within algebras having a deeper algebraic structure, or a more natural mathematical origin rather than a pathological construction. By analogy from geometric group theory, while groups of intermediate growth were known for decades, only a few years ago a simple group of intermediate growth was constructed \cite{Nekrashevych simple}; simple algebras (and simple Lie algebras) of prescribed intermediate growth were constructed by the author in \cite{BGProc,BGJalg}. Within the class of monomial algebras, important algebraic properties (prolongability, primeness, just-infiniteness) reflect dynamical properties of an underlying subshift from which the algebra originates.

A ring is prime if the product of non-zero ideals is non-zero; this is a non-commutative extension of being an integral domain. In particular, prime rings contain no non-zero nilpotent ideals. A ring is primitive if it acts faithfully on an irreducible module. This is an important representation-theoretic notion, which implies primeness. For a comprehensive survey of primitive rings, see \cite{BellPrimitive}.

\begin{conj}[{Alahmadi-Alsulami-Jain-Zelmlanov, 2017 \cite{AAJZ_announcement}; Zelmanov, 2017 \cite{Spa}}] \label{ZelmanovsConjecture}
The following are equivalent for a function $f\colon\mathbb{N}~\rightarrow~\mathbb{N}$:
\begin{itemize}
    \item $f$ is equivalent to the growth of a finitely generated algebra;
    \item $f$ is equivalent to the growth of a finitely generated primitive algebra;
    \item $f$ is equivalent to the growth of a finitely generated nil algebra.
\end{itemize}
\end{conj}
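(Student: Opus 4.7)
The conjecture asserts that three classes of growth functions coincide: those of general finitely generated algebras, of finitely generated primitive algebras, and of finitely generated nil algebras. Since the latter two classes are trivially contained in the first, the content of the conjecture lies in the two non-trivial implications: every growth function of an f.g.\ algebra must be realizable both as the growth of a primitive algebra and as the growth of a nil algebra. I would approach these implications separately, since the techniques involved are of very different flavors.

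For the \emph{primitive} direction, my plan would be to take an algebra $A$ realizing a prescribed growth $f$ (for instance, via the Bell--Zelmanov monomial realization) and perform a \emph{primitization} step, enlarging $A$ to an algebra admitting a faithful irreducible representation. Tools developed by Bell and Bell--Small on primitivity via generic actions on faithful modules are natural inputs here. The key point to verify is that primitization does not inflate the growth beyond an asymptotic equivalence: since such constructions typically adjoin only $O(1)$ generators and introduce controlled relations, one expects the resulting algebra to retain growth $\sim f$.

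For the \emph{nil} direction the situation is considerably more delicate. Nil algebras cannot contain free subalgebras, and their combinatorial structure is tightly constrained. One natural plan is to adapt the Smoktunowicz--Bartholdi construction (or the Bell--Zelmanov refinement) to force nilpotency of every element via additional monomial relations, while preserving the combinatorial growth backbone. Alternatively, in view of the present paper's discovery of ``holes'' in the space of growth functions (Theorem~\ref{main thm submul}) and of the refutation of the graded analogue of this conjecture, one might instead expect the ungraded conjecture to fail on the nil side, and attempt to construct a candidate function $f$ realizable as the growth of some f.g.\ algebra (for instance, one whose combinatorics enforce a large commutative quotient or a free-like subshift) that is provably incompatible with any nil realization.

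The main obstacle, in my view, is the nil direction. The question of how flexibly nil algebras can grow brushes against the K\"othe and Kurosh circle of problems and against deep open questions on nil algebras of intermediate growth. Producing a genuine structural obstruction that certifies an explicit $f$ cannot be the growth of any nil algebra would require a new growth-theoretic constraint on nil algebras beyond Bergman's gap; this, rather than the primitive half, seems to be the crux of the problem.
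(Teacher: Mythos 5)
The statement you were asked about is a \emph{conjecture} (due to Alahmadi--Alsulami--Jain--Zelmanov and Zelmanov), and the paper does not prove it; it remains open. The paper only offers two partial results orbiting it: Proposition~\ref{primitive_proposition}, which realizes any increasing submultiplicative $f$ as the growth of a primitive algebra up to a \emph{quadratic} error term ($f(n)\preceq\gamma_A(n)\preceq n^2f(n)$), and Theorem~\ref{Graded}, which refutes a graded analogue by exhibiting growth functions not attainable by any graded algebra without nonzero nilpotent ideals. Your proposal is accordingly a research program rather than a proof, and you are right to frame it that way; but two points deserve correction.

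First, on the primitive direction: your key step is the assertion that a ``primitization'' of a Bell--Zelmanov realization ``does not inflate the growth beyond an asymptotic equivalence'' because it adjoins $O(1)$ generators and controlled relations. This is exactly the part that nobody knows how to do, and it is why the paper's Proposition~\ref{primitive_proposition} carries an $n^2$ error term rather than an equivalence. The paper's actual route is quite different from generic primitization machinery: it builds a \emph{prime monomial} algebra with growth between $f(n)$ and $n^2f(n)$ (Proposition~\ref{prime quadratic}), passes to the associated irreducible subshift $X$ with complexity $f\preceq p_X\preceq nf$, and then takes the convolution algebra of the groupoid $\mathbb{Z}\times X$, which is primitive by Steinberg's criterion and has growth $\sim np_X(n)$. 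The quadratic loss is baked into this pipeline (one factor of $n$ from passing between $f$ and $p_X$, another from $np_X(n)$), and removing it is an open problem, not a routine verification. Adjoining generators to force primitivity while provably preserving the growth class is not known to be possible in general.

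Second, on the nil direction: you correctly identify it as the crux and offer no argument, which is honest, but your suggested fallback --- that the ungraded conjecture might fail on the nil side --- is also not established by anything in the paper. Theorem~\ref{Graded} gives an obstruction only for \emph{graded} algebras (via Condition (II) on discrete derivatives, which exploits the injectivity of the map $a\mapsto\sum u\otimes ua$ for graded semiprime algebras); the argument does not descend to the ungraded setting, since the associated-graded of a prime or nil algebra need not inherit the relevant structure. So neither half of the conjecture is settled by your plan or by the paper, and you should present your text as a discussion of the state of the problem rather than as a proof attempt.
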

(Excluding linear functions, since by \cite{SmallStaffordWarfield} algebras of linear growth satisfy a polynomial identity and are therefore neither primitive nor nil \cite{Kaplansky}; an algebra is nil if all of its elements are nilpotent.)
Let us concentrate in primitive algebras. The above conjecture has a partial positive solution up to a quadratic error term:
\begin{prop} \label{primitive_proposition}
Let $f\colon \mathbb{N}\rightarrow \mathbb{N}$ be an increasing, submultiplicative function. Then there exists a finitely generated primitive algebra $A$ such that $f(n)\preceq \gamma_A(n)\preceq ~ n^2f(n)$.
\end{prop}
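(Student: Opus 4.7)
The strategy is to combine a realization of $f$ as the growth of some algebra with a \emph{primitive extension} that inflates the growth by at most a polynomial factor of degree two.

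First, I would apply \cite[Theorem~C]{SmoktunowiczBartholdi} or, more convenient for our error bound, Proposition~\ref{linear error term} (which introduces only a linear error term), to obtain a finitely generated algebra $B$ with $\gamma_B(n)\sim f(n)$. The algebra $B$ produced this way need not be primitive -- indeed, as noted in the excerpt, the Bell--Zelmanov-type constructions typically yield monomial algebras with a huge nil radical.

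Next, I would construct a finitely generated primitive algebra $A$ from $B$, satisfying $\gamma_A(n)\preceq n^2\gamma_B(n)$. A natural candidate is a Jacobson-density construction: fix a faithful left $B$-module $V$ of countable dimension, embed $B$ into $\mathrm{End}_k(V)$, and adjoin a small fixed number of auxiliary endomorphisms of $V$ (for instance, a rank-one operator $e$ and, if needed, a shift-type operator) to form the subalgebra $A\subseteq \mathrm{End}_k(V)$ generated by the image of $B$ together with these auxiliary operators. The auxiliary operators are selected so that $A$ acts densely on $V$; by the Jacobson density theorem, $V$ is then a faithful simple $A$-module and $A$ is primitive. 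The growth estimate rests on reducing every word in the generators of $A$ to a canonical form using the algebraic relations imposed by the auxiliary operators (for example, $e b e=\varphi(b)e$ for rank-one $e$ and a linear functional $\varphi$ on $B$), so that words with multiple occurrences of $e$ collapse. After normalization, every length-$n$ element lies in one of $O(n^2)$ combinatorial families of the shape $b_0\cdot(\text{aux})\cdot b_1$, and submultiplicativity of $\gamma_B$ then bounds the total dimension by $n^2f(n)$.

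The delicate point -- and the main obstacle -- is to balance two opposing demands: the auxiliary endomorphisms must be \emph{rich enough} for the action of $A$ on $V$ to be dense (so that $A$ is primitive), yet satisfy \emph{sufficiently rigid} algebraic relations with the image of $B$ to prevent an exponential blow-up in growth. This trade-off dictates the precise choice of the faithful module $V$ and of the auxiliary operators; in particular, one may need to take $V$ itself with controlled (polynomial) dimension growth along a chosen cyclic vector, so that the contribution of the auxiliary operators stays confined to the prescribed $n^2$ factor. Navigating this trade-off -- ideally by leveraging the explicit monomial structure of $B$ furnished in Stage~1 -- is the technical heart of the construction.
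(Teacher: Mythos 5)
There is a genuine gap: your Stage~2 is a program, not a proof. You yourself flag the ``trade-off'' between density and growth control as ``the technical heart of the construction,'' and that heart is exactly what is missing. Moreover, the specific mechanism you suggest is in real danger of failing. If $e=v\otimes\varphi$ is a rank-one operator on a faithful $B$-module $V$, then the span of the words $b_1eb_2$ with $b_1,b_2\in B^{\leq n}$ is $(B^{\leq n}v)\otimes(\varphi B^{\leq n})$, of dimension $\dim_F(B^{\leq n}v)\cdot\dim_F(\varphi B^{\leq n})$. For the action to be dense (hence $A$ primitive) one cannot make both factors degenerate, and generically this product is of order $\gamma_B(n)^2\sim f(n)^2$, which vastly exceeds the target $n^2f(n)$. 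So the collapse into ``$O(n^2)$ combinatorial families of the shape $b_0\cdot(\text{aux})\cdot b_1$'' does not by itself give the bound: each family can itself have dimension $\gamma_B(n)^2$. Arranging $\dim(\varphi B^{\leq n})=O(n)$ while keeping $V$ faithful and the action dense is precisely the unproved step. There is also a secondary issue: starting from an arbitrary realization $B$ of $f$ (e.g.\ via Proposition~\ref{linear error term}) gives an algebra with a huge prime radical, and controlling a faithful module of such an algebra along a cyclic vector is itself as hard as the original problem.

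For comparison, the paper routes around this entirely differently, and the order of operations matters. It first spends the $n^2$ factor on realizing $f$ by a \emph{prime monomial} algebra $A_{\mathcal{S}}$ (Proposition~\ref{prime quadratic}), built so that every factor of the defining infinite word recurs; primeness/recurrence must be engineered into the combinatorial construction from the start and cannot be retrofitted onto the output of Proposition~\ref{linear error term}. It then passes \emph{growth-preservingly} to a primitive algebra: the recurrent word yields an irreducible subshift $X$, and the convolution algebra $F[\mathfrak{G}_X]$ of the associated groupoid is primitive by Steinberg's theorem (the groupoid is ample, effective, with a dense orbit), with $\gamma_{F[\mathfrak{G}_X]}(n)\sim np_X(n)\sim P_X(n)\sim\gamma_{A_{\mathcal{S}}}(n)$ --- this is arrow $(vi)$ of the hierarchy in Section~\ref{sec:roadmap}. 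So the primitivization step costs nothing, whereas in your plan it is the step that must absorb the entire $n^2$, and that is where the argument is incomplete.
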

(This was done up to a weaker error term by the author in \cite[Section~3]{BGIsr}).
Surprisingly, there is also a partial \textit{negative} answer, if we restrict to graded algebras\footnote{Recall that the growth of any algebra is equivalent to the growth of a graded algebra; it is possible, however, that a function is equivalent to the growth of a primitive algebra, but not equivalent to the growth of any primitive graded algebra.}.

% It should be mentioned that the above constructions were modified by the author in \cite{BGJalg, BGIsr} to construct prime, primitive and simple algebra of prescribed growth rates; these also yield the existence of finitely generated simple Lie algebras with arbitrary growth functions satisfying the conditions of the above corollary (see \cite{BGProc}).

\begin{thm} \label{Graded}
There exists a finitely generated algebra $A$ such that:
\begin{enumerate}
    \item $\gamma_A(n)$ is not equivalent to the growth of any graded algebra without non-zero nilpotent ideals. In particular, it is not equivalent to the growth of any prime graded algebra.
    \item $\gamma_A(n)$ is not equivalent to the growth of any graded algebra with a regular homomgeneous element.
\end{enumerate}
\end{thm}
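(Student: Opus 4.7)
The plan is to produce a single finitely generated algebra $A$ whose growth function $\gamma_A$ displays an irregularity incompatible with the structural rigidity imposed on any graded algebra $B$ by either of the hypotheses in (1) or (2).

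The starting point is a monotonicity property of the Hilbert function. For (2), if $a\in B_d$ is a regular homogeneous element, then left multiplication by $a$ injects $B_n\hookrightarrow B_{n+d}$, so $h^B_{n+d}\ge h^B_n$ for every $n$; once $h^B_{n_0}\ge H$, we therefore have $\gamma_B(m+d)-\gamma_B(m)\ge H$ for all $m\ge n_0$. For (1), the identity $(BaB)^k\neq 0$ for every non-zero homogeneous $a$ in a graded algebra without non-zero nilpotent ideals gives, after passing to a homogeneous generator of minimal positive degree, an arithmetic progression of degrees along which $h^B$ is bounded away from zero, yielding the weaker but still persistent bound $\gamma_B(m+D)-\gamma_B(m)\ge 1$ for some fixed $D$ and $m$ large.

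The algebra $A$ is then constructed by elaborating the monomial-algebra toolkit underlying Theorem \ref{main thm submul} and the constructions of \cite{BellZelmanov,SmoktunowiczBartholdi}: one encodes a Hilbert function that alternates between arbitrarily high \emph{spikes} at sparse positions $n_k\to\infty$, where $\gamma_A$ jumps by a prescribed height $H_k$, and subsequent arbitrarily long \emph{troughs} $[n_k,m_k]$ with $m_k-n_k\gg H_k$ on which $\gamma_A$ grows at essentially unit speed. The delicate point is to maintain submultiplicativity across all alternations, which one handles by placing the spikes very sparsely and tuning their heights to the preceding trough lengths.

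Now assume some graded $B$ satisfying (1) or (2) has $\gamma_B\sim\gamma_A$ with constant $C$. The spike of $\gamma_A$ at $n_k$ forces, via $\gamma_A(n)\le\gamma_B(Cn)$, a large accumulated growth of $\gamma_B$ across a rescaled window around $n_k$, hence some $h^B_{n'_k}\ge H'_k\to\infty$ for case (2); in case (1) one only needs to reach the progression on which $h^B_{\cdot}\ge 1$. The monotonicity of the first paragraph then forces $\gamma_B$ to grow by at least $H'_k$ per $d$ steps (respectively by $1$ per $D$ steps) throughout the subsequent trough, and summing this lower bound across $[n_k/C, C m_k]$ eventually overruns the ceiling $\gamma_B(n)\le\gamma_A(Cn)$ imposed by the slow-growth profile of $\gamma_A$ on $[n_k,m_k]$. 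Taking $k$ large enough that $m_k-n_k$ dwarfs $Cd$ and all fixed constants attached to $B$, this collision yields the desired contradiction in both cases.

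The principal obstacle is the construction step: fabricating the prescribed spike-and-trough Hilbert function as the genuine growth of a finitely generated algebra while maintaining submultiplicativity across arbitrarily many alternations requires a careful adaptation of the Bell--Zelmanov monomial construction. A secondary point, specific to (1), is upgrading the mere existence of a progression with $h^B_{\cdot}\ge 1$ into a contradiction strong enough to trap \emph{all} graded algebras without non-zero nilpotent ideals simultaneously; one handles this by arranging the trough lengths $m_k-n_k$ to grow faster than any fixed $D$ that could arise from such a $B$.
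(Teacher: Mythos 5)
Your treatment of part (2) is essentially the paper's: a regular homogeneous element forces the Hilbert function to be non-decreasing (after passing to a suitable Veronese subring), and this is incompatible, up to equivalence, with a growth function whose derivative spikes and then remains comparatively small over a long subsequent window. The paper packages this as ``Condition (II)'' (Proposition \ref{preliminary_discrete_derivative}) precisely because monotonicity of $\gamma'$ is not preserved under $\sim$, but your direct rescaling argument is the same idea.

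Part (1), however, has a genuine gap. From the absence of non-zero nilpotent ideals you extract only that $h^B$ is bounded away from zero along an arithmetic progression, i.e.\ $\gamma_B(m+D)-\gamma_B(m)\geq 1$ for some fixed $D$ and all large $m$. This holds for \emph{every} infinite-dimensional finitely generated graded algebra (if $B$ is generated in degrees $\le p$ and $B_r=\cdots=B_{r+p-1}=0$, then $B_i=0$ for all $i\ge r$, cf.\ Lemma \ref{gen_gr}), and since every growth function of an algebra is equivalent to that of a graded monomial algebra (Subsection \ref{subsec:algebras}), no function whatsoever can contradict such a bound; your plan of ``arranging the trough lengths to grow faster than any fixed $D$'' therefore cannot close the argument. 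What is needed is a consequence of semiprimeness that constrains the \emph{shape} of $\gamma_B$. The paper's Proposition \ref{preliminary_prime} provides one: for homogeneous $x$, if $B_ix=0$ for all $i$ in a window of suitable length then $x$ generates a nilpotent ideal, so the map $x\mapsto\sum_u u\otimes ux$ (sum over a homogeneous basis of that window) is injective; this yields $\gamma(m+t)-\gamma(m)\le\left(\gamma(N+s)-\gamma(N)\right)\left(\gamma(N+m+t+s)-\gamma(N+m)\right)$, a submultiplicativity of \emph{differences} in which both right-hand factors are taken over windows starting after $N$ --- i.e.\ after the spike --- and that is exactly what the constructed function violates. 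A secondary issue concerns your construction: a derivative that jumps to height $H_k$ at an isolated position and is linear (let alone ``unit speed'') before and after fails the necessary condition $f'(m)\le f'(n)^2$ for $n\le m\le 2n$ from \cite[Theorem~1.1]{BellZelmanov}, so the spike must be approached by a gradual ramp (the paper uses a repeated-squaring regime for $f'$); sparseness of the spikes alone does not resolve this.
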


In particular, there exist infinite-dimensional monomial algebras all of whose graded deformations contain non-zero nilpotent ideals.
If $A$ is a graded algebra then $A[t]$ obviously has a regular homogeneous element, so up to a linear error term, all growth functions of algebras are equivalent to growth functions of graded algebras with regular homogeneous elements.
It should be mentioned that there exist graded prime algebras without regular homogeneous elements, and graded algebras with regular homogeneous elements which are not prime.

% At the level of formal languages, we prove the following:

% Toward the end, we examine the parallel situation for modules. Two results, discovered by Zelmanov \cite{Zelmanov private communication}, are given (with different, shorter proofs). First, somewhat surprisingly, an extremely simple characterization of finitely generated (graded, infinite-dimensional) \textit{modules} exists: these are exactly increasing, exponentially bounded functions (Theorem \ref{module}). Second, finitely generated graded modules with regular homogeneous elements have non-decreasing discrete derivatives. Conversely, any increasing, exponentially bounded function with non-decreasing discrete derivative is equivalent to a growth of a graded module with a regular homogeneous element (Theorem \ref{regular module}). Refuting the hope to get a similar result for graded algebras, we give an example of an increasing, exponentially bounded `smooth' function (in the sense that all of its iterated discrete derivatives are eventually increasing), which is not equivalent to the growth of any finitely generated algebra. This hints that a submultiplicativity-type condition must be taken as an a-priori assumption in any characterization of growth functions.

We start with an introductory section (Section \ref{sec:roadmap}), illustrating the hierarchy of the main classes of growth functions of algebras, languages and subshifts, followed by a short section (Section \ref{sec:realizing}) presenting positive realization results which emphasize the significance of our main results.

\section{A roadmap of growth: algebra, combinatorics and dynamics} \label{sec:roadmap}

\subsection{Algebras} \label{subsec:algebras} All algebras in this paper are associative and finitely generated over an arbitrary base field $F$. Let $A$ be an $F$-algebra, generated by a finite-dimensional subspace $V$. We say that $A$ is \textit{graded} if it decomposes into a direct sum of homogeneous components $A=\bigoplus_{n=0}^{\infty} A_n$ such that $A_nA_m\subseteq A_{n+m}$. Throughout the paper, graded algebras are assumed to have finite-dimensional homogeneous components.
An algebra is \textit{monomial} if it can be presented as a quotient of a free algebra modulo monomial relations; a monomial algebra is graded by any assignment of (positive integer) degrees to its generators in the monomial presentation.
% An algebra is \textit{prime} if the product of two non-zero ideals is non-zero; in particular, prime algebras do not contain non-zero nilpotent ideals.
Monomial algebras are prime if and only if for any two non-zero monomials $u,v$ there exists a monomial $w$ such that $uwv$ is non-zero. For more on monomial algebras, see \cite{BBL1997}.
% An algebra is called \textit{primitive} if it faithfully acts on an irreducible module. This representation theoretic property is of great importance from the point of view of structure theory of rings. Any primitive algebra is prime.

Let $A=F\left<x_1,\dots,x_d\right>/I$ be an arbitrary algebra. Consider the ideal $\i(I)\triangleleft F\left<x_1,\dots,x_d\right>$ generated by the initial monomials of elements in $I$, with respect to the order $x_1<\cdots<x_d$. Then the growth of $A$ is equal to the growth of $F\left<x_1,\dots,x_d\right>/\i(I)$, the latter being a monomial (hence graded) algebra. Therefore, while the class of graded algebras is very restricted compared to the whole class of algebras, the classes of growth functions of algebras, graded algebras and monomial algebras coincide. We remark that $F\left<x_1,\dots,x_d\right>/\i(I)$ might fail to be prime even when $F\left<x_1,\dots,x_d\right>/I$ is. The classical comprehensive reference for growth of algebras is \cite{KrauseLenagan}.

\subsection{Languages and subshifts} \label{subsec:languages and subshifts}

Let $\Sigma=\{x_1,\dots,x_d\}$ be a finite alphabet. A language over $\Sigma$ is a subset $\mathcal{L}\subseteq \Sigma^*=\bigcup_{i=1}^{\infty} \Sigma^i$, and its elements are called words.
Following \cite{BellZelmanov}, the growth function of $\mathcal{L}$ is $\gamma_{\mathcal{L}}(n)=\#\{\text{length $\leq n$ words in $\mathcal{L}$}\}$.
A language is \textit{hereditary} if it is closed under taking subwords. The set of non-zero monomials in a finitely generated algebra forms a hereditary language, and conversely, any hereditary language gives rise to a monomial algebra spanned by its words: $F\left<x_1,\dots,x_d\right>/\left<w\ \text{monomial}\ |\ w\notin \mathcal{L}\right>$.
Hence the class of growth functions of algebras coincides with the class of growth functions of hereditary languages.
We say that a language $\mathcal{L}$ is \textit{prolongable} if for every $w\in \mathcal{L}$ there exist $u,v$ such that $uwv\in\mathcal{L}$.

\subsubsection{Subshifts}

The set $\Sigma^{\mathbb{Z}}$, endowed with the product topology, is a dynamical system with respect to the shift operator $T$. A subshift is a closed, shift-invariant subset $X\subseteq \Sigma^{\mathbb{Z}}$.

With any subshift $X\subseteq \Sigma^{\mathbb{Z}}$ a hereditary language $\mathcal{L}(X)$ over $\Sigma$ is associated, namely, the language of its finite factors. % This language is porlongable, and any prolongable language arises in this way from an underlying subshift.
The \textit{factor complexity} of $X$ is the function $p_X(n)=\#\{\text{length-$n$ factors of $X$}\}$. To align with the framework of hereditary languages, let us define the \textit{growth function} of $X$ to be $P_X(n)=\#\{\text{length $\leq n$ factors of } X\}$; thus $P_X(n)=\gamma_{\mathcal{L}(X)}(n)$.
We call a subshift $X$ \textit{transitive} if it has a dense orbit\footnote{In the more general setting of dynamical systems, transitivity may refer to having any non-empty open subset intersect any other non-empty open subset, after a suitable shift.}: $X=\overline{\{T^iw\}}_{i\in\mathbb{Z}}$ for some (bi-)infinite word $w\in \Sigma^{\mathbb{Z}}$. In this case we refer to the complexity (resp.~growth) function of the subshift as of the bi-infinite word $w$, denoted $p_X(n)=p_w(n)$ (resp.~$P_X(n)=P_w(n)$). For a survey on complexity functions of infinite words, see \cite{complexity_infinite_words}.

A subshift $X$ is called a \textit{Cantor set} if it is homeomorphic to the Cantor set, which means that $X$ has no isolated
points.
%%%%% FALSE, PROBABLY %%%%%
% A transitive subshift with no isolated points is the closure of the orbit of an infinite %%%%% aperiodic; then say the latter property

An infinite word in which every factor appears infinitely many times is called \textit{recurrent}, and a subshift which has a dense orbit of a recurrent word is \textit{irreducible}: alternatively, for any two factors $u,v\in \mathcal{L}(X)$ there exists $w$ such that $uwv\in\mathcal{L}(X)$.
A transitive aperiodic subshift has no isolated points if and only if it is irreducible.

\subsubsection{Groupoids and convolution algebras}
Corresponding to any (aperiodic) %%%CHANGED%%% 
subshift $X\subseteq \Sigma^{\mathbb{Z}}$ one can endow the topological space $\mathbb{Z}\times X$ with a (topological) groupoid structure, denoted $\mathfrak{G}_X$, where partial multiplication is given by:
$$ (n,T^m u)\cdot (m,u)=(n+m,T^{n+m} u) $$
% (this coincides with the groupoid of germs of the action $\mathbb{Z}\curvearrowright X$.)
Units are elements of the form $gg^{-1}$, so in the above case units are of the form $(0,x)$ for $x\in X$.
The \textit{convolution algebra} over a (discrete) field $F$ of $\mathfrak{G}_X$ is the ring of compactly supported continuous functions $\varphi\colon\mathfrak{G}_X\rightarrow F$ equipped with the pointwise addition and convolution operation. For more on convolution algebras of groupoids associated with subshifts, see \cite{Nekrashevych}.

\subsubsection{Equivalence of growth functions of languages and subshifts}
The equivalence relation of growth functions $f\sim g$, which is natural in the context of geometric group theory and growth of algebras (where one is not interested in specifying a generating set) has the following interpretation for languages, and hence for subshifts. 

Let $\mathcal{L}$ be a subshift over an alphabet $\Sigma=~\{x_1,\dots,x_d\}$. To each letter one can assign a `weight': $w(x_1),\dots,w(x_d)\in~\mathbb{R}_{>0}$. Accordingly, one can define the weighted growth, measuring the number of words of total weight $\leq n$ in $\mathcal{L}$. Then the various weighted complexity functions $P_{\mathcal{L},w}(n)$ resulting from any such weighting might be distinct, but are always equivalent: $\gamma_{\mathcal{L},w}(n)\sim \gamma_{\mathcal{L}}(n)$.
Similarly, for a subshift $X\subseteq \Sigma^{\mathbb{Z}}$, all weighted growth functions are equivalent to its standard growth function.
% one can study weighted growth functions...

% over an alphabet $\Sigma=~\{x_1,\dots,x_d\}$, then one can assign `weights' to the letters: $w(x_1),\dots,w(x_d)\in~\mathbb{R}_{>0}$. Accordingly, one can define the weighted growth, measuring the number of factors of weight $\leq n$. Then the various weighted complexity functions resulting from any such weight might be distinct, but always equivalent.

\subsubsection{Subshifts correspond to hereditary prolongable languages} \label{subsubsec:subshifts correspond to hereditary prolongable languages}

The set of all factors of a given subshift forms a hereditary prolongable language; conversely, let $\mathcal{L}$ be a hereditary prolongable language over an alphabet $\Sigma=\{x_1,\dots,x_d\}$. Enumerate its words: $w_1,w_2,\dots$ and notice that since $\mathcal{L}$ is prolongable we can find bi-infinite words $W_1,W_2,\dots$ whose factors belong to $\mathcal{L}$ and $w_i$ factors $W_i$ for each $i\in\mathbb{N}$. Then the closure of the shift-orbits of $\{W_1,W_2,\dots\}$ in $\Sigma^{\mathbb{Z}}$ forms a subshift whose set of factors is exactly $\mathcal{L}$.
Therefore, the class of growth functions of subshifts coincides with the class of growth functions of hereditary prolongable languages.

\subsection{Discrete derivatives}

Let $f\colon\mathbb{N}\rightarrow\mathbb{N}$ be a function. Its \textit{discrete derivative} is the function $f'\colon\mathbb{N}\rightarrow\mathbb{N}$ given by $f'(n)=f(n)-f(n-1)$ (we set $f'(1)=f(1)$).

\subsubsection{Graded algebras} \label{subsubsec:graded algebras} Let $A$ be an algebra. If $A=\bigoplus_{n=0}^{\infty} A_n$ is (connected) graded, generated in degree $1$ then the growth function corresponding to the generating subspace $F+A_1$ is $\gamma_A(n)=\dim_F \bigoplus_{i=0}^{n} A_i$ and $\gamma_A'(n)=\dim_F A_n$.

Suppose that $A$ has a regular element (namely, a non zero-divisor) which is homogeneous, say, $a\in A_k$. Notice that since $A$ is generated in degree $1$ then $A$ is a finitely generated module over the Veronese subring $\bigoplus_{i=0}^{\infty} A_{kn}$, which is again generated in degree $1$ with respect to a modified grading ($\widetilde{\deg}(r)=\frac{1}{k}\deg(r)$). Since an algebra shares its growth with any subalgebra over which it is a finitely generated module, we may therefore assume that $a$ has degree $1$.
Then $\dim_F A_n=\dim_F aA_n\leq \dim_F A_{n+1}$, so $\gamma_A'(n)$ is non-decreasing, or equivalently $\gamma_A''(n)\geq 0$. Notice that having a non-decreasing discrete derivative is not invariant under equivalence, namely, it is possible that growth functions of the same algebra with respect to distinct generating subspaces will not share this property.

As observed by Bell and Zelmanov \cite{BellZelmanov}, the growth function of any finitely generated algebra with respect to any generating subspace has a submultiplicative discrete derivative. However, a function might be equivalent to a growth function of an algebra, yet have a non-submultiplicative discrete derivative.

\subsubsection{Languages} Let $\mathcal{L}$ be a hereditary language over an alphabet $\Sigma=\{x_1,\dots,x_d\}$. Since the (standard) growth function $\gamma_{\mathfrak{L}}(n)$ counts words of length $\leq n$, its discrete derivative $\gamma_{\mathfrak{L}}'(n)$ counts words of length $n$. If moreover $\mathfrak{L}$ is prolongable then $\gamma_{\mathfrak{L}}'(n)$ naturally coincides with the complexity function of the underlying subshift.
Since any word of length $n$ can be extended to the right by some letter, and it can be recovered from this extension, we get that the complexity function is non-decreasing. In other words, $\gamma_{\mathcal{L}}''(n)\geq 0$. % Consequently, the growth function of any subshift has a non-negative second discrete derivative.
We make the following observation, which we will freely use in the sequel.

\begin{lem} \label{f'' and nf(n)}
Let $f\colon\mathbb{N}\rightarrow\mathbb{N}$. If $f''\geq 0$ then $f(n)\sim nf'(n)$.
\end{lem}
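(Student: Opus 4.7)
The plan is to unpack the definitions and use telescoping. The hypothesis $f''\geq 0$ means $f'(n)-f'(n-1)\geq 0$, i.e.\ the discrete derivative $f'$ is non-decreasing. Also, since $f'(i)=f(i)-f(i-1)$ (with $f'(1)=f(1)$), telescoping yields the identity
\[
f(n)=\sum_{i=1}^{n} f'(i).
\]

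For the upper bound $f\preceq nf'$, I would simply observe that since $f'$ is non-decreasing, every term in the sum is at most $f'(n)$, so
\[
f(n)=\sum_{i=1}^{n} f'(i)\ \leq\ n\,f'(n).
\]
This already gives $f(n)\preceq nf'(n)$ with constant $C=1$.

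For the reverse inequality $nf'\preceq f$, I would double the argument: estimating $f(2n)$ from below by keeping only the top half of the summands,
\[
f(2n)=\sum_{i=1}^{2n} f'(i)\ \geq\ \sum_{i=n+1}^{2n} f'(i)\ \geq\ n\,f'(n+1)\ \geq\ n\,f'(n),
\]
again using monotonicity of $f'$. Thus $nf'(n)\leq f(2n)$, which witnesses $nf'\preceq f$ with $C=2$. Combining the two bounds yields $f(n)\sim nf'(n)$. There is no real obstacle here; the only subtlety is the choice of the constant $C=2$ in the second inequality (the bound $nf'(n)\leq f(n)$ itself is false in general, which is precisely why one needs the $\sim$ equivalence rather than pointwise equality).
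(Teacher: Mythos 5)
Your proof is correct and is essentially identical to the paper's: the same telescoping identity $f(n)=\sum_{i=1}^{n}f'(i)$, the same monotonicity bound $f(n)\leq nf'(n)$, and the same lower bound $nf'(n)\leq \sum_{i=n+1}^{2n}f'(i)\leq f(2n)$. Nothing further is needed.
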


\begin{proof}
Indeed,
$$ f(n) = \sum_{i=1}^{n} f'(i) \leq \sum_{i=1}^{n} f'(n) = nf'(n) $$
and on the other hand:
$$ nf'(n) \leq \sum_{i=n+1}^{2n} f'(i) \leq f(2n), $$
so $f(n)\sim nf'(n)$.
\end{proof}
In particular, this applies when $f(n)$ is the (standard) growth function of a prolongable language.

% Let $X\subseteq \Sigma^{\mathbb{Z}}$ be a subshift. Let $A_X$ be the monomial algebra associated with $X$, graded via $\deg(x_1)=\cdots=\deg(x_d)=1$. Then $a=x_1+\cdots+x_d$ is homogeneous of degree $1$. Moreover, $a$ is regular.

% Conversely, ... Therefore we may assume that $a$ has degree $1$. Complete $a$ to a basis of $A_1$, say, $a=a_1,a_2,\dots,a_r$. Let $S(1):=\Sigma:=\{x_1,\dots,x_r\}$. Suppose that for each $n$, the set $S(n)=\{w_1,\dots,w_s\}\subseteq \Sigma^n$ was defined such that $\overline{S}(n):=\{w_1(a_1,\dots,a_r),\dots,w_s(a_1,\dots,a_r)\}$ is a basis for $A_n$. Since $a_1$ is regular, the set $\{a_1w_1(a_1,\dots,a_r),\dots,a_1w_s(a_1,\dots,a_r)\}$ is linearly independent. Therefore we may define $S(n+1)\subseteq \Sigma^{n+1}$ containing $x_1S(n)$, such that $\overline{S}(n+1)$ is a basis for $A_{n+1}$. We conclude that $S:=\bigcup_{n=1}^{\infty} S(n)$ forms a...

% We draw a directed edge between $S\subseteq \Sigma^n$ to $T\subseteq \Sigma^{n+1}$ if any length-$n$ factor of an element from $T$ belongs to $S$, and $x_1T\subseteq S$.

\subsection{Hierarchy of growth functions} \label{subsec:Hierarchy}

We close this section with an overview of the hierarchy of classes of growth functions, as presented in Figure 1. Though we are interested in growth functions up to asymptotic equivalence, the inclusions and equalities are true even at the level of the functions themselves. Let us prove the non-trivial relations among it. The arrow $(vi)$ is valid only for super-linear growth functions.

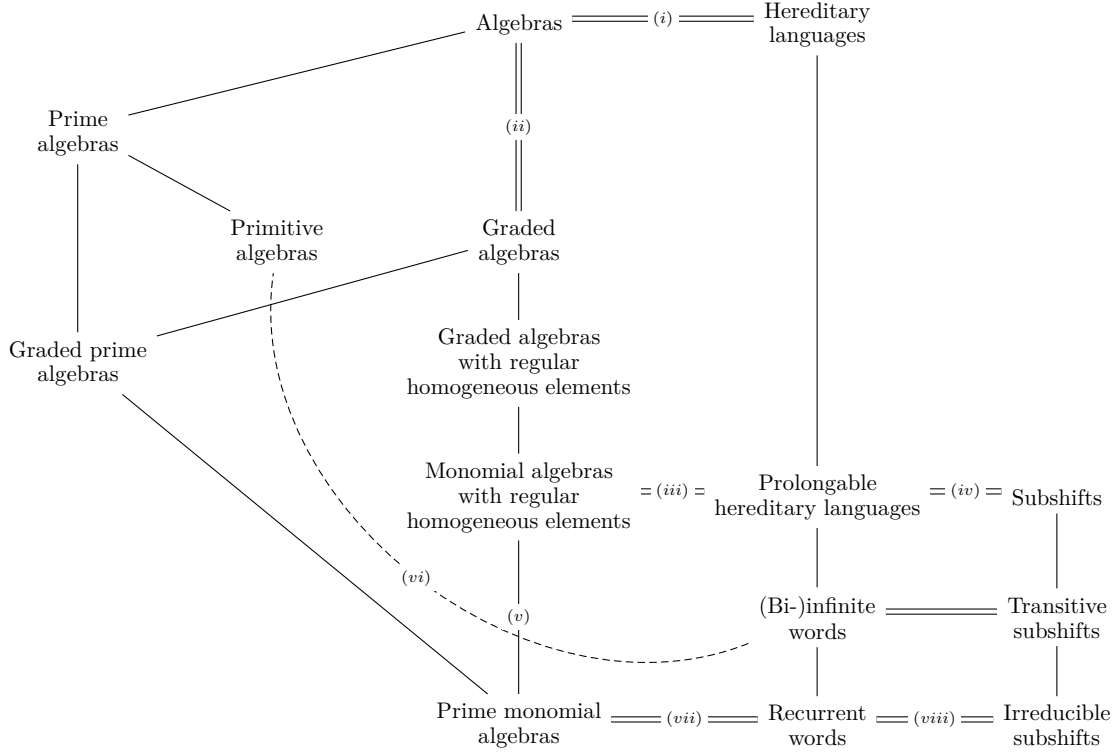
\begin{figure} \label{figure}
\centering
\begin{tikzcd}[scale cd=0.85]
                                                  &                               & \txt{Algebras} \arrow[r, "(i)" description, Rightarrow, no head]                                                          & \txt{Hereditary \\ languages}                                                                                         &                                        \\
\txt{Prime \\ algebras} \arrow[rru, no head]                        &                               &                                                                             &                                                                                                      &                                        \\
                                                  & \txt{Primitive \\ algebras} \arrow[lu, no head] & \txt{Graded \\ algebras} \arrow[uu, "(ii)" description, Rightarrow, no head]                                      &                                                                                                      &                                        \\
\txt{Graded prime \\ algebras} \arrow[rru, no head] \arrow[uu, no head] &                               & \txt{Graded algebras \\ with regular \\ homogeneous elements} \arrow[u, no head]                        &                                                         &                               \\
                                                  &                               &                                     \txt{Monomial algebras \\ with regular \\ homogeneous elements} \arrow[u, no head] \arrow[r, "(iii)" description, Rightarrow, no head]                                        & \txt{Prolongable \\ hereditary languages} \arrow[r, "(iv)" description, Rightarrow, no head] \arrow[uuuu, no head]                                                         & \txt{Subshifts}  \\
                                                  &                               &  & \txt{(Bi-)infinite \\ words} \arrow[r, Rightarrow, no head] \arrow[u, no head] \arrow[lluuu, "(vi)" description, dashed, no head, bend left=60]                                    & \txt{Transitive \\ subshifts} \arrow[u, no head]   \\
                                                  &                               & \txt{Prime monomial \\ algebras} \arrow[uu, "(v)" description, no head] \arrow[r, "(vii)" description, Rightarrow, no head] \arrow[lluuu, no head] & \txt{Recurrent \\ words} \arrow[r, "(viii)" description, Rightarrow, no head] \arrow[u, no head]                                    & \txt{Irreducible \\ subshifts} \arrow[u, no head]             
\end{tikzcd}
\caption{Hierarchy of growth functions: algebra, combinatorics and dynamics. An upward arrow means inclusion of the corresponding classes. The dashed arrow applies to super-linear functions.}
\end{figure}

\begin{proof}
$(i),(ii)$ are shown in Subsections \ref{subsec:algebras} and \ref{subsec:languages and subshifts}.
\\
$(iii)$: Let $\mathcal{L}$ be a prolongable language over a finite alphabet $\Sigma=\{x_1,\dots,x_d\}$. Consider the quotient algebra of the free $F$-algebra generated by $\Sigma$ modulo the ideal generated by all monomials which do not belong to $\mathcal{L}$, say, $A$. Then $A$ is a monomial algebra and thus graded by $\deg(x_1)=\cdots=\deg(x_d)=1$ and $a=x_1+\cdots+x_d$ is a homogneous element of degree $1$. Moreover, it is regular: suppose that $f=\sum_{i=1}^{m}c_iu_i$ is a linear combination of distinct monomials such that $af=0$; we may assume that $u_1,\dots,u_m$ have an equal degree. Thus: $$\sum_{i=1}^{m}\sum_{j=1}^{d} c_ix_ju_i=0.$$ Since $A$ is a monomial algebra and $\{x_ju_i\}_{i=1,j=1}^{m,d}$ are distinct, the coefficients must vanish, so $f=0$. The claim for $fa=0$ is similar.

% Conversely, suppose that $A=\bigoplus_{n=0}^{\infty} A_n$ is a graded algebra with a regular homogeneous element. Fix degree-$1$ generators for $A$, say, $x_1,\dots,x_d\in A_1$. Then the set of all non-zero monomials in $x_1,\dots,x_d$ of length $n$ is a basis for $A_n$, so the set of all non-zero monomials in $x_1,\dots,x_d$ forms a hereditary language whose growth is equal to the growth of $A$.

% Let us show that the resulting language is moreover prolongable. Suppose that $u$ is a non-zero monomial of length...
Conversely, suppose that $A$ is a monomial algebra with a regular homogeneous element $a=\sum_{i=1}^{m}c_iu_i\in A$, where $u_1,\dots,u_m$ are monomials of the same length and their coefficients $c_1,\dots,c_m$ are non-zero.
We claim that the language constituting of all non-zero monomials in $A$ (in the monomial presentation) is a hereditary prolongable language. It is clearly hereditary; to see that it is prolongable, let $u\in A$ be a non-zero monomial. Then, since $a$ is regular, we know that $$\sum_{i=1}^{m} c_iu_iu=au\neq 0,$$ and since $A$ is a monomial algebra, it follows that at least one of the monomials $u_1u,\dots,u_mu$ is non-zero, so $u$ is prolongable to the left. Extension to the right is similar, as $ua\neq 0$.
% By a reduction to a Veronese subalgebra as done in Subsubsection \ref{subsubsec:graded algebras}, we may assume that $\deg(a)=1$.
\\
$(iv)$ is shown in Subsubsection \ref{subsubsec:subshifts correspond to hereditary prolongable languages}.
\\
$(v)$ follows from $(iii)),(vii)$ and the other relations. %Let $A$ be a prime monomial algebra generated by $x_1,\dots,x_d$. Then $x_1+\cdots+x_d$ is a regular homogeneous element of degree $1$. Indeed, if $w$ is a monomial such that $0=w(x_1+\cdots+x_d)=wx_1+\cdots+wx_d$ then, since $A$ is a monomial algebra, we have that $wx_1=\cdots=wx_d=0$, so $\left<w\right>\cdot A_{\geq 1}=0$ is a vanishing product of ideals; since $A$ is prime, it follows that $w=0$. The case of $(x_1+\cdots+x_d)w=0$ is similar.
\\
$(vi)$: Let $w\in \Sigma^{\mathbb{Z}}$ and $X=\overline{\{T^iw\}}_{i\in\mathbb{Z}}\subseteq \Sigma^{\mathbb{Z}}$. Let $P(n)$ be the corresponding growth function and $p(n)=P'(n)$ the complexity function. Denote the groupoid of the action $\mathbb{Z}\curvearrowright X$ by $\mathfrak{G}$. If $P(n)$ is linear then it is not equivalent to the growth of any finitely generated primitive algebra, since finitely generated algebras of linear growth are PI \cite{SmallStaffordWarfield}, and finitely generated primitive PI algebras are finite-dimensional \cite{Kaplansky}.
Now, if $P(n)$ is super-linear then $w$ is not eventually periodic. %%%CHANGED%%%
Notice that $\mathfrak{G}$ is a Hausdorff groupoid, which is \textit{ample} (namely, the unit space is a locally compact Hausdorff space with a basis consisting of compact open sets) and \textit{effective} (namely, units with trivial isotropy groups are dense in the unit space) since $w$ is a-periodic.
Let $A=F[\mathfrak{G}]$ be the convolution algebra of $\mathfrak{G}$. Therefore, by \cite[Theorem~4.10]{Steinberg}, it follows that the convolution algebra $F[\mathfrak{G}]$ is primitive (clearly $\mathfrak{G}$ has a dense orbit; an explicit faithful simple module can be obtained by the bi-infinite matrix representation presented in \cite{Nekrashevych}). Besides, the growth of the convolution algebra is: $$ \gamma_{F[\mathfrak{G}]}(n)\sim np(n) $$ (see e.g.~\cite[Proposition~4.5]{Nekrashevych}.) But since $P(n)$ is the growth function of a prolongable language, we have:
$$ np(n) \sim P(n). $$
\\
$(vii), (viii)$ are immediate from the definitions together with the observation that a monomial algebra is prime if and only if for any two non-zero monomials $u,v$ there exists $w$ such that $uwv$ is non-zero.
\end{proof}

\section{Realizing and approximating growth functions} \label{sec:realizing}

\subsection{Submultiplicativity suffices up to a linear error term}

In \cite{SmoktunowiczBartholdi}, it is shown that if $f\colon\mathbb{N}\rightarrow\mathbb{N}$ is an increasing, submultiplicative function then there exists a finitely generated graded algebra $A$ such that: $$f(n) \preceq \dim_F A_n \preceq n^2f(n)$$
and consequently:
$$f(n) \preceq \gamma_A(n) \preceq n^3f(n).$$
Building on \cite{BellZelmanov}, we can realize increasing submultiplicative functions up to a \textit{linear} error term, which is the best possible approximation.

\begin{prop} \label{linear error term}
Let $f\colon\mathbb{N}\rightarrow\mathbb{N}$ be an increasing, submultiplicative function. Then there exists a finitely generated algebra $A$ whose growth function satisfies:
$$ f(n)\preceq \gamma_A(n)\preceq nf(n). $$
\end{prop}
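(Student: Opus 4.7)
The plan is to invoke Bell--Zelmanov's realization theorem on a suitable surrogate function $g$ for $f$, designed so that $g$ satisfies both conditions on its discrete derivative appearing in \cite[Theorem~1.1]{BellZelmanov}, while being squeezed between $f$ and $nf$. Set
$$g(n)=\sum_{i=1}^n \max(i+1,f(i)),$$
so that $g'(n)=\max(n+1,f(n))$. Note that since $f\colon\mathbb{N}\rightarrow\mathbb{N}$ is strictly increasing we have $f(n)\geq n$ for all $n\geq 1$, a fact used at the last step.

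First I would verify that $g$ satisfies the Bell--Zelmanov hypotheses. It is (strictly) increasing since $g'\geq 1$, and $g'(n)\geq n+1$ by construction. The key condition on discrete derivatives follows directly from submultiplicativity of $f$: for every $m\in\{n,\ldots,2n\}$, using monotonicity of $f$, the inequality $f(2n)\leq f(n)^2$, the elementary fact $2n+1\leq (n+1)^2$, and $\max(a,b)^2=\max(a^2,b^2)$,
$$g'(m)=\max(m+1,f(m))\leq \max(2n+1,f(2n))\leq \max((n+1)^2,f(n)^2)=g'(n)^2.$$
Bell--Zelmanov then produces a finitely generated algebra $A$ with $\gamma_A\sim g$.

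The next step is to verify the sandwich $f\preceq g\preceq nf$. The lower bound is immediate since $g(n)\geq g'(n)\geq f(n)$. For the upper bound,
$$g(n)\leq \sum_{i=1}^n (i+1)+\sum_{i=1}^n f(i)\leq \tfrac{n(n+3)}{2}+nf(n)\leq 2nf(n),$$
where in the last step $n^2\leq nf(n)$ follows from $f(n)\geq n$. Chaining $f\preceq g\sim \gamma_A\preceq nf$ finishes the proof.

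I expect the main obstacle to be the calibration of $g$: the two desired properties pull in opposite directions, since shrinking $g$ (to narrow the upper side of the sandwich) can destroy the inequality $g'(m)\leq g'(n)^2$, while enlarging $g$ forces the upper bound to blow up beyond $nf$. The right surrogate is precisely $\max(n+1,f(n))$: the $n+1$ summand is forced by the Bell--Zelmanov requirement $g'(n)\geq n+1$ and is harmless for the upper bound thanks to $f(n)\geq n$, while the $f(n)$ summand is exactly what transfers submultiplicativity $f(n)^2\geq f(2n)$ into the required submultiplicativity of $g'$.
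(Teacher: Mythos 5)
Your proof is correct and follows essentially the same route as the paper: realize the discrete integral of $f$ via \cite[Theorem~1.1]{BellZelmanov} and sandwich it between $f(n)$ and $nf(n)$. The only difference is your $\max(i+1,f(i))$ modification of the summand, which is a harmless (in fact welcome) refinement ensuring the hypothesis $g'(n)\geq n+1$ even when $f(1)=1$, a point the paper's proof glosses over by asserting directly that $F(n)=\sum_{i\leq n}f(i)$ satisfies the Bell--Zelmanov conditions.
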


\begin{proof}
Let $f\colon\mathbb{N}\rightarrow\mathbb{N}$ be an increasing, submultiplicative function. Let $F(n)=~\sum_{i=1}^{n} f(i)$ be its `discrete integral'. Then, since $f=F'$ is increasing and submultiplicative, in particular $F$ satisfies the conditions of \cite[Theorem~1.1]{BellZelmanov} and therefore there exists a finitely generated algebra $A$ such that $\gamma_A\sim F$. But $f(n)\leq F(n)\leq nf(n)$, and the claim follows.
\end{proof}

\begin{rem}
The above approximation is best possible: for any $\varepsilon > 0$ one can find a submultiplicative and increasing function $f\colon \mathbb{N}\rightarrow \mathbb{N}$ such that $f(n)/n^{1+\frac{\varepsilon}{2}}\xrightarrow{n\rightarrow \infty}~1$. Thus any function squeezed between $f(n)$ and $n^{1-\varepsilon} f(n)$ is super-linear and sub-quadratic, so cannot be equivalent to the growth of any algebra by Bergman's gap theorem.
% Let $\omega\colon \mathbb{N}\rightarrow \mathbb{N}$ be 
\end{rem}

%%%%%%%%%%%%%%%%%%%

\subsection{Prime monomial algebras}

We start with the following construction, which originates from \cite{SmoktunowiczBartholdi} and modified in \cite{BGIsr}, brought here in an improved version.

Let $f\colon \mathbb{N}\rightarrow \mathbb{N}$ be a non-decreasing function satisfying $f(2^{n+1})\leq f(2^n)^2$. Let $X$ be a set of cardinality $f(1)$.
Define a sequence of integers $c_{2^n}$ for $n\geq 0$ as follows. First, $c_1=\lceil \frac{f(2)}{f(1)} \rceil$. For $n\geq 1$:
\[ c_{2^n} = \left\{
\begin{array}{ll}
      \lceil \frac{f(2^{n+1})}{f(2^n)} \rceil & \text{if}\  f(1)c_1c_2\cdots c_{2^{n-1}}<2f(2^n) \\
      \lfloor \frac{f(2^{n+1})}{f(2^n)} \rfloor & \text{if}\  f(1)c_1c_2\cdots c_{2^{n-1}}\geq 2f(2^n)
\end{array} \right. \]

\begin{lem} \label{lemma_c}
We have: $$f(2^{n+1})\leq f(1)c_1c_2\cdots c_{2^n}\leq 4f(2^{n+1})$$
\end{lem}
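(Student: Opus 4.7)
The plan is to induct on $n$, letting $P_n := f(1)c_1c_2\cdots c_{2^n}$ and verifying the two-sided bound $f(2^{n+1})\leq P_n \leq 4f(2^{n+1})$. The base case $n=0$ is immediate from $c_1 = \lceil f(2)/f(1)\rceil$: one has $f(2)\leq f(1)c_1 \leq f(2)+f(1)\leq 2f(2)$. For the inductive step, I assume $f(2^n)\leq P_{n-1}\leq 4f(2^n)$ and split according to which branch of the piecewise definition assigns $c_{2^n}$.

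In the first branch (the \emph{ceiling} case), we have $P_{n-1}<2f(2^n)$ and $f(2^{n+1})/f(2^n)\leq c_{2^n}\leq f(2^{n+1})/f(2^n)+1$. The lower bound $P_n\geq f(2^{n+1})$ is then clear by multiplying the inductive lower bound by $c_{2^n}\geq f(2^{n+1})/f(2^n)$. For the upper bound, the case hypothesis $P_{n-1}<2f(2^n)$ caps the damage:
$$P_n < 2f(2^n)\left(\tfrac{f(2^{n+1})}{f(2^n)}+1\right) = 2f(2^{n+1})+2f(2^n) \leq 4f(2^{n+1}),$$
using that $f$ is non-decreasing.

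In the second branch (the \emph{floor} case), we have $P_{n-1}\geq 2f(2^n)$ and $c_{2^n}\leq f(2^{n+1})/f(2^n)$. The upper bound $P_n\leq 4f(2^{n+1})$ drops out at once from the inductive upper bound $P_{n-1}\leq 4f(2^n)$. The subtle point, which I expect to be the main obstacle, is the lower bound: the floor could a priori lose almost a full unit compared to $f(2^{n+1})/f(2^n)$, and we must show that the reserve $P_{n-1}\geq 2f(2^n)$ manufactured by the case-split absorbs this loss. Writing $f(2^{n+1})/f(2^n) = k+\epsilon$ with $k\in\mathbb{Z}_{\geq 0}$ and $0\leq \epsilon <1$, non-decreasingness of $f$ forces $k\geq 1$, so $c_{2^n}=k\geq 1>\epsilon$. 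Then
$$P_n \geq 2f(2^n)\cdot k \geq (k+\epsilon)f(2^n) = f(2^{n+1}),$$
completing the induction.

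The whole argument is essentially bookkeeping, and the only place where the definition of $c_{2^n}$ has to be nontrivially exploited is in the lower bound of the floor case; the case-split is precisely engineered so that whenever the floor is used, enough slack has been accumulated in $P_{n-1}$ to compensate. Note that the hypothesis $f(2^{n+1})\leq f(2^n)^2$ plays no role here; it will presumably be needed elsewhere, e.g. to control the individual factors $c_{2^n}$.
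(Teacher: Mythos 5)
Your proof is correct and follows essentially the same route as the paper: the same induction, the same case split driven by the definition of $c_{2^n}$, and the same use of the reserve $P_{n-1}\geq 2f(2^n)$ to absorb the loss from the floor. The only difference is cosmetic — in the floor-case lower bound you handle everything at once via the decomposition $f(2^{n+1})/f(2^n)=k+\epsilon$, whereas the paper splits into the subcases $f(2^{n+1})\geq 2f(2^n)$ and $f(2^{n+1})<2f(2^n)$; and your closing observation that $f(2^{n+1})\leq f(2^n)^2$ is not used here (it is needed later, to guarantee $c_{2^n}\leq |W(2^n)|$) is also accurate.
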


\begin{proof}
We prove this by induction.  For $n=0$, notice that $f(2)\leq f(1)c_1$ and $f(1)c_1\leq f(2)+f(1)\leq 2f(2)\leq 4f(2)$.

Now for the induction step, assume that $f(2^n)\leq f(1)c_1c_2\cdots c_{2^{n-1}}\leq 4f(2^n)$. If $f(1)c_1c_2\cdots c_{2^{n-1}}<2f(2^n)$ then:
\begin{eqnarray*}
f(1)c_1c_2\cdots c_{2^{n-1}}c_{2^n} & < & 2f(2^n)\left(\frac{f(2^{n+1})}{f(2^n)}+1\right) \\ & = & 2f(2^{n+1})+2f(2^n)\leq 4f(2^{n+1})
\end{eqnarray*}
and:
$$ f(2^{n+1})=f(2^n)\cdot \frac{f(2^{n+1})}{f(2^n)}\leq  f(1)c_1c_2\cdots c_{2^{n-1}}c_{2^n} $$
whereas if $f(1)c_1c_2\cdots c_{2^{n-1}}\geq 2f(2^n)$ then: $$ f(1)c_1c_2\cdots c_{2^{n-1}}c_{2^n} \leq 4f(2^n)\cdot \frac{f(2^{n+1})}{f(2^n)}=4f(2^{n+1}) $$
and, if furthermore $f(2^{n+1})\geq 2f(2^n)$:
\begin{eqnarray*}
f(1)c_1c_2\cdots c_{2^{n-1}}c_{2^n} & \geq & f(1)c_1c_2\cdots c_{2^{n-1}}\left(\frac{f(2^{n+1})}{f(2^n)}-1\right) \\ & \geq & 2f(2^{n+1})-2f(2^n)\geq f(2^{n+1})
\end{eqnarray*}
otherwise if $f(2^{n+1})<2f(2^n)$ then:
$$ f(1)c_1c_2\cdots c_{2^{n-1}}c_{2^n} \geq f(1)c_1c_2\cdots c_{2^{n-1}} \geq 2f(2^n) \geq f(2^{n+1}) $$
as claimed.
\end{proof}

Define a sequence of sets $W(2^n),C(2^n)$ of finite words over $X$, along with a sequence of finite sequences of words $U(2^n)\subseteq X^*$ over $X$ as follows. Let $W(1)=X$ (recall that $|X|=f(1)$).
Suppose that $W(2^n)\subseteq~X^{2^n}$ and $U(2^n)=\{u_1,\dots,u_l\}$ is given.

We pick a subset $C(2^n)\subseteq W(2^n)$ of cardinality $c_{2^n}$, such that $C(2^n)$ contains at least one word with $u_1$ as a prefix. Let $W(2^{n+1})=W(2^n)C(2^n)$. We define $U(2^{n+1})$ as follows: append to (the end of) $U(2^n)$ the words of $W(2^{n+1})$, and erase $u_1$; rename the words of $U(2^{n+1})=\{u_1,\dots,u_{l'}\}$.
This process is indeed possible by Lemma \ref{lemma_c}, since: 
\begin{eqnarray*}
c_{2^n}\leq \lceil \frac{f(2^{n+1})}{f(2^n)} \rceil \leq \lceil f(2^n) \rceil & \leq &  f(1)c_1c_2\cdots c_{2^{n-1}} \\ & = & |W(1)C(2)C(2^2)\cdots C(2^{n-1})|=|W(2^n)|
\end{eqnarray*}

Consider the following set of right-infinite words:
$$ \mathcal{S} = W(1)C(1)C(2)C(2^2)C(2^3)C(2^4)\cdots \subseteq X^{\infty} $$
and let $A_{\mathcal{S}}$ be the quotient of the free algebra $F\left<X\right>$ modulo the ideal generated by all monomials which do not occur as factors of any word from $\mathcal{S}$.

\begin{lem} \label{primeness of S}
The algebra $A_{\mathcal{S}}$ is prime.
\end{lem}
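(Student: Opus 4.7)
The plan is to use the monomial algebra criterion for primeness recalled in Subsection~\ref{subsec:algebras}: $A_{\mathcal{S}}$ is prime if and only if for every pair of non-zero monomials $u,v$ one can find a monomial $w$ such that $uwv$ appears as a factor of some right-infinite word in $\mathcal{S}$. Any non-zero monomial $u$ lies inside a finite prefix $W\in W(2^n)=W(1)C(1)\cdots C(2^{n-1})$ for some $n$, and similarly $v$ is a factor of some $W'\in W(2^k)$. My strategy is to produce a single word $\omega\in\mathcal{S}$ whose initial segment is $W$ (so that it contains $u$), whose next blocks $c_n'\in C(2^n), c_{n+1}'\in C(2^{n+1}),\ldots$ can be chosen arbitrarily, and which at some level $m>n$ uses a block $c^*\in C(2^m)$ beginning with a word containing $v$; the blocks between then automatically play the role of the connecting monomial $w$ between $u$ and $v$ inside $\omega$.

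The crux — and the main obstacle — is to guarantee that some word containing $v$ really does occur as a prefix of a block $c^*\in C(2^m)$ for arbitrarily large $m$. Here I would exploit the FIFO-queue structure of the auxiliary sequences $U(2^n)$: by construction, at stage $k$ all of $W(2^k)$ is appended at the tail of $U$, while at each subsequent stage the current head of $U$ is the only element erased, and it is forced to occur as a prefix of the block of $C(2^{\cdot})$ chosen at that stage. Since the position of any fixed queue element decreases by exactly one per stage (irrespective of how many new elements are appended), every word in $W(2^k)$ eventually reaches the head and is thereby realized as a prefix of some block in $C(2^m)$. To further ensure $m>n$, I would first pad $W'$ by concatenating arbitrary blocks from $C(2^k),C(2^{k+1}),\ldots$ to obtain a longer word in $W(2^{k'})$ still containing $v$; since the corresponding $m$ is at least $k'$, it can be pushed as far out as desired.

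With these ingredients in place, the assembled right-infinite word $\omega=w_0 c_0\cdots c_{n-1}\,c_n'\cdots c_{m-1}'\,c^*\,c_{m+1}'\cdots$ belongs to $\mathcal{S}$ (the decomposition of its initial segment $W=w_0c_0\cdots c_{n-1}$ is valid since $W\in W(2^n)$, and the remaining blocks lie in the prescribed $C(2^i)$). Because $u$ occurs inside the initial $W$ and $v$ occurs inside $c^*$, the subword of $\omega$ stretching from the start of $u$ to the end of $v$ has the form $uwv$ for a specific $w$, so $uwv$ is a factor of $\omega\in\mathcal{S}$ and hence non-zero in $A_{\mathcal{S}}$, establishing primeness.
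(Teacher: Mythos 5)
Your proof is correct and follows essentially the same route as the paper: both arguments rest on the FIFO behaviour of the queues $U(2^n)$, which forces every word of every $W(2^k)$ to eventually occur as a prefix of a block in some $C(2^m)$, and then assemble a single word of $\mathcal{S}$ containing $u$ in its initial segment and $v$ in a later block. You merely spell out two details the paper leaves implicit — the padding of $W'$ to push the realizing level $m$ past $n$, and the explicit position-decreases-by-one queue argument — which is a faithful elaboration rather than a different method.
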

\begin{proof} 
By construction, every word $u\in W(2^n)$ appears in $U(2^n)$, and therefore for some $N>n$ we have a word in $C(2^N)$ with $u$ as a prefix. It follows that $w$ factors words from infinitely many $C(2^{n_i})$'s. By construction of $\mathcal{S}$, it follows that for every pair of non-zero monomials $w_1,w_2\in A_{\mathcal{S}}$ there exists a monomial $v$ such that $w_vw_2$ factors an infinite word from $\mathcal{S}$.
\end{proof}

Let $\gamma_{\mathcal{S}}(n)$ be he number of words of length at most $n$ which occur as factors of some infinite word in $\mathcal{S}$. This function coincides with the growth function of $A_{\mathcal{S}}$ with respect to the standard set of generators $X\cup \{1\}$.

\begin{lem} \label{SBmon}
Under the above notations, there exists $C,D>0$ such that: $$\gamma_{\mathcal{S}}'(n)\leq Cnf(Dn),\ \ \gamma_{\mathcal{S}}(n) \leq Cn^2f(Dn)$$
and $f(n)\leq \gamma_{\mathcal{S}}'(2n)$.
\end{lem}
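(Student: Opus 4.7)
The plan uses the fact that, by Lemma \ref{lemma_c}, $|W(2^m)| \leq 4 f(2^m)$, so the set $W(2^m)$ contains at most $2^m \cdot |W(2^m)| \leq 8 \cdot 2^m f(2^m)$ length-$n$ substrings in total.

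For the lower bound $f(n) \leq \gamma_{\mathcal{S}}'(2n)$, I would take $m$ maximal with $2^m \leq 2n$, so that $2^m > n$. By construction each element of $W(2^m)$ is the length-$2^m$ prefix of some $\omega \in \mathcal{S}$, hence a length-$2^m$ factor, so $\gamma_{\mathcal{S}}'(2^m) \geq |W(2^m)| \geq f(2^m)$ by Lemma \ref{lemma_c}. Since the language of factors of $\mathcal{S}$ is prolongable (the words in $\mathcal{S}$ being infinite), the function $\gamma_{\mathcal{S}}'$ is non-decreasing, so $\gamma_{\mathcal{S}}'(2n) \geq \gamma_{\mathcal{S}}'(2^m) \geq f(2^m) \geq f(n)$.

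For the upper bound, writing $2^k \leq n < 2^{k+1}$, the core structural claim I would establish is that every length-$n$ factor of $\mathcal{S}$ appears as a length-$n$ substring of some element of $W(2^{k+C})$ for an absolute constant $C$. Granting this, $\gamma_{\mathcal{S}}'(n) \leq 2^{k+C} \cdot 4 f(2^{k+C}) \leq C_1 n f(Dn)$ with $D = 2^C$, and $\gamma_{\mathcal{S}}(n) \leq n \gamma_{\mathcal{S}}'(n) \leq C_2 n^2 f(Dn)$. The claim itself I would prove by induction on the position $p$ of the factor within some $\omega = w c_1 c_2 \cdots \in \mathcal{S}$: if $p + n - 1 \leq 2^{k+C}$, the factor is contained in the length-$2^{k+C}$ prefix of $\omega$, which is by definition an element of $W(2^{k+C})$; otherwise $p$ lies in some block $c_i$ with $i \geq k+C$, and since $|c_i| = 2^{i-1} > n$ the factor is either contained in $c_i$ or straddles the boundary $c_i \mid c_{i+1}$. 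In the contained case, $c_i \in C(2^{i-1}) \subseteq W(2^{i-1})$ is itself the length-$2^{i-1}$ prefix of another $\omega' \in \mathcal{S}$, and the factor now appears at strictly smaller position $p - 2^{i-1}$, so the inductive hypothesis applies.

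The main technical difficulty is the straddling case. My plan is to embed $c_i c_{i+1}$ as a suffix of some $v = a c_i c_{i+1} \in W(2^{i+1})$ (taking $a \in W(2^{i-1})$ arbitrary) and then iteratively descend through the recursive decompositions $C(2^l) \subseteq W(2^l) = W(2^{l-1}) C(2^{l-1})$ of the sub-block structures of $c_i$ and $c_{i+1}$; the self-similar hierarchy eventually places the factor inside the length-$2^{k+C}$ prefix of some element of $\mathcal{S}$. The constant $C$ absorbs the finite recursion depth required. Unlike the contained case, straddling factors at high levels do not translate to earlier positions merely by swapping $\omega$, and the argument must carefully leverage the nested hierarchical structure encoded in $W(2^{l+1}) = W(2^l) C(2^l)$ and $C(2^l) \subseteq W(2^l)$.
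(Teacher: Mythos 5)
Your lower bound is correct and is essentially the paper's argument: $\gamma_{\mathcal{S}}'(2^m)\geq |W(2^m)|\geq f(2^m)$ because the length-$2^m$ prefixes of words in $\mathcal{S}$ exhaust $W(2^m)$, and prolongability makes $\gamma_{\mathcal{S}}'$ non-decreasing. The problem is the upper bound. The paper does not prove that every short factor sits inside some $W(2^{k+C})$; it invokes \cite[Lemma~6.3]{SmoktunowiczBartholdi}, which says that every length-$2^m$ factor of $\mathcal{S}$ is a subword of an element of the \emph{product sets} $W(2^m)C(2^m)\cup C(2^m)W(2^m)$, and then counts: these sets together have $2|W(2^m)|\,|C(2^m)|\leq 8f(2^m)\lceil f(2^{m+1})/f(2^m)\rceil$ elements, each of length $2^{m+1}$, giving $\gamma_{\mathcal{S}}'(2^m)\leq 2^{m+4}f(2^{m+1})$. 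The set $C(2^m)W(2^m)$ here consists of \emph{all formal concatenations} $eu$ with $e\in C(2^m)$, $u\in W(2^m)$ --- most of which are not subwords of any $W(2^{m'})$.

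Your structural claim --- that every length-$n$ factor ($2^k\leq n<2^{k+1}$) is a subword of some element of $W(2^{k+C})$ for an absolute $C$ --- is strictly stronger than this, and it fails exactly in the straddling case you flag as the ``main technical difficulty.'' Unwinding the self-similar decomposition, a factor straddling the junction $c_i\mid c_{i+1}$ is a subword of $e\cdot u$, where $e\in C(2^m)$ is the length-$2^m$ suffix of $c_i$ and $u\in W(2^m)$ is the length-$2^m$ prefix of $c_{i+1}$ (with $2^m\geq n$). Your descent correctly reduces to this pair, but then stalls: the adjacencies of a $C(2^m)$-block followed by a $W(2^m)$-block that occur \emph{inside} elements of $W(2^{m+j})$ are completely determined by the choices of $C(2^m),\dots,C(2^{m+j-1})$, whereas the junction pairs $(e,u)$ arise from suffixes of $C(2^i)$-elements and prefixes of $C(2^{i+1})$-elements for arbitrarily large $i$. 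Since the sets $C(2^\ell)$ are chosen almost arbitrarily (only their cardinality and one prefix condition are imposed), there is no reason these pairs are realized inside any fixed $W(2^{k+C})$; already for $n=2$ the junction letter-pairs at high levels need not occur anywhere in the finite word set $W(2^{1+C})$. The repair is to enlarge your covering family from ``subwords of $W(2^{m+1})=W(2^m)C(2^m)$'' to ``subwords of $W(2^m)C(2^m)\cup C(2^m)W(2^m)$''; since $|C(2^m)W(2^m)|=|W(2^{m+1})|$, the cardinality count is unchanged and the bound $\gamma_{\mathcal{S}}'(n)\leq Cnf(Dn)$, hence $\gamma_{\mathcal{S}}(n)\leq n\gamma_{\mathcal{S}}'(n)\leq Cn^2f(Dn)$, goes through as in the paper.
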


\begin{proof}
By \cite[Lemma~6.3]{SmoktunowiczBartholdi}, every length-$2^m$ word which factors $\mathcal{S}$ is a subword of $W(2^m)C(2^m)\cup C(2^m)W(2^m)$. Each word in $W(2^m)C(2^m)\cup C(2^m)W(2^m)$ has length $2^{m+1}$, so the number of length-$2^m$ words, which is equal to $\gamma_{\mathcal{S}}'(2^m)=\gamma_{\mathcal{S}}(2^m)-\gamma_{\mathcal{S}}(2^m-1)$, is at least $|W(2^m)|$ and at most $$2^m \cdot |\left(W(2^m)C(2^m)\cup C(2^m)W(2^m)\right)|$$
Now since $f(2^m)\leq |W(2^m)|\leq 4f(2^m)$, we have:
$$ f(2^m) \leq \gamma_{\mathcal{S}}'(2^m) \leq 2^m \cdot \left(2\cdot 4f(2^m) \cdot \lceil \frac{f(2^{m+1})}{f(2^m)} \rceil \right) \leq 2^{m+4}f(2^{m+1}) $$
Since every monomial in $\mathcal{S}$ is extendable, $\gamma_{\mathcal{S}}'$ is non-decreasing, so $f(2^m)\leq \gamma_{\mathcal{S}}(2^m)\leq 2^{2m+4}f(2^{m+1})$. Since $f$ is non-decreasing as well, for every $n$, if we take $2^m\leq n\leq 2^{m+1}$ then we obtain that $f(n)\leq \gamma_{\mathcal{S}}'(2n)$ and $\gamma_{\mathcal{S}}'(n)\leq 64nf(4n)$ and $\gamma_{\mathcal{S}}(n)\leq 64n^2f(4n)$.
\end{proof}

\begin{prop} \label{prime quadratic}
Let $f\colon \mathbb{N}\rightarrow \mathbb{N}$ be an increasing submultiplicative function. Then there exists a finitely generated prime monomial algebra $A$ whose growth satisfies:
$$ f(n)\preceq \gamma_A(n) \preceq n^2 f(n) $$
\end{prop}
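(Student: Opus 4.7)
The plan is to apply the construction of $\mathcal{S}$ and $A_{\mathcal{S}}$ developed in the preceding paragraphs directly to the given function $f$. To see that its hypotheses are satisfied, I would first note that submultiplicativity gives $f(2^{n+1}) = f(2\cdot 2^n) \leq f(2^n)^2$, so $f$ obeys the constraint needed to run the inductive definition of the sets $W(2^n), C(2^n), U(2^n)$. In addition, the strict increase of $f$ forces $f(1) < f(2) \leq f(1)^2$ and hence $f(1)\geq 2$, so the base case of the construction produces a non-trivial alphabet $X$ of size $f(1)$ and a valid initial choice $C(1)\subseteq W(1)=X$.

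Having run the construction, I would invoke Lemma \ref{primeness of S} to conclude that the resulting monomial algebra $A_{\mathcal{S}}$ is prime, and Lemma \ref{SBmon} to extract the two growth estimates $f(n)\leq \gamma_{\mathcal{S}}'(2n)\leq \gamma_{\mathcal{S}}(2n)$ and $\gamma_{\mathcal{S}}(n)\leq Cn^2 f(Dn)$ for absolute constants $C,D>0$. Since $\gamma_{A_{\mathcal{S}}}(n)=\gamma_{\mathcal{S}}(n)$, these inequalities translate --- after absorbing the constants $2$, $C$ and $D$ into the definition of $\preceq$ --- into $f(n)\preceq \gamma_{A_{\mathcal{S}}}(n)\preceq n^2 f(n)$, which is precisely the claimed double bound.

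The heavy technical lifting is already carried out in Lemmas \ref{lemma_c}, \ref{primeness of S} and \ref{SBmon}; the only obstacle here is verificational. The non-obvious input is that the strict increase hypothesis is needed, not just non-decrease, in order to guarantee $f(1)\geq 2$ from submultiplicativity. Without this, the construction would degenerate (a one-letter alphabet would force every $c_{2^n}=1$, yielding an algebra resembling $F[x]$ of linear growth), and the lower bound $f(n)\preceq \gamma_A(n)$ would fail for any superlinear $f$.
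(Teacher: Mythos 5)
Your proposal is correct and follows exactly the paper's route: the paper's proof of this proposition is literally ``This immediately follows from Lemma \ref{primeness of S} and Lemma \ref{SBmon}.'' Your additional verifications --- that submultiplicativity yields $f(2^{n+1})\leq f(2^n)^2$ so the construction applies, and that strict increase forces $f(1)\geq 2$ --- are accurate and fill in details the paper leaves implicit.
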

\begin{proof} 
This immediately follows from Lemma \ref{primeness of S} and Lemma \ref{SBmon}.
\end{proof}

Prime monomial algebras give rise to hereditary prolongable languages and correspond to recurrent bi-infinite words or equivalently irreducible subshifts (see Section \ref{sec:roadmap}).

\begin{cor} \label{cor}
Let $f:\mathbb{N}\rightarrow \mathbb{N}$ be an increasing submultiplicative function. Then there exists an irreducible subshift $X\subseteq \Sigma^{\mathbb{Z}}$ whose complexity function satisfies:
$$f(n) \preceq p_X(n) \preceq n f(n)$$
Consequently, for the growth function $\gamma_X(n)$ of the subshift $X$, we have:
$$ f(n) \preceq \gamma_X(n) \preceq n^2 f(n). $$
\end{cor}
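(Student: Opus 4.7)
The plan is to extract the subshift directly from the prime monomial algebra $A_{\mathcal{S}}$ constructed in Proposition \ref{prime quadratic}, using the dictionary assembled in Section \ref{sec:roadmap}. More precisely, starting from an increasing submultiplicative $f$, I would first replace $f$ by the regularized version used in the construction (i.e.\ extend to the dyadic values via $f(2^{n+1})\le f(2^n)^2$, which is automatic from submultiplicativity), then form $\mathcal{S}$ and $A_{\mathcal{S}}$ as in the construction preceding Lemma \ref{primeness of S}. Let $\mathcal{L}$ be the hereditary language of non-zero monomials of $A_{\mathcal{S}}$. Because $A_{\mathcal{S}}$ is a prime monomial algebra (Lemma \ref{primeness of S}), for any $u,v\in\mathcal{L}$ there exists $w$ with $uwv\in\mathcal{L}$, so $\mathcal{L}$ is in particular prolongable. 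By the correspondence in Subsubsection \ref{subsubsec:subshifts correspond to hereditary prolongable languages}, the closure of the shift orbits of bi-infinite words whose factors lie in $\mathcal{L}$ defines a subshift $X\subseteq\Sigma^{\mathbb{Z}}$ with $\mathcal{L}(X)=\mathcal{L}$.

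Next I would verify that $X$ is \emph{irreducible}. This is immediate from primeness: the defining property of an irreducible subshift, namely that for every pair of factors $u,v\in\mathcal{L}(X)$ there exists $w$ with $uwv\in\mathcal{L}(X)$, is precisely the primeness condition for $A_{\mathcal{S}}$ recalled in Subsection \ref{subsec:algebras}. Thus $X$ is the desired irreducible subshift.

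For the complexity bounds, note that since $\mathcal{L}$ is prolongable, the complexity function $p_X(n)$ coincides with $\gamma_{\mathcal{S}}'(n)$, the number of length-$n$ factors of $\mathcal{S}$. The two-sided estimate from Lemma \ref{SBmon}, namely $f(n)\le \gamma_{\mathcal{S}}'(2n)$ and $\gamma_{\mathcal{S}}'(n)\le 64nf(4n)$, then translates directly to
\[
f(n)\preceq p_X(n)\preceq nf(n),
\]
which is the first bound claimed.

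Finally, for the growth $\gamma_X(n)=P_X(n)$, since $X$ is a subshift the language $\mathcal{L}(X)$ is prolongable, so $P_X''\ge 0$ as discussed in Subsubsection on Languages. By Lemma \ref{f'' and nf(n)} we obtain $\gamma_X(n)\sim n\,p_X(n)$. Combining with the previous bound gives $nf(n)\preceq \gamma_X(n)\preceq n^2f(n)$, and in particular $f(n)\preceq \gamma_X(n)\preceq n^2f(n)$ as required. The only step that demands any genuine care is matching the primeness of $A_{\mathcal{S}}$ with the irreducibility of the resulting subshift, but this is essentially tautological once one has identified $\mathcal{L}(X)$ with the language of non-zero monomials of $A_{\mathcal{S}}$; all remaining bounds are read off from Lemma \ref{SBmon} and Lemma \ref{f'' and nf(n)}.
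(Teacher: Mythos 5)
Your proposal is correct and follows essentially the same route as the paper: both extract the subshift from the prime monomial algebra $A_{\mathcal{S}}$ of Lemma \ref{primeness of S}, identify primeness of $A_{\mathcal{S}}$ with irreducibility of $X$ via the hierarchy of Subsection \ref{subsec:Hierarchy}, and read the complexity bound $f(n)\preceq p_X(n)=\gamma_{\mathcal{S}}'(n)\preceq nf(n)$ off Lemma \ref{SBmon}. The only cosmetic difference is that you recover the bound on $\gamma_X$ by integrating via Lemma \ref{f'' and nf(n)}, whereas the paper can also quote the bound on $\gamma_{\mathcal{S}}(n)$ directly from Lemma \ref{SBmon}; these are equivalent.
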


\begin{proof}
This follows from Proposition \ref{prime quadratic} and Lemma \ref{primeness of S}, since prime monomial algebras give rise to recurrent bi-infinite words (see Subsection \ref{subsec:Hierarchy}), and hence to irreducible subshifts with the same growth functions; recall that $\gamma_{\mathcal{S}}'=p_X$ for $X$ being the subshift corresponding to the prime monomial algebra $A_{\mathcal{S}}$.
% By Proposition \ref{prime quadratic} there exists a finitely generated, prime monomial algebra $B$ such that $f(n)\leq \dim_F B_n\leq Cn^2f(Dn)$ for some $C,D\in\mathbb{N}$. Since prime monomial algebras give rise to recurrent bi-infinite words (see Subsection \ref{subsec:Hierarchy}), and hence to irreducible subshifts with the same growth functions, the claim follows.
\end{proof}

\subsection{Primitive algebras}

\begin{prop}[{Proposition \ref{primitive_proposition}}] \label{primitive realization pre} 
Let $f\colon \mathbb{N}\rightarrow \mathbb{N}$  be an increasing submultiplicative function. Then there exists a finitely generated primitive algebra $B$ such that:
We have: $$f(n)\preceq \gamma_B(n)\preceq n^2f(n)$$
In particular, this applies to $f=\gamma_A$ the growth function of an arbitrary algebra.
\end{prop}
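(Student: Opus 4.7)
The plan is to build $B$ as the convolution algebra of the étale groupoid associated with a carefully chosen irreducible subshift, combining Corollary \ref{cor} (which supplies a subshift whose complexity sits in the desired range) with the primitivity and growth analysis of convolution algebras already recorded in the proof of arrow $(vi)$ of the hierarchy diagram.

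First I apply Corollary \ref{cor} to the given increasing submultiplicative function $f$, obtaining an irreducible subshift $X \subseteq \Sigma^{\mathbb{Z}}$ whose complexity function satisfies
\[ f(n) \preceq p_X(n) \preceq n f(n). \]
I assume $f$ is super-linear; the degenerate cases where $f$ is eventually constant or linear can be handled separately by exhibiting a concrete primitive algebra of polynomial growth lying between $f(n)$ and $n^2 f(n)$, e.g.\ the first Weyl algebra (quadratic growth, simple, hence primitive). Under the super-linearity assumption $p_X$ is super-linear, so by Morse--Hedlund every recurrent bi-infinite word generating $X$ is aperiodic, and the transformation groupoid $\mathfrak{G}_X$ of $\mathbb{Z} \curvearrowright X$ is Hausdorff, ample, and effective.

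Next I set $B := F[\mathfrak{G}_X]$. By the same argument used for arrow $(vi)$ in the hierarchy, Steinberg's theorem \cite[Theorem~4.10]{Steinberg} combined with the density of an orbit in $X$ implies that $B$ is primitive (an explicit faithful simple $B$-module being the bi-infinite matrix representation of \cite{Nekrashevych}). The growth of the convolution algebra is then computed by \cite[Proposition~4.5]{Nekrashevych} to be
\[ \gamma_B(n) \sim n p_X(n). \]

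The desired sandwich is now immediate from the bounds on $p_X$: from $f(n) \preceq p_X(n)$ I get $\gamma_B(n) \succeq n f(n) \succeq f(n)$, and from $p_X(n) \preceq n f(n)$ I get $\gamma_B(n) \preceq n \cdot n f(n) = n^2 f(n)$, as required. The only place where care is needed is the passage through Morse--Hedlund to ensure aperiodicity of the underlying recurrent word, since otherwise $\mathfrak{G}_X$ would fail to be effective and Steinberg's primitivity criterion would not apply; this is the main (minor) obstacle, but it is fully resolved by treating the small-$f$ regime with an off-the-shelf primitive algebra and invoking the construction above only once $f$ is super-linear.
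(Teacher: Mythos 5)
Your argument is essentially the paper's own proof: the paper invokes Proposition \ref{prime quadratic} to obtain a prime monomial algebra with growth squeezed between $f(n)$ and $n^2f(n)$ and then passes to the primitive convolution algebra of the associated groupoid via arrow $(vi)$ of the hierarchy, which is exactly your route through Corollary \ref{cor}, Steinberg's primitivity criterion, and the growth formula $\gamma_{F[\mathfrak{G}_X]}(n)\sim np_X(n)$. One small caveat: the base field is arbitrary, and in positive characteristic the first Weyl algebra is a finite module over its polynomial center, hence PI and not primitive, so for the linear case you should instead use, e.g., the convolution algebra of a Sturmian subshift (aperiodic, quadratic growth), or simply arrange the prime monomial algebra to have super-linear growth as the paper's parenthetical does.
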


\begin{proof} 

By Proposition \ref{prime quadratic} there exists a finitely generated, prime monomial algebra $A$ such that $f(n)\preceq \gamma_A(n)\preceq n^2f(n)$. As proven in Subsection \ref{subsec:Hierarchy}, there exists a finitely generated primitive algebra $B$ such that $\gamma_A\sim \gamma_B$ (we may obviously assume that $A$ does not have linear growth). The claim follows.
\end{proof}

\section{Arbitrarily rapid holes in the space of growth functions} \label{sec:arbitrarily}

\subsection{Asymptotic properties of growth functions}

By a result of Bell and Zelmanov \cite[Proposition~2.1]{BellZelmanov}, if $\gamma$ is a growth function of a finitely generated algebra then $\gamma'(m)\leq \gamma'(n)^2$ for every $m\in\{n,\dots,2n\}$. %where $\gamma'(n) = \gamma(n)-\gamma(n-1)$.
Their proof technique can be modified to yield:
\begin{rem}
Let $\gamma$ be the growth function of a finitely generated algebra with respect to some generating subspace. Let $d \in \mathbb{N}$. Then 
$\gamma'(m)\leq \gamma'(n)^d$ for every $m\in\{n,\dots,dn\}$.
\end{rem}
\begin{proof}
We may assume the algebra is monomial, so $\gamma'(n)$ is the number of (nonzero) words of length $n$ in the generators.
But if $n \leq m \leq dn$ then every word of length $m$ is a prefix of a product of $d$ words of length $n$, from which it can be recovered.
\end{proof}

This is utilized to construct an obstruction for a function to be realizable, \textit{up to equivalence}, as a growth function of an algebra.

\begin{prop} \label{property_growth}
Suppose that $f\colon\mathbb{N}\rightarrow \mathbb{N}$ is equivalent to the growth function $\gamma\colon\mathbb{N}\rightarrow \mathbb{N}$ of a finitely generated algebra with respect to a generating subspace. Then there exists $C\in \mathbb{N}$ such that for all $D\geq C^2$, for all $n$ we have: $$f(2CDn)-f(2CDn-C)\leq 2D^2n (f(CDn)-f(Cn-C))^{2D}.$$
\end{prop}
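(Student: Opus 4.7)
The plan is to transfer the desired inequality from $f$ to the growth function $\gamma$ via the asymptotic equivalence $f\sim\gamma$, apply the preceding remark (discrete submultiplicativity of $\gamma'$) twice, and transfer back. I would begin by choosing $C\in\mathbb{N}$ such that $f(n)\leq \gamma(Cn)$ and $\gamma(n)\leq f(Cn)$ for all $n$; this is possible precisely because $f\sim\gamma$.

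For the transfer step, I would bound the left-hand side by a difference of values of $\gamma$: from the equivalence, $f(2CDn)\leq\gamma(2C^2Dn)$ and $f(2CDn-C)=f(C(2Dn-1))\geq \gamma(2Dn-1)$, giving
$$f(2CDn)-f(2CDn-C)\leq \gamma(2C^2Dn)-\gamma(2Dn-1)=\sum_{m=2Dn}^{2C^2Dn}\gamma'(m).$$
Since $D\geq C^2$, this range of summation lies inside $\{2Dn,\dots,2D^2n\}$ and contains at most $2D^2n$ terms. Invoking the remark twice: first with base $2Dn$ and exponent $D$, each $\gamma'(m)$ in the sum is bounded by $\gamma'(2Dn)^D$, yielding the total bound $2D^2n\cdot \gamma'(2Dn)^D$; next with base $Dn$ and exponent $2$, $\gamma'(2Dn)\leq \gamma'(Dn)^2$, so $\gamma'(2Dn)^D\leq \gamma'(Dn)^{2D}$.

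Finally, I would translate $\gamma'(Dn)=\gamma(Dn)-\gamma(Dn-1)$ back to $f$: the upper bound $\gamma(Dn)\leq f(CDn)$ is immediate, while for the lower bound $\gamma(Dn-1)\geq f(Cn-C)$, I would use $\gamma(m)\geq f(\lfloor m/C\rfloor)$ combined with the arithmetic check $\lfloor(Dn-1)/C\rfloor\geq Cn-C$, which under $D\geq C^2$ reduces to $(D-C^2)n\geq 1-C^2$, a tautology for $C\geq 1$. Chaining the inequalities yields the claim. The proof is essentially routine once the remark is in hand; the main care required is in the bookkeeping of constants, verifying that the choice of $C$ from the equivalence interacts correctly with the parameter $D\geq C^2$ so that the various arithmetic inequalities line up. No conceptual obstacle is anticipated.
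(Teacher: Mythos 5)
Your proposal is correct and follows essentially the same route as the paper's proof: transfer the left-hand side to a telescoping sum $\sum\gamma'(m)$ over at most $2D^2n$ indices, apply the remark once with exponent $D$ at base $2Dn$ and once with exponent $2$ at base $Dn$ to reach $\gamma'(Dn)^{2D}$, and transfer $\gamma'(Dn)\leq\gamma(Dn)-\gamma(Dn-1)$ back to $f(CDn)-f(Cn-C)$. The only cosmetic difference is that the paper packages the transfers via auxiliary functions $h(n)=f(Cn)$, $\varphi(n)=\gamma(Dn)$ and bounds $\gamma(Dn-1)\geq\gamma(D(n-1))\geq f(C(n-1))$ directly, whereas you use a floor-function estimate (which additionally relies on $f$ being non-decreasing, harmless in context); the constants line up in both versions.
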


\begin{proof}
Since $f\sim \gamma$, there exists $C>0$ such that $f(n)\leq \gamma(Cn)$ and $\gamma(n)\leq f(Cn)$. In particular, for every $D\geq C^2$, we have $\gamma(n)\leq f(Cn)\leq \gamma(Dn)$.
Set $h(n)=f(Cn)$ and $\varphi(n)=\gamma(Dn)$. Then $h(n)\leq \varphi(n)\leq h(Dn)$.
Observe that: $$h'(n)=f(Cn)-f(Cn-C)\leq \gamma(Dn)-\gamma(n-1)=\sum_{k=n}^{Dn} \gamma'(k)\leq Dn\gamma'(n)^D.$$
Note also that: 
\begin{eqnarray*}
\gamma'(Dn) & =& \gamma(Dn)-\gamma(Dn-1) \\ 
& \leq & \gamma(Dn)-\gamma(Dn-D) \\
& = & \varphi(n)-\varphi(n-1) \\
& \leq & h(Dn)-h(n-1).
\end{eqnarray*}
Putting these together, we get that:
\begin{eqnarray*}
f(2CDn)-f(2CDn-C) & = & h'(2Dn) \\
& \leq & 2D^2n\gamma'(2Dn)^D 
\\
& \leq & 2D^2n\gamma'(Dn)^{2D} \\ 
& \leq & 2D^2n (h(Dn)-h(n-1))^{2D} \\ & = & 2D^2n (f(CDn)-f(Cn-C))^{2D},
\end{eqnarray*}
as desired.
\end{proof}

\subsection{Constructing a submultiplicative function}\label{construction of f}

Let $1 < d_1<d_2<\cdots$ be an increasing sequence of integers, and $n_1,n_2,\dots$ a sequence of positive integers such that: 
$$n_1 < d_1 n_1 < n_2 < d_2 n_2 < n_3 < \cdots.$$ 
Both sequences are to be restricted in the sequel
by conditions of the form ``$d_k$ is greater than a function of $\{d_i,n_i\}_{i=1}^{k-1}$'' and ``$n_k$ is greater than a function of $\{d_i,n_i\}_{i=1}^{k-1}$ and $d_k$''.

\subsubsection{The interval $[1,n_2]$}
We will define a function $f\colon\mathbb{N}\rightarrow \mathbb{N}$, first by defining it on the domain $[1,n_2]$:
\begin{itemize}
\item For $x\leq n_1$, take $f(x)=2^x$;
\item For $n_1<x\leq d_1n_1$, take $f(x)=f(x-1)+x+1$;
\item For $d_1n_1<x\leq n_2$, take $f(x)=\lfloor 2^{1/{2d_1}}f(x-1)\rfloor$.
\end{itemize}

Denote $\alpha_1=f(d_1n_1)-f(n_1)
% =\frac{d_1^2-1}{2}n_1^2+\frac{3d_1-1}{2}n_1+1 
< d_1^2n_1^2$. Since $\alpha_1$ is polynomial with respect to $n_1$ (assuming $d_1$ was fixed), if we take $n_1\gg 1$ then we may assume that $f(d_1n_1) =~2^{n_1}+\alpha_1\leq 2^{n_1+\frac{1}{3}}$ and $f'(x)\geq x+1$ under the above definition.
We also need the following fact:

\begin{lem}\label{floor}
Given $c>1$ and $\varepsilon>0$, for all $a_0\gg 1$ the sequence $a_{k+1}=\lfloor ca_k \rfloor$ satisfies $c^{k-\varepsilon}a_0\leq a_k\leq c^ka_0$.
\end{lem}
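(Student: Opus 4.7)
The plan is to handle the two inequalities separately. The upper bound $a_k \leq c^k a_0$ is immediate by induction from $a_{k+1} = \lfloor c a_k\rfloor \leq c a_k$, so essentially all the work is in the lower bound.

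For the lower bound, I would use the slack $a_{k+1} \geq c a_k - 1$ arising from the floor function, iterate, and sum the geometric error. Explicitly, by an easy induction one gets
\[
a_k \;\geq\; c^k a_0 - \sum_{j=0}^{k-1} c^j \;=\; c^k a_0 - \frac{c^k - 1}{c-1}.
\]
The key observation is that the accumulated error $\frac{c^k-1}{c-1}$ grows at the same geometric rate $c^k$ as the main term, so it represents only a bounded fraction of $c^k a_0$ once $a_0$ is large. Concretely, I want to find a threshold so that $c^k a_0 - \frac{c^k-1}{c-1} \geq c^{k-\varepsilon} a_0$ for every $k\geq 0$.

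Rearranging, the inequality becomes
\[
a_0 \;\geq\; \frac{1 - c^{-k}}{(c-1)(1-c^{-\varepsilon})},
\]
and the right-hand side is bounded above (uniformly in $k$) by $\frac{1}{(c-1)(1-c^{-\varepsilon})}$, which is a finite positive constant depending only on $c$ and $\varepsilon$ (using $c>1$ and $\varepsilon>0$ to ensure $1-c^{-\varepsilon}>0$). So taking $a_0$ exceeding this threshold gives the desired bound for all $k\geq 1$; the case $k=0$ is trivial since $c^{-\varepsilon}<1$.

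I do not anticipate a real obstacle: the argument is a one-line geometric-series estimate. The only mild subtlety is quantifying ``$a_0\gg 1$'' in a way that is uniform in $k$, which is resolved by the fact that $\frac{1-c^{-k}}{(c-1)(1-c^{-\varepsilon})}$ is bounded above by a constant independent of $k$. This will be the clean way to state the conclusion.
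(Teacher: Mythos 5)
Your argument is correct and is essentially the paper's proof: the same induction gives $a_k\geq c^ka_0-\frac{c^k-1}{c-1}$, and the conclusion follows by comparing the geometric error to the main term. If anything, your version is slightly more careful, since you make explicit that the threshold $\frac{1}{(c-1)(1-c^{-\varepsilon})}$ for $a_0$ is uniform in $k$, a point the paper's final limit leaves implicit.
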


\begin{proof}
Obviously $a_{k+1} \leq c a_k$ so $a_k \leq c^k a_0$. By induction $a_k\geq c^ka_0-\frac{c^k-1}{c-1}$, so: 
$$a_k-c^{k-\varepsilon}a_0\geq (c^k-c^{k-\varepsilon})a_0-\frac{c^k-1}{c-1}\xrightarrow{a_0\rightarrow \infty}\infty.$$
\end{proof}

Using Lemma \ref{floor} (taking $c = 2^{\frac{1}{2d_1}}$, $\varepsilon = 2^{-3}$), we can take $n_1\gg 1$ so that if $x\geq d_1n_1$ then $f(x)\geq f(d_1n_1)\cdot 2^{\frac{x-d_1n_1}{2d_1}-2^{-3}}$.
It is evident that $f$ is increasing in $[1,d_1n_1]$. % It is also increasing in $[d_1n_1,n_2]$ if we only make sure $n_1$ is large enough. 
We now turn to prove that $f$ is submultiplicative.

\begin{prop}\label{f submul 0}
If $p,q\in \mathbb{N}$ satisfy $p+q\leq n_2$ then $f(p+q)\leq f(p)f(q)$.
% The function $f\colon[1,n_2]\rightarrow \mathbb{N}$ constructed above is submultiplicative.
\end{prop}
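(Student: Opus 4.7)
The plan is to pass to logarithms and verify subadditivity of $g:=\log_2 f$ on $[1,n_2]$ by case analysis on the three regimes. The key preliminary step is to record tight two-sided bounds for $g$ in each regime:
\begin{itemize}
\item For $x\leq n_1$: $g(x)=x$ exactly.
\item For $n_1<x\leq d_1 n_1$: from the discussion immediately after the definition, $f(d_1n_1)=2^{n_1}+\alpha_1$ with $\alpha_1<d_1^2n_1^2$, so for $n_1\gg 1$ we have $n_1<g(x)\leq n_1+\tfrac13$.
\item For $d_1n_1<x\leq n_2$: the upper bound $f(x)\leq 2^{1/(2d_1)}f(x-1)$ iterates to $g(x)\leq g(d_1n_1)+(x-d_1n_1)/(2d_1)\leq n_1+\tfrac13+(x-d_1n_1)/(2d_1)$, while Lemma~\ref{floor} (with $c=2^{1/(2d_1)}$ and $\varepsilon=2^{-3}$, starting from $f(d_1n_1)\gg 1$, which holds provided $n_1$ is large) gives the matching lower bound $g(x)\geq n_1+(x-d_1n_1)/(2d_1)-\tfrac18$.
\end{itemize}
With these in hand, submultiplicativity $g(p+q)\leq g(p)+g(q)$ becomes a bookkeeping exercise in the finitely many ``regime triples'' that $(p,q,p+q)$ can occupy.

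I will go through the cases in order of difficulty. The trivial case is $p,q\leq n_1$ with $p+q\leq n_1$, where both sides equal $p+q$. Next, if $p,q\leq n_1$ but $p+q>n_1$: since $d_1\geq 2$ we still have $p+q\leq 2n_1\leq d_1n_1$, so $p+q$ lies in regime $2$, giving $g(p+q)\leq n_1+\tfrac13\leq n_1+1\leq p+q=g(p)+g(q)$. When exactly one of $p,q$ exceeds $n_1$ (WLOG $p>n_1$, $q\leq n_1$), the bound on $g(p)$ gives $g(p)+g(q)\geq n_1+q$, and the excess $(p+q-d_1n_1)^+/(2d_1)$ appearing in $g(p+q)$ is $\leq p/(2d_1)\leq p/4$ regardless of which regime $p+q$ is in, so the inequality reduces to $\tfrac{11}{24}+p/(2d_1)\leq p$, which holds for $p\geq 1$, $d_1\geq 2$.

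The most delicate case is when both $p,q>n_1$ and $p+q>d_1n_1$. Here I would split on how many of $p,q$ themselves exceed $d_1n_1$ and compute the difference $(g(p)+g(q))-g(p+q)$ explicitly: when both are in regime~2, the difference is at least $n_1-\tfrac13-n_1/2\geq 0$; when exactly one is in regime~3, using the lower bound from Lemma~\ref{floor} the difference is at least $n_1-\tfrac{11}{24}-q/(2d_1)\geq n_1/2-\tfrac{11}{24}$; when both are in regime~3, the difference telescopes to $g(d_1n_1)-n_1/2-\tfrac14\geq n_1/2-\tfrac14$. All three are nonnegative for $n_1$ large enough, which is permitted by the setup of Subsection~\ref{construction of f}.

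The only genuine obstacle is the tightness of the slack: the $+\tfrac13$ error in regime~$2$ (coming from the quadratic corrections $\alpha_1$) and the $-\tfrac18$ error from Lemma~\ref{floor} must together be absorbed by a linear term of the form $p(1-1/(2d_1))$ or $n_1/2$. Because $d_1\geq 2$ makes $1-1/(2d_1)\geq 3/4$ and because $n_1$ can be taken arbitrarily large after $d_1$ is fixed, every case closes with constants bounded away from the edge. This completes the proof outline; no step requires more than the uniform estimates above.
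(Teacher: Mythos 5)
Your proposal is correct and is essentially the paper's own argument recast in logarithmic form: you use the same three key estimates (the bound $f(d_1n_1)\leq 2^{n_1+\frac13}$, the iterated upper bound $f(x)\leq f(d_1n_1)\cdot 2^{(x-d_1n_1)/(2d_1)}$, and the matching lower bound from Lemma~\ref{floor} with $\varepsilon=2^{-3}$) and the same case split on which of $p$, $q$, $p+q$ lie in $[1,n_1]$, $(n_1,d_1n_1]$, $(d_1n_1,n_2]$. The only blemishes are cosmetic: in the ``one of $p,q$ exceeds $n_1$'' case your displayed reduction should carry the \emph{smaller} variable (you fixed $q\leq n_1$ but wrote the final inequality in $p$), and the trivial subcase $p,q>n_1$ with $p+q\leq d_1n_1$ (possible since $d_1>2$) should be listed, though it follows at once from $2n_1\geq n_1+\tfrac13$.
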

\begin{proof}
We first take care of the interval $[d_1n_1+1,n_2]$. Pick $p+q>d_1n_1$ with $p\leq q$ and we must show that $f(p+q)\leq f(p)f(q)$. We assume that $d_1>2$ and $n_1\gg 1$ (by means to be specified along the proof) and compute that:
\begin{eqnarray*}
f(p+q)&\leq &f(d_1n_1)\cdot 2^{\frac{p+q-d_1n_1}{2d_1}}\\
&\leq & 2^{n_1+\frac{1}{3}+\frac{p+q-d_1n_1}{2d_1}}\\
&=& 2^{\frac{1}{2}n_1+\frac{p+q}{2d_1}+\frac{1}{3}}.
\end{eqnarray*}
Assume that $p\leq n_1$, then $q>n_1$. Whether or not $q \leq d_1n_1$, we have that: $$\frac{f(p+q)}{f(q)}\leq 2^{\frac{p}{2d_1}}\leq 2^p=f(p);$$ the first inequality follows since the ratio between two successive numbers in $[n_1,n_2]$ is $\leq 2^{\frac{1}{2d_1}}$ if we only take $n_1\gg 1$. Thus we suppose $n_1<p$ (so in particular $f(q)\geq f(p)\geq 2^{n_1}$).
\begin{itemize}
    \item If $d_1n_1\leq p$ then (assuming $n_1>1$): 
    \begin{eqnarray*}
    f(p)f(q)&\geq & 2^{n_1+\frac{p-d_1n_1}{2d_1}-2^{-3}}2^{n_1+\frac{q-d_1n_1}{2d_1}-2^{-3}}\\
    &=& 2^{n_1+\frac{p+q}{2d_1}-2^{-2}}\\ 
    &\geq & 2^{\frac{1}{2}n_1+\frac{p+q}{2d_1}+\frac{1}{3}}\geq f(p+q).
    \end{eqnarray*}
    \item If $q\leq d_1n_1$ then: $$f(p)f(q)\geq 2^{2n_1}\geq 2^{\frac{1}{2}n_1+n_1+\frac{1}{3}}\geq f(p+q),$$ the latter inequality follows since $p+q\leq 2d_1n_1$.
    \item If $p<d_1n_1<q$ then: \begin{eqnarray*}
    f(p)f(q)& \geq &  2^{n_1}2^{n_1+\frac{q-d_1n_1}{2d_1}-2^{-3}} \\ & =& 2^{\frac{3}{2}n_1+\frac{q}{2d_1}-2^{-3}} \\ & = &  2^{\frac{3}{2}n_1+\frac{p+q}{2d_1}-\frac{p}{2d_1}-2^{-3}}\\
    & \geq & 2^{\frac{3}{2}n_1+\frac{p+q}{2d_1}-\frac{1}{2}n_1-2^{-3}}>f(p+q).
    \end{eqnarray*}
\end{itemize}
(The penultimate inequality follows since $p<d_1n_1$.) As for submultiplicativity in the interval $[n_1,d_1n_1]$ (note that the case $p+q\in [1,n_1]$ is trivial as the function $x\mapsto 2^x$ is submultiplicative), assume that $n_1\leq p+q\leq d_1n_1$.
\begin{itemize}
    \item If $p>n_1$ then: $$f(p)f(q)\geq f(n_1)^2=2^{2n_1}\geq 2^{n_1+\frac{1}{3}}=f(d_1n_1)\geq f(p+q).$$
    \item If $q<n_1$ then: $$f(p)f(q)=2^{p+q}\geq f(p+q).$$
    \item If $p\leq n_1\leq q$ then:
    \begin{eqnarray*}
    f(p+q) & \leq & f(d_1n_1) \\
    &\leq & 2^{n_1+\frac{1}{3}}\leq 2^p2^{n_1}=f(p)f(n_1)\leq f(p)f(q).
    \end{eqnarray*}
\end{itemize}
And the claim follows. % We thus proved that $f\colon[1,n_2]\rightarrow \mathbb{N}$ is submultiplicative.
\end{proof}

\subsubsection{Extending $f$ to $\mathbb{N}$}
We now extend $f$ to $\mathbb{N}$ as follows. Suppose that $d_1,\dots,d_{k-1},\\ n_1,\dots,n_{k-1}$ are fixed and $f$ is defined on the domain $[1,n_k]$ (we choose $n_k$ only after $\{d_i,n_i\}_{i=1}^{k-1},d_k$ were fixed).
Assumptions on $d_k,n_k$ will be made explicitly during the proof of submultiplicativity, in order to clarify where these assumptions originate from.
We assume $d_k\geq \frac{n_{k-1}}{2d_1\cdots d_{k-2}}$.
Define:
\begin{itemize}
    \item For $n_k<x\leq d_kn_k$, take $f(x)=f(x-1)+x+1$;
    \item For $d_kn_k<x\leq n_{k+1}$, take $f(x)=\lfloor 2^{\frac{1}{2d_1\cdots d_k}}f(x-1) \rfloor$.
\end{itemize}
Note that by taking $n_k$ to be large enough we can make sure that $f'(x)\geq x+1$.
% \begin{rem}
% If we take $n_k\gg 1$ (depending only on $\{d_i,n_i\}_{i=0}^{k-1},d_k$) then $f$ is increasing. Indeed, it is evident that in the intervals $[n_k,d_kn_k]$ the function is increasing. As for the interval $[d_kn_k,n_{k+1}]$, note that the function $f$ is increasing if $d_kn_k>\frac{1}{2^{\frac{1}{2d_1\cdots d_k}}-1}$, so we only have to take $n_k>\frac{1}{2^{\frac{1}{2d_1\cdots d_k}}-1}$.
% \end{rem}

\textbf{Condition (I).} We pick $n_k$ large enough such that for all $d_kn_k\leq x\leq y \leq n_{k+1}$ we have that: $$f(y)\geq f(x)\cdot 2^{\frac{y-x}{2d_1\cdots d_k}-2^{-k-2}}$$ (This is possible by Lemma \ref{floor} applied with $c=2^{\frac{1}{2d_1\cdots d_k}}$ and $\varepsilon=2^{-k-2}$.)
In particular,
$$f(x)\geq f(d_kn_k)\cdot 2^{\frac{x-d_kn_k}{2d_1\cdots d_k}-2^{-k-2}}$$

\begin{lem} \label{lower bound f}
We can choose $\{n_k\}_{k=1}^{\infty}$ to be sufficiently sparse such that for every $2\leq x\leq~d_kn_k$ we have that: $$f(x)\geq 2^{\frac{x}{2d_1\cdots d_k}+1+2^{-k-1}}$$
\end{lem}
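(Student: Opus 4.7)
The plan is to prove the estimate by induction on $k$, exploiting that the buffer $2^{-k-1}$ in the target exponent shrinks geometrically and is tuned exactly to absorb the loss from Condition~(I) at each successive stage. The sparseness we need on $n_k$ is nothing more than what is already enforced by Condition~(I) through Lemma~\ref{floor}; no extra condition will appear.

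For the base case $k=1$, the interval $[2, d_1n_1]$ splits at $n_1$. On $[2,n_1]$ one has $f(x)=2^x$, so the claim reduces to the elementary inequality $x\bigl(1-\tfrac{1}{2d_1}\bigr)\geq \tfrac{5}{4}$, which holds for $x\geq 2$ whenever $d_1\geq 2$. On $(n_1, d_1n_1]$ the function is non-decreasing, so $f(x)\geq f(n_1)=2^{n_1}$, and combined with $x/(2d_1)\leq n_1/2$ the bound follows as soon as $n_1\geq 3$.

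For the inductive step, I would assume the bound holds on $[2, d_{k-1}n_{k-1}]$ and partition $[2, d_k n_k]$ into three ranges. On $[2, d_{k-1}n_{k-1}]$ the inductive hypothesis is already strictly stronger than the target, since both $\tfrac{1}{2d_1\cdots d_{k-1}}\geq \tfrac{1}{2d_1\cdots d_k}$ and $2^{-k}\geq 2^{-k-1}$. On the exponential range $(d_{k-1}n_{k-1}, n_k]$ the main tool is Condition~(I) from the $(k-1)$-th stage (applied with the error term $2^{-k-1}$), which gives
$$f(x) \geq f(d_{k-1}n_{k-1}) \cdot 2^{(x - d_{k-1}n_{k-1})/(2d_1\cdots d_{k-1}) \;-\; 2^{-k-1}}.$$
Plugging in the inductive lower bound $f(d_{k-1}n_{k-1})\geq 2^{d_{k-1}n_{k-1}/(2d_1\cdots d_{k-1}) + 1 + 2^{-k}}$ and collecting exponents via the identity $n_{k-1}/(2d_1\cdots d_{k-2}) = d_{k-1}n_{k-1}/(2d_1\cdots d_{k-1})$, the pieces telescope cleanly to
$$f(x) \geq 2^{x/(2d_1\cdots d_{k-1}) + 1 + 2^{-k-1}} \geq 2^{x/(2d_1\cdots d_k) + 1 + 2^{-k-1}}.$$
Finally, on the affine range $(n_k, d_kn_k]$ the function is non-decreasing, so $f(x)\geq f(n_k)$; applying the previous case at $x=n_k$ together with the identity $d_k n_k/(2d_1\cdots d_k) = n_k/(2d_1\cdots d_{k-1})$ closes this range since $x\leq d_kn_k$.

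The hard part, such as it is, will be purely arithmetic bookkeeping rather than any genuine obstacle: the error budget $2^{-k-2}$ locked into Condition~(I), the buffer $2^{-k}$ inherited from the inductive hypothesis, and the decrement $2^{-k}-2^{-k-1}=2^{-k-1}$ to the next buffer must align precisely so that the induction closes without slack. This is exactly the reason the statement uses the halving $2^{-k-1}$ at level $k$, and it is why no new sparseness restriction on $n_k$ beyond the one implicit in Condition~(I) is required.
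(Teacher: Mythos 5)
Your proof is correct and follows essentially the same route as the paper's: induction on $k$ with the base case split at $n_1$ (requiring $n_1\geq 3$), and the inductive step split into the three ranges $[2,d_{k-1}n_{k-1}]$, $(d_{k-1}n_{k-1},n_k]$ (via Condition~(I), whose error $2^{-k-1}$ consumes exactly half of the inherited buffer $2^{-k}$), and $(n_k,d_kn_k]$ (via monotonicity and the identity $\tfrac{n_k}{2d_1\cdots d_{k-1}}=\tfrac{d_kn_k}{2d_1\cdots d_k}$). The bookkeeping of the error budgets is exactly as in the paper, so nothing further is needed.
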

\begin{proof}
We prove the assertion by induction on $k$. For $k=1$, if $2\leq x\leq n_1$ then: $$f(x)=2^x\geq 2^{\frac{x}{2d_1}+1+2^{-2}}$$
and if $n_1<x\leq d_1n_1$ then: $$f(x)\geq 2^{n_1}\geq 2^{\frac{d_1n_1}{2d_1}+1+2^{-2}}\geq 2^{\frac{x}{2d_1}+1+2^{-2}}$$ so the assertion holds (indeed, we take $n_1\geq 3$). Suppose the claim holds for $2\leq x\leq d_kn_k$ and let us prove it for $2\leq x\leq d_{k+1}n_{k+1}$; if $x\leq d_kn_k$ this is immediate from the induction hypothesis. If $d_kn_k<x\leq n_{k+1}$ then by Condition (I): $$f(x)\geq f(d_kn_k)\cdot 2^{\frac{x-d_kn_k}{2d_1\cdots d_{k}}-2^{-k-2}}$$ We can bound the latter term from below (using what we already know about $d_kn_k$):
\begin{eqnarray*}
f(d_kn_k)\cdot 2^{\frac{x-d_kn_k}{2d_1\cdots d_{k}}-2^{-k-2}}& \geq & 2^{\frac{d_kn_k}{2d_1\cdots d_{k}}+1+2^{-k-1}}\cdot 2^{\frac{x-d_kn_k}{2d_1\cdots d_{k}}-2^{-k-2}}\\ 
&= & 2^{\frac{x}{2d_1\cdots d_{k}}+1+2^{-k-2}}
\end{eqnarray*}
If $n_{k+1}<x\leq d_{k+1}n_{k+1}$ then (using what we already know about $n_{k+1}$): 
\begin{eqnarray*}
f(x)\geq f(n_{k+1})& \geq & 2^{\frac{n_{k+1}}{2d_1\cdots d_{k}}+1+2^{-k-2}}\\
&=& 2^{\frac{d_{k+1}n_{k+1}}{2d_1\cdots d_{k+1}}+1+2^{-(k+1)-1}}\\
&\geq & 2^{\frac{x}{2d_1\cdots d_{k+1}}+1+2^{-(k+1)-1}},
\end{eqnarray*}
as desired.
\end{proof}
We will use the following freely:
\begin{lem}
Given $\varepsilon>0$ we can choose $\{d_k,n_k\}_{k=1}^{\infty}$ such that $f(d_kn_k)\leq f(n_k)\cdot{2^\varepsilon}$. 
\end{lem}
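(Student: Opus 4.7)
The plan is to exploit the sharp contrast between the purely additive growth of $f$ on the interval $(n_k, d_k n_k]$ — where $f(x)=f(x-1)+x+1$ — and the exponential lower bound already available for $f(n_k)$ itself.

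First, I would telescope the increments on $(n_k, d_k n_k]$:
$$ f(d_k n_k) - f(n_k) \;=\; \sum_{x=n_k+1}^{d_k n_k} (x+1) \;\leq\; (d_k-1) n_k \bigl(d_k n_k + 2\bigr) \;\leq\; 2 d_k^2 n_k^2. $$
Thus the inequality $f(d_k n_k)\leq 2^\varepsilon f(n_k)$ is implied by
$$ 2 d_k^2 n_k^2 \;\leq\; (2^\varepsilon - 1)\, f(n_k). $$

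Next, I would invoke Lemma \ref{lower bound f} at the point $x=n_k$ (which lies in the range $[2, d_k n_k]$ covered at level $k$) to obtain
$$ f(n_k) \;\geq\; 2^{\frac{n_k}{2 d_1 \cdots d_k}\,+\,1\,+\,2^{-k-1}}, $$
which is exponential in $n_k$ with a positive rate that depends only on the already-fixed data $d_1,\dots,d_k$. Therefore, once $d_k$ has been chosen, the right-hand side of the target inequality grows exponentially in $n_k$ while the left-hand side is a fixed polynomial of degree two in $n_k$, and the inequality holds for all $n_k$ beyond some explicit threshold $N(\varepsilon, d_1, \dots, d_k)$.

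Finally, I would observe that this lower bound on $n_k$ is compatible with all the other conditions that the sparsity of $\{n_k\}$ must satisfy (Condition (I), the induction step of Lemma \ref{lower bound f}, the inequality $f'(x)\geq x+1$, and the earlier requirement $d_k n_k < n_{k+1}$), since every such condition is of the form ``$n_k$ is sufficiently large given $\{d_i,n_i\}_{i<k}$ and $d_k$''. Taking $n_k$ to be the maximum of all these thresholds makes the choices jointly feasible. The only potential obstacle is bookkeeping: verifying that no condition later in the construction retroactively forces a \emph{smaller} $n_k$; inspection confirms that all constraints are monotone ``$n_k$ large enough'', so there is no genuine mathematical difficulty beyond the exponential-vs-polynomial comparison.
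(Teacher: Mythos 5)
Your proof is correct and follows essentially the same route as the paper's: bound $f(d_kn_k)-f(n_k)$ by a quadratic polynomial in $d_kn_k$ using the additive definition of $f$ on $(n_k,d_kn_k]$, then beat it with the exponential lower bound on $f(n_k)$ from Lemma \ref{lower bound f} by taking $n_k$ large after $d_k$ is fixed. The extra bookkeeping about compatibility with the other ``$n_k$ sufficiently large'' constraints is a reasonable addition but not a substantive difference.
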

\begin{proof}
Using Lemma \ref{lower bound f} we can make sure that: $$(2^\varepsilon - 1)f(n_k)\geq (2^\varepsilon - 1)2^{\frac{n_k}{2d_1\cdots d_{k}}}\geq d_k^2n_k^2\geq f(d_kn_k)-f(n_k).$$
where the middle inequality follows since as $d_1,\dots,d_k$ are fixed, the left hand side grows more rapidly than the right hand side. The last inequality follows from the definition of $f$ on $[n_k,d_kn_k]$,
% Assuming $\{d_i,n_i\}_{i=0}^{k-1}$ and $d_k$ are fixed, we can let $n_k$ be large enough such that: $$(2^\varepsilon-1)f(n_k)\geq f(d_kn_k)-f(n_k),$$
and the claim follows.
\end{proof}
We take $n_k$ large enough such that $f(d_kn_k)\leq f(n_k)\cdot 2^{\frac{1}{3}}$.
\begin{prop}\label{f submul}
The function $f\colon\mathbb{N}\rightarrow \mathbb{N}$ constructed above is submultiplicative.
\end{prop}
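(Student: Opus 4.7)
The proof proceeds by induction on $k \geq 1$, establishing that $f(p+q) \leq f(p) f(q)$ whenever $p + q \leq n_{k+1}$. The base case $k = 1$ is Proposition~\ref{f submul 0}. In the inductive step we handle pairs with $p + q \in (n_k, n_{k+1}]$, and the argument replays Proposition~\ref{f submul 0}'s analysis with the roles of $d_1$ and $n_1$ played by $d_1 \cdots d_k$ and $n_k$, respectively.

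The key ingredients at level $k$ are: Condition~(I), which yields both the lower bound $f(y) \geq f(x) \cdot 2^{(y-x)/(2 d_1 \cdots d_k) - 2^{-k-2}}$ and, via the floor in the defining recursion, the matching upper bound $f(y) \leq f(x) \cdot 2^{(y-x)/(2 d_1 \cdots d_k)}$ throughout $[d_k n_k, n_{k+1}]$; the estimate $f(d_k n_k) \leq f(n_k) \cdot 2^{1/3}$ from the lemma immediately preceding the proposition; and Lemma~\ref{lower bound f}'s lower bound $f(x) \geq 2^{x/(2 d_1 \cdots d_k) + 1 + 2^{-k-1}}$ valid on $[2, d_k n_k]$.

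Given $p + q \in (n_k, n_{k+1}]$ with $p \leq q$, we mirror the case analysis of Proposition~\ref{f submul 0}. For $p + q$ in the geometric phase $(d_k n_k, n_{k+1}]$, where necessarily $q > d_k n_k / 2 \geq n_k$: (a) if $p, q \geq d_k n_k$, multiplying the two Condition~(I) lower bounds and comparing with the upper bound on $f(p+q)$ reduces the desired inequality to $f(d_k n_k) \geq 2^{n_k/(2 d_1 \cdots d_{k-1}) + 2^{-k-1}}$, which Lemma~\ref{lower bound f} supplies with a factor-$2$ slack; (b) if $p < d_k n_k \leq q$, combining Lemma~\ref{lower bound f} on $f(p)$ with Condition~(I) on $f(q)$ yields slack $2^{1 + 2^{-k-2}}$; (c) if $p, q < d_k n_k$ (so $q > n_k$), exploit $p + q - d_k n_k \leq p$ to obtain $f(p+q)/f(q) \leq 2^{1/3 + p/(2 d_1 \cdots d_k)}$, which is dominated by $f(p)$ via Lemma~\ref{lower bound f} since $1 + 2^{-k-1} > 1/3$. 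For $p + q$ in the linear phase $(n_k, d_k n_k]$, use $f(p+q) = f(n_k) + O(n_k^2)$ and subcase on: $p > n_k$ (trivially $f(p) f(q) \geq f(n_k)^2$); $p \leq n_k < q$ (using $f(p) \geq 2$ and $f(q) \geq f(n_k)$, so $f(p) f(q) \geq 2 f(n_k) \geq f(n_k) + O(n_k^2)$); or the delicate case $p, q \leq n_k$ with $p + q > n_k$.

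The main obstacle is this last subcase, since a direct application of Lemma~\ref{lower bound f} is too weak. Instead, we apply the inductive hypothesis to the pair $(p, n_k - p)$, obtaining $f(n_k) \leq f(p) f(n_k - p)$, hence $f(p) \geq f(n_k)/f(n_k - p)$; combined with $f(q) \geq f(n_k - p + 1)$ (since $q \geq n_k - p + 1$), the desired inequality $f(p) f(q) \geq f(p+q) = f(n_k) + O(n_k^2)$ reduces to $\bigl( f(n_k - p + 1)/f(n_k - p) - 1 \bigr) \cdot f(n_k) \geq O(n_k^2)$. The ratio difference on the left is bounded below by at least $2^{1/(2 d_1 \cdots d_{k-1})} - 1$ in any geometric phase below $n_k$ (and by a positive additive term in any linear phase), so the inequality closes out for $n_k$ sufficiently large---an assumption the construction permits at every level.
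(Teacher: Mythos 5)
Your proof is correct, and its skeleton --- induction on $k$, the split into the linear phase $(n_k,d_kn_k]$ and the geometric phase $(d_kn_k,n_{k+1}]$, and reliance on Condition (I), Lemma \ref{lower bound f} and the bound $f(d_kn_k)\leq 2^{1/3}f(n_k)$ --- is the same as the paper's. But you close several cases by a genuinely different route, and in two places your route is cleaner. In the geometric phase with $p\leq q<d_kn_k$, you bound the ratio $f(p+q)/f(q)$ directly using $p+q-d_kn_k\leq p$ and $f(q)\geq f(n_k)$, then beat it with Lemma \ref{lower bound f} applied to $p$; the paper instead lower-bounds $f(p)f(q)$ through $\beta=f(d_{k-1}n_{k-1})$ and the auxiliary inequality $(\star)$, which costs an extra constraint $d_k>d_{k-1}n_{k-1}+1$ and a further case split on $p$ versus $d_{k-1}n_{k-1}$ that your argument avoids. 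The more substantial divergence is the subcase $p,q\leq n_k<p+q$ of the linear phase: the paper splits on whether $p>d_{k-1}n_{k-1}$ and runs two separate Condition-(I)-type estimates, whereas you invoke the inductive hypothesis at the pair $(p,n_k-p)$ to get $f(p)\geq f(n_k)/f(n_k-p)$ and reduce everything to a lower bound on the single-step ratio $f(n_k-p+1)/f(n_k-p)$; this is an idea not present in the paper and it buys a uniform treatment of all $p$. Two small points you should make explicit when writing this up: the one-step ratio in a geometric phase is $\geq 2^{1/(2d_1\cdots d_j)}-1/f(m)$ because of the floor, so you need $f(d_{k-1}n_{k-1})$ large compared to $d_1\cdots d_{k-1}$ (an admissible constraint on $n_{k-1}$); and the reason $\min_{1\leq m<n_k}\bigl(f(m+1)/f(m)-1\bigr)$ is bounded below independently of $n_k$ is that $f$ on $[1,d_{k-1}n_{k-1}]$ is fixed before $n_k$ is chosen and the per-step ratio on $(d_{k-1}n_{k-1},n_k)$ is the fixed quantity $2^{1/(2d_1\cdots d_{k-1})}$ up to the floor --- only then does ``take $n_k$ large'' legitimately finish the delicate subcase.
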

\begin{proof}
Let $p,q\in \mathbb{N}$ be such that $n_k+1\leq p+q\leq n_{k+1}$ and let us prove that $f(p+q)\leq f(p)f(q)$. We work by induction, where the basis $k\leq 2$ follows from Proposition \ref{f submul 0}.

% We now turn to prove that $f$ is submultiplicative in the interval $[n_k+1,n_{k+1}]$ (by the induction hypothesis it is submultiplicative for $[1,n_k]$, where the induction base is Proposition \ref{f submul 0}). As in the basic step
We begin with $p+q\in [d_kn_k+1,n_{k+1}]$ (without limiting $n_{k+1}$, which can be thought of as infinity). Without loss of generality, $p\leq q$. Denote $\beta=f(d_{k-1}n_{k-1})$. Then: 

\begin{eqnarray*}
f(p+q) & \leq & 2^{\frac{p+q-d_kn_k}{2d_1\cdots d_k}}f(d_kn_k) \\ & \leq & 2^{\frac{p+q-d_kn_k}{2d_1\cdots d_k}+\frac{1}{3}}f(n_k) \\ & \leq & 2^{\frac{p+q-d_kn_k}{2d_1\cdots d_k}+\frac{1}{3}}\cdot \beta\cdot 2^{\frac{n_k-d_{k-1}n_{k-1}}{2d_1\cdots d_{k-1}}} \\ & = & \beta\cdot 2^{\frac{p+q-d_{k-1}d_{k}n_{k-1}}{2d_1\cdots d_k}+\frac{1}{3}}
\end{eqnarray*}
We divide into cases:
\begin{itemize}
    \item \textbf{Suppose that $p\leq d_{k-1}n_{k-1}$.} Notice that for any $t\in [n_k,d_kn_k-1]$ we have that: $$(\#)\ \ \ \ \ \frac{f(t+1)}{f(t)}\leq 1+\frac{t+2}{f(t)}\leq 1+\frac{d_kn_k+1}{2^{\frac{n_k}{2d_1\cdots d_k}}}$$ which we can take to be smaller than $2^{\frac{1}{2d_1\cdots d_k}}$ by taking $n_k$ to be large enough. Thus, if in addition we take $d_k\geq \frac{n_{k-1}}{2d_1\cdots d_{k-2}}$ then: $$\frac{f(p+q)}{f(q)}\leq 2^{\frac{p}{2d_1\cdots d_k}}\leq 2^{\frac{d_{k-1}n_{k-1}}{2d_1\cdots d_k}}\leq 2\leq f(p).$$
    (Note that the first inequality is evident if $q\geq d_kn_k$, and otherwise follows from the argument in the beginning of this case.)
    \item \textbf{If $d_{k-1}n_{k-1}<p\leq n_k$ then $q\geq n_k$ (as $d_k>2$), and assume in addition that $q\leq d_kn_k$.} 
    Note also that we can choose $d_k>d_{k-1}n_{k-1}+1$ so: $$d_{k-1}d_kn_{k-1}\leq (d_{k-1}n_{k-1}+1)(d_k-1)\leq p(d_k-1)$$ and thus (recalling that $q\leq d_kn_k$):
    \begin{eqnarray*}
    (\star)\ \ \ pd_k+d_kn_k-2d_{k-1}d_kn_{k-1}&\geq &  pd_k-p(d_k-1)-d_kd_{k-1}n_{k-1}+d_kn_k \\
    &\geq & p+q-d_{k-1}d_kn_{k-1}.\ \ \ \ \ %\tag{\asterisk}
    \end{eqnarray*}
    Then, using Condition (I): 
    \begin{eqnarray*}
    f(p)f(q)&\geq &f(p)f(n_k) \\
    &\geq &\beta^2\cdot 2^{\frac{p-d_{k-1}n_{k-1}}{2d_1\cdots d_{k-1}}-2^{-(k-1)-2}+\frac{n_k-d_{k-1}n_{k-1}}{2d_1\cdots d_{k-1}}-2^{-(k-1)-2}} \\
    &=&\beta^2\cdot 2^{\frac{pd_k+n_kd_k-2d_kd_{k-1}n_{k-1}}{2d_1\cdots d_{k}}-2^{-k}} \\
    &\geq & \beta\cdot 2^{\frac{p+q-d_kd_{k-1}n_{k-1}}{2d_1\cdots d_{k}}+\frac{1}{3}} \\
    &\geq &f(p+q).
    \end{eqnarray*}
    (The one before last inequality follows from $(\star)$ combined with the fact that $\beta \geq 2$.)
    \item \textbf{If $d_{k-1}n_{k-1}<p\leq n_k$ then $q\geq n_k$, and now assume that moreover $q>d_kn_k$.}
    By Lemma \ref{lower bound f} we have: $$\frac{f(p+q)}{f(q)}\leq 2^{\frac{p}{2d_1\cdots d_k}}\leq f(p).$$
    % We can choose $d_k>d_{k-1}n_{k-1}+1\geq \frac{p}{p-d_{k-1}n_{k-1}}$. Then: $$\frac{p}{2d_1\cdots d_k}\leq \log \beta-1+\frac{pd_k-d_kd_{k-1}n_{k-1}}{2d_1\cdots d_k}$$ so: $$\frac{f(p+q)}{f(q)}\leq 2^{\frac{p}{2d_1\cdots d_k}}\leq \beta\cdot 2^{\frac{p-d_{k-1}n_{k-1}}{2d_1\cdots d_{k-1}}-\varepsilon}\leq f(p).$$
\end{itemize}
    For the remaining cases, let $p>n_k$. 
\begin{itemize}
    \item \textbf{If $p\geq d_kn_k$} then by Condition (I):
    \begin{eqnarray*}
    f(p)f(q) & \geq & f(d_kn_k)^2\cdot 2^{\frac{p+q-2d_kn_k}{2d_1\cdots d_k}-2\cdot 2^{-k-2}},\\
    f(p+q) & \leq & f(d_kn_k)\cdot 2^{\frac{p+q-d_kn_k}{2d_1\cdots d_k}}.
    \end{eqnarray*}
    Therefore:
    \begin{eqnarray*}
    \frac{f(p)f(q)}{f(p+q)}& \geq & f(d_kn_k)\cdot 2^{-\frac{d_kn_k}{2d_1\cdots d_k}-2^{k-1}}\\
    &\geq & 2^{\frac{d_kn_k}{2d_1\cdots d_k}+1+2^{-k-1}}\cdot 2^{-\frac{d_kn_k}{2d_1\cdots d_k}-2^{k-1}}>1.
    \end{eqnarray*}
    (The middle inequality follows by Lemma \ref{lower bound f}.)
    
    % Let $c=2^{\frac{1}{2d_1\cdots d_k}}$, set $a_0=f(d_kn_k)$ and $a_{k+1}=\lfloor ca_k\rfloor$, and we must show that $a_{i+j}\leq a_ia_j$. Indeed, $a_{i+j}\leq c^ja_i$ so it suffices to prove that $a_j\geq c^j$.     We can make sure that $n_k$ is large enough such that $a_0=f(d_kn_k)\geq c^{10}$. Using Lemma \ref{floor} with $\varepsilon=10$ we can also assume (taking $n_k$ sufficiently large) that $a_j\geq c^{j-10}a_0\geq c^j$.
    
    % Note that $$\frac{a_j}{a_0}=\prod_{i=0}^{j-1} \frac{a_{i+1}}{a_i}\geq \prod_{i=0}^{j-1}\left(c-\frac{1}{a_i}\right)$$
    % so $a_k\geq (c-\frac{1}{a_0})^ka_0$; we denote $q=c-\frac{1}{a_0}$. Therefore:
    % $$\frac{a_j}{a_0}\geq c^{j}\prod_{i=0}^{j-1}\left(1-\frac{1}{ca_i}\right)\geq c^{j}\prod_{i=0}^{j-1}\left(1-\frac{1}{ca_0 q^i}\right)$$ therefore $\frac{a_j}{c^j}\geq a_0\prod_{i=0}^{\infty}(1-\frac{1}{ca_0 q^i})\xrightarrow{a_0\rightarrow \infty} \infty$ so if we take $n_k$ large enough (recalling that $c$ depends on $d_k$ but not on $n_k$) then $c^j\leq a_j$ and we proved submultiplicativity in this case.
    
    \item \textbf{If $n_k\leq p< d_kn_k$ and $q\leq d_kn_k$} then: $$f(p+q)\leq \beta \cdot 2^{\frac{p+q-d_kd_{k-1}n_{k-1}}{2d_1\cdots d_k}+\frac{1}{3}} \leq \beta \cdot 2^{\frac{2d_kn_k-d_kd_{k-1}n_{k-1}}{2d_1\cdots d_k}+\frac{1}{3}}$$ and by Condition (I) specified for $x=n_k$:
    \begin{eqnarray*}
    f(p)f(q)\geq f(n_k)^2 & \geq & \beta^2\cdot 2^{\frac{2n_k-2d_{k-1}n_{k-1}}{2d_1\cdots d_{k-1}}-2\cdot 2^{-(k-1)-2}}\\
    &=& \beta^2\cdot 2^{\frac{2d_kn_k-2d_kd_{k-1}n_{k-1}}{2d_1\cdots d_k}-2^{-k}},
    \end{eqnarray*}
    and by Lemma \ref{lower bound f} applied for $x=d_{k-1}n_{k-1}$: $$\beta\geq 2^{\frac{d_{k-1}n_{k-1}}{2d_1\cdots d_{k-1}}+1}=2^{\frac{d_kd_{k-1}n_{k-1}}{2d_1\cdots d_{k}}+1}$$ so:
    $$f(p)f(q)\geq \beta\cdot 2^{\frac{2d_kn_k-d_kd_{k-1}n_{k-1}}{2d_1\cdots d_k}+1-2^{-k}}\geq f(p+q).$$
    \item \textbf{If $n_k<p<d_kn_k<q$} then, applying Lemma \ref{lower bound f} on $x=p$, and Condition (I) for $x=q$: \begin{eqnarray*}
    f(p)f(q)&\geq & 2^{\frac{p}{2d_1\cdots d_k}+1}f(q) \\
    &\geq & 2^{\frac{p}{2d_1\cdots d_k}+1}\cdot f(d_kn_k)\cdot 2^{\frac{q-d_kn_k}{2d_1\cdots d_k}-2^{-k-2}}\\
    & \geq & f(d_kn_k)\cdot 2^{\frac{p+q-d_kn_k}{2d_1\cdots d_k}} \\
    &\geq & f(p+q).
    \end{eqnarray*}
\end{itemize}
%\bigskip
We thus proved that $f(p+q)\leq f(p)f(q)$ whenever $p+q\in [d_kn_k+1,n_{k+1}]$. It remains to show that $f(p+q)\leq f(p)f(q)$ for $p+q\in [n_k+1,d_kn_k]$.
\begin{itemize}
    \item \textbf{If $p>n_k$} then, applying Lemma \ref{lower bound f} for $x=n_k$:
    \begin{eqnarray*}
    f(p)f(q)\geq f(n_k)^2 & \geq & f(n_k)\cdot 2^{\frac{n_k}{2d_1\cdots d_k}+1}\\ & \geq & f(n_k)+d_k^2n_k^2\geq f(d_kn_k)\geq f(p+q),
    \end{eqnarray*}
    where the inequality $$f(n_k)\cdot 2^{\frac{n_k}{2d_1\cdots d_k}+1}\geq f(n_k)+d_k^2n_k^2$$ follows since if $d_k$ is fixed then the left hand side grows, as a function of $n_k$, more rapidly than the right hand side, so in particular we can take $n_k$ to be large enough such that this inequality holds.
    \item \textbf{If $p\leq q<n_k$} then: 
    \begin{eqnarray*}
    f(p+q)& \leq & f(n_k)+(p+q-n_k)^2\\
    & \leq & f(n_k)+n_k^2\\
    & \leq & f(n_k)\cdot 2^{\frac{1}{2}}\\
    & \leq & \beta\cdot 2^{\frac{n_k-d_{k-1}n_{k-1}}{2d_1\cdots d_{k-1}}+\frac{1}{2}}.
    \end{eqnarray*}
    (The penultimate inequality follows since $f$ grows exponentially in the interval $[d_{k-1}n_{k-1}+1,n_k]$, so in particular we can take $n_k$ to be large enough such that $f(n_k)\gg n_k^2$).
    \begin{itemize}
\item  If in addition $d_{k-1}n_{k-1}<p$ then, using Condition (I): 
    \begin{eqnarray*}
    f(p)f(q)&\geq & \beta ^2\cdot 2^{\frac{p+q-2d_{k-1}n_{k-1}}{2d_1\cdots d_{k-1}}-2\cdot 2^{-(k-1)-2}}\\
    &=& \beta^2\cdot 2^{\frac{p+q-d_{k-1}n_{k-1}}{2d_1\cdots d_{k-1}}}\cdot 2^{\frac{-d_{k-1}n_{k-1}}{2d_1\cdots d_{k-1}}-2^{-k}}.
    \end{eqnarray*}
    But applying Lemma \ref{lower bound f} for $x=d_{k-1}n_{k-1}$ we get $\beta\geq 2^{\frac{d_{k-1}n_{k-1}}{2d_1\cdots d_{k-1}}+1}$ so:
    \begin{eqnarray*}
    f(p)f(q) &\geq &\beta\cdot 2^{\frac{p+q-d_{k-1}n_{k-1}}{2d_1\cdots d_{k-1}}+1-2^{-k}} \\
    &\geq & \beta\cdot 2^{\frac{n_k-d_{k-1}n_{k-1}}{2d_1\cdots d_{k-1}}+1-2^{-k}} \\
    &\geq &\beta\cdot 2^{\frac{n_k-d_{k-1}n_{k-1}}{2d_1\cdots d_{k-1}}+\frac{1}{2}} \\
    &\geq & f(p+q).
    \end{eqnarray*}

    \item Now if $p\leq d_{k-1}n_{k-1}$ (no restrictions on $q$, except that $q\leq p+q\leq d_kn_k$), then, taking $n_k>2d_{k-1}n_{k-1}$, it follows that $q>d_{k-1}n_{k-1}$. Now by $(\#)$:
    $$\frac{f(p+q)}{f(n_k)}\leq 2^{\frac{p+q-n_k}{2d_1\cdots d_k}} \leq 2^{\frac{p+q-n_k}{2d_1\cdots d_{k-1}}}$$ and by the definition of $f$:
    $$\frac{f(n_k)}{f(q)} \leq 2^{\frac{n_k-q}{2d_1\cdots d_{k-1}}}$$ so:
    $$\frac{f(p+q)}{f(q)} \leq 2^{\frac{p}{2d_1\cdots d_{k-1}}}\leq f(p)$$
    where the last inequality follows from Lemma \ref{lower bound f} for $x=p$ (keeping in mind that $p\leq d_{k-1}n_{k-1}$).
    % and $q$ is general. (recall $q\leq p+q\leq d_kn_k$). Note that if we make sure that $n_k>2d_{k-1}n_{k-1}$ then it is forced that $q>d_{k-1}n_{k-1}$. Now: $$\frac{f(p+q)}{f(q)}\leq 2^{\frac{p}{2d_1\cdots d_{k-1}}}\leq f(p)$$ where the last inequality follows from Lemma \ref{lower bound f} for $x=p$.
    \end{itemize}
    \item \textbf{It remains to take care of the case $d_{k-1}n_{k-1}\leq p\leq n_k\leq q$.} But notice that: $$f(p+q)\leq f(d_kn_k)\leq f(n_k)+d_k^2n_k^2\leq 2f(n_k)$$ since $f$ grows exponentially in the interval $[d_{k-1}n_{k-1}+1,n_k]$, so in particular we can take $n_k$ to be large enough such that $f(n_k)\gg d_k^2n_k^2$ (note that $d_k$ is already fixed when we choose $n_k$). Now: $$f(p)f(q)\geq 2f(q)\geq 2f(n_k).$$
    \end{itemize}
We thus proved that $f\colon\mathbb{N}\rightarrow \mathbb{N}$ is a submultiplicative function.
\end{proof}

\subsection{Our construction is not equivalent to any growth function}

Let $f\colon\mathbb{N}\rightarrow~\mathbb{N}$ be an increasing, submultiplicative function as constructed in Section \ref{construction of f}, with respect to sequences $\{d_k,n_k\}_{k=1}^\infty$.

\begin{prop} \label{notequiv}
We can choose $\{d_k,n_k\}_{k=1}^{\infty}$ such that the resulting function $f$ is not equivalent to the growth function of any finitely generated algebra.
\end{prop}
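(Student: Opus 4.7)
The plan is to derive a contradiction from Proposition~\ref{property_growth} by exploiting the alternating ``plateau/exponential-jump'' pattern built into $f$. Assume for contradiction that $f\sim\gamma$ for some growth function $\gamma$ of a finitely generated algebra, and let $C\in\mathbb{N}$ be the constant supplied by Proposition~\ref{property_growth}. Since $\{d_k\}$ is unbounded, choose $k$ with $d_k\geq\max(C^2+2,3)$, set $D=d_k-1$ (so $D\geq C^2$), and set $n=\lceil(n_k+C)/C\rceil$. Requiring $n_{k+1}$ to be large enough, I arrange that $Cn-C$ sits on the left boundary of the slow-growth interval $[n_k,d_kn_k]$, that $CDn\approx(d_k-1)n_k$ sits inside this interval, and that $2CDn\approx 2(d_k-1)n_k$ lies in the subsequent exponential interval $[d_kn_k,n_{k+1}]$. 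Recall that $f$ is nearly constant on the slow window (its increments there are only of linear size), but grows by the multiplicative factor $2^{1/(2d_1\cdots d_k)}$ per step immediately past $d_kn_k$.

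The first step is to estimate the right-hand side of the inequality in Proposition~\ref{property_growth}. On the slow window the linear increments telescope to $f(CDn)-f(Cn-C)\leq (d_kn_k)^2$, which yields
\[
2D^2n\,(f(CDn)-f(Cn-C))^{2D}\;\leq\;C'\cdot d_k^{\,4d_k+2}\,n_k^{\,4d_k+1}
\]
for a constant $C'$ depending only on $C$ and $d_1,\dots,d_k$, i.e.\ a polynomial in $n_k$. Second, I estimate the left-hand side: Lemma~\ref{lower bound f} together with Condition~(I) propagates exponential growth through $d_kn_k$, yielding $f(2CDn)\gtrsim 2^{n_k(d_k-1)/(d_1\cdots d_k)}$, and a short computation using Condition~(I) on the sub-interval $[2CDn-C,2CDn]$ then gives $f(2CDn)-f(2CDn-C)\geq c_{k,C}\cdot f(2CDn)$ for some positive $c_{k,C}$ depending only on $k$ and $C$. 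Consequently the left-hand side is of order $2^{\Omega(n_k/(d_1\cdots d_{k-1}))}$, i.e.\ genuinely exponential in $n_k$.

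Comparing the two estimates, the left-hand side is exponential in $n_k$ while the right-hand side is polynomial in $n_k$ (with $C$ and $d_1,\dots,d_k$ regarded as fixed). I therefore add to the largeness constraints imposed on $n_k$ in Section~\ref{construction of f} the further requirement that $n_k$ be chosen large enough to force this failure for every $C$ with $C^2+2\leq d_k$; only finitely many such $C$ enter at each stage, so this is compatible with all earlier constraints. Since $d_k\to\infty$, every possible value of $C$ is eventually handled by some stage, completing the contradiction and proving that $f$ is not equivalent to any growth function.

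The main obstacle will be executing the two-sided estimates in the exponential window past $d_kn_k$ carefully: the floor operation in $f(x)=\lfloor 2^{1/(2d_1\cdots d_k)}f(x-1)\rfloor$ and the $2^{-k-2}$ correction in Condition~(I) must each be shown to translate into only negligible additive losses in the exponent, so that the dominant factor $2^{n_k/(d_1\cdots d_{k-1})}$ in the left-hand side survives and plainly dwarfs the polynomial right-hand side. Once this bookkeeping is in place, the remainder is routine.
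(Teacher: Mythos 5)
Your proposal is correct and follows essentially the same route as the paper's own proof: both contradict Proposition~\ref{property_growth} by placing $Cn-C$ and $CDn$ in the arithmetic-increment window $[n_k,d_kn_k]$ (making the right-hand side polynomial in $n_k$) and $2CDn$ in the geometric window past $d_kn_k$ (making the left-hand side exponential in $n_k$), then taking $n_k$ large; the only differences are bookkeeping (the paper fixes $k=C$, $n_k=km_k$, $n=m_k+1$, $D=\lfloor d_k(1-\tfrac{1}{m_k+1})\rfloor$, whereas you take $D=d_k-1$, $n=\lceil(n_k+C)/C\rceil$ and handle all $C$ with $C^2+2\leq d_k$ at stage $k$). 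The delicate point you flag — that the $2^{-k-2}$ loss in Condition~(I) must not swallow the gain $\tfrac{C}{2d_1\cdots d_k}$ over a length-$C$ subinterval — is present in the paper's argument as well, so your sketch is at the same level of rigor.
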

\begin{proof}
Since the conditions on $d_k$ and $n_k$ in Section~\ref{construction of f} are of the form:
$$ d_k\geq \mu_k \left(d_1,n_1,\dots,d_{k-1},n_{k-1}\right),\ \ \ d_k\geq \nu_k \left(d_1,n_1,\dots,d_{k-1},n_{k-1},d_k\right)$$
for suitable functions $\mu_k \colon \mathbb{N}^{2k-2}\rightarrow \mathbb{N},\ \nu_k \colon \mathbb{N}^{2k-1}\rightarrow \mathbb{N}$, we may further assume that $n_k=km_k$ for $m_k$ to be determined in the sequel.

Suppose that $C\in \mathbb{N}$ is given. Consider $k=C$, $n=m_k+1$ and $D=\lfloor d_k(1-~\frac{1}{m_k+1})\rfloor$, and observe that $\frac{1}{2}d_k\leq D\leq d_k$. If we take $d_k\geq 2k^2$ then it follows that $D\geq \frac{1}{2}d_k\geq k^2=C^2$. We aim to contradict the property stated in Proposition \ref{property_growth} with respect to these parameters, namely, we aim to show that: $$f(2CDn)-f(2CDn-C)>2D^2n\left(f(CDn)-f(Cn-C)\right)^{2D}.$$
Notice that $2CDn\leq 2kd_k(m_k+1)\leq 4d_kn_k$, and as long as we assume that $n_k>2k$, we have:
$$2CDn-C\geq 2CDn-CD\geq 2kDm_k\geq kd_km_k=d_kn_k$$
so, if we assume that $n_{k+1}>4d_kn_k$ then we may apply Condition (I) to $x=2CDn-C,y=2CDn$:
\begin{eqnarray*}
f(2CDn)-f(2CDn-C)& \geq & f(2CDn-C)\cdot 2^{\frac{C}{2d_1\cdots d_k}-2^{-k-2}}-f(2CDn-C) \\ & = &
f(2CDn-C)\cdot (2^{\frac{k}{2d_1\cdots d_k}-2^{-k-2}}-1) \\
& \geq & f(2CDn-C)\cdot \Delta_k
\end{eqnarray*}
where $\Delta_k=2^{\frac{k}{2d_1\cdots d_k}-2^{-k-2}}-1$ does not depend on $n_k$. % (We are going to take $n_k$ large enough to overcome this factor, which is very small).
Now, using Lemma \ref{lower bound f} for $x=d_kn_k$: $$f(2CDn-C)\geq f(d_kn_k)\geq 2^{\frac{d_kn_k}{2d_1\dots d_k}}=q_k^{n_k}=q_k^{km_k}$$ where $q_k=2^{\frac{1}{2d_1\cdots d_{k-1}}}$.
So, $f(2CDn)-f(2CDn-C)\geq q_k^{km_k}\cdot \Delta_k$.
On the other hand,
\begin{eqnarray*}
n_k = k m_k & = & Cn-C < CDn \\ & \leq & k\left(d_k\frac{m_k}{m_k+1}\right)(m_k+1)=d_kn_k
\end{eqnarray*}
so (by the definition of $f$ on intervals of this type): $$f(CDn)-f(Cn-C)\leq (CDn)^2\leq (kd_k(m_k+1))^2$$ hence:
\begin{eqnarray*}
2D^2n(f(CDn)-f(Cn-C))^{2D}& \leq & 2d_k^2(m_k+1)(kd_k(m_k+1))^{2d_k}\\
&\leq & (m_k+1)^{2d_k+1}\cdot \Gamma_k
\end{eqnarray*}
where $\Gamma_k=2d_k^2(kd_k)^{2d_k}$ depends only on $k,d_k$ (but not on $m_k$, or equivalently, on $n_k$).
Finally, note that as we fix $k,d_1,\dots,d_k$ and let $m_k\rightarrow \infty$ we get:
\begin{eqnarray*}
f(2CDn)-f(2CDn-C)&\geq &(q^k)^{m_k}\cdot \Delta_k \\
& \gg & (m_k+1)^{2d_k+1}\cdot \Gamma_k \\
&\geq & 2D^2n(f(CDn)-f(Cn-C))^{2D}
\end{eqnarray*}
contradicting the property ensured by Proposition \ref{property_growth} for functions which are equivalent to growth functions of algebras. Thus, $f$ is not equivalent to the growth of any finitely generated algebra.
\end{proof}

\subsection{Arbitrarily rapid holes: Proof of Theorem \ref{main thm submul}}

\begin{prop} \label{superpoly}
Let $g\colon\mathbb{N}\rightarrow \mathbb{N}$ be an arbitrary subexponential function. Then we can choose $\{d_k,n_k\}_{k=1}^{\infty}$ such that the resulting function in our construction $f$ satisfies $f\succeq g$.
\end{prop}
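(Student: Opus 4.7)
The strategy is to impose, at each stage $k$, one additional lower bound on $n_k$ (compatible with the bounds already required in Section~\ref{construction of f}) which forces $f$ to dominate $g$ throughout the upcoming ``slow'' window $[n_k, d_k n_k]$. Once $f$ dominates $g$ on every slow window, monotonicity (plus the fact that on the ``fast'' windows $f$ grows exponentially while $g$ is subexponential) will propagate the dominance to all of $\mathbb{N}$.

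The key observation is that the construction of $f$ alternates between two regimes: on $[d_{k-1}n_{k-1}+1, n_k]$ the function $f$ grows exponentially with base $2^{1/(2d_1\cdots d_{k-1})}$, whereas on $[n_k+1, d_k n_k]$ the function $f$ grows only arithmetically, in fact $f(x)-f(n_k)\leq d_k^2 n_k^2$. On the exponential pieces any subexponential $g$ is eventually dominated for free; the only delicate region is the plateau $[n_k, d_k n_k]$, where $f$ is essentially frozen at $f(n_k)$ while $g$ may still grow. So it suffices to arrange $g(d_k n_k) \leq f(n_k)$ for each $k$, and then to check that across the exponential piece preceding $[n_k, d_k n_k]$ we already had $g(n) \leq f(n)$.

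To arrange this, I will use the fact that $g$ is subexponential, i.e.\ $\log_2 g(m)/m \to 0$. By Lemma~\ref{lower bound f} (applied at the previous stage) we have
$$ f(n_k) \geq 2^{\,n_k/(2d_1\cdots d_{k-1})+1}. $$
Thus, having already fixed $d_1,n_1,\dots,d_{k-1},n_{k-1}$ and $d_k$, I add to the list of lower bounds on $n_k$ the requirement
$$ \frac{\log_2 g(d_k n_k)}{d_k n_k} \;\leq\; \frac{1}{2 d_1 \cdots d_{k-1}\cdot d_k}, $$
which is equivalent to $g(d_k n_k) \leq 2^{n_k/(2d_1\cdots d_{k-1})} \leq f(n_k)$. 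Since $\log_2 g(m)/m \to 0$, this condition is satisfied for all sufficiently large $n_k$, and hence is compatible with the finitely many other lower bounds on $n_k$ imposed in Section~\ref{construction of f}. Similarly, for $k=1$ I require $n_1$ large enough that $g(d_1 n_1) \leq 2^{n_1} = f(n_1)$.

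With this extra constraint, dominance $g(n) \leq f(n)$ for all $n \geq 1$ (hence $f\succeq g$) follows by induction on the interval to which $n$ belongs. On $[1,n_1]$, enlarge $n_1$ further if needed so $g(n_1)\leq 2^{n_1}$. On the plateau $[n_k,d_k n_k]$, monotonicity of $g$ together with $g(d_k n_k) \leq f(n_k) \leq f(n)$ does the job. On the exponential piece $[d_k n_k+1, n_{k+1}]$, we combine $g(n) \leq 2^{\varepsilon n}$ (with $\varepsilon$ arbitrarily small for $n$ large) against the exponential lower bound of Lemma~\ref{lower bound f}; making $n_k$ (equivalently, the starting point of that piece) large enough ensures $g(n) \leq f(n)$ throughout. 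The main obstacle is precisely the plateau region, but it is exactly what the added constraint on $n_k$ handles, so no further complication arises.
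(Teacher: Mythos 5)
Your proof is correct and takes essentially the same route as the paper's: both encode subexponentiality as $\log_2 g(m)/m\to 0$, force $n_k$ large enough that this ratio drops below $1/(2d_1\cdots d_k)$ beyond the $k$-th window, and invoke Lemma~\ref{lower bound f} (together with Condition~(I) on the exponential pieces) to bound $f$ from below by a competing exponential. Two small repairs are worth noting: the plateau step invokes monotonicity of $g$, which is not hypothesised (either pass to $\tilde g(n)=\max_{m\le n}g(m)$, still subexponential, or compare directly against $2^{n\omega(n)}$ as the paper does), and the requirement handling the exponential piece $[d_kn_k+1,n_{k+1}]$ should be stated as a uniform bound on $\log_2 g(m)/m$ for all $m\ge n_k$ rather than only at the single point $d_kn_k$ — the same choice of $n_k$ achieves both.
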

\begin{proof}

Since $g$ is assumed to be subexponential, there exists $\omega\colon\mathbb{N}\rightarrow \mathbb{R}$ such that $\omega(n)\xrightarrow{n\rightarrow \infty} 0$ and $g(n)\leq 2^{n\omega(n)}$.

We can take $n_k>\max\{m\ |\ \omega(m)\geq \frac{1}{2d_1\cdots d_k}\}$ for all $k\geq 1$.
We claim that for all $x\geq n_1$ we have that $f(x)\geq 2^{x\omega(x)}$. There are two possibilities: either $x\in [n_j,d_jn_j]$ or $x\in [d_jn_j,n_{j+1}]$ for some $j\geq 1$. Let us consider the first case. Then $x\geq n_j$ so by the way we picked $n_j$ we have that $\omega(x)<\frac{1}{2d_1\cdots d_j}$.
By Lemma \ref{lower bound f} we have that: $$f(x)\geq 2^{\frac{x}{2d_1\cdots d_j}}\geq 2^{x\omega(x)}.$$
As for the second case, if $x\in [d_jn_j,n_{j+1}]$ then by Condition (I), % (again using Lemma \ref{lower bound f} for $d_jn_j$): 
$$ f(x) \geq 2^{\frac{x-d_jn_j}{2d_1\cdots d_j}-2^{-j-2}}f(d_jn_j) $$
Now by Lemma \ref{lower bound f} for $d_jn_j$,
$$ f(d_jn_j) \geq 2^{\frac{d_jn_j}{2d_1\cdots d_j}+1+2^{-j-1}} $$
so:
\begin{eqnarray*}
f(x) & \geq & 2^{\frac{x-d_jn_j}{2d_1\cdots d_j}-2^{-j-2}} \cdot 2^{\frac{d_jn_j}{2d_1\cdots d_j}+1+2^{-j-1}} \\
& \geq & 2^{\frac{x}{2d_1\cdots d_j}}\\
& \geq & 2^{x\omega(x)}.
\end{eqnarray*}
Thus $f(x)\geq 2^{x\omega(x)}\geq g(x)$ for all $x\gg 1$.
\end{proof}

% \begin{prop} \label{superpoly}
% We can choose $\{d_i,n_i\}_{i=0}^{\infty}$ such that the resulting function is $f$ is super-polynomial.
% \end{prop}
% \begin{proof}
% We make sure that $n_k>(2kd_1\cdots d_k)^2$ for all $k\geq 1$. Now fix $r\in \mathbb{N}$. We claim that if $x\geq n_r$ then $f(x)\geq x^r$. There are two possibilities: either $x\in [n_j,d_jn_j]$ or $x\in [d_jn_j,n_{j+1}]$ for some $j\geq r$. Let us consider the first case. Note that: $$\frac{x}{\log x}\geq \sqrt{x}\geq \sqrt{n_j}\geq 2jd_1\cdots d_j$$ so $2^{\frac{x}{2d_1\cdots d_j}}\geq 2^{j\log x}=x^j$, but by Lemma \ref{lower bound f} we have that $f(x)\geq 2^{\frac{x}{2d_1\cdots d_j}+1}\geq x^j\geq x^r$. As for the second case, if $x\in [d_jn_j,n_{j+1}]$ then (again using Lemma \ref{lower bound f} for $d_jn_j$): $$f(x)\geq 2^{\frac{x-d_jn_j}{2d_1\cdots d_j}-\varepsilon}f(d_jn_j)\geq 2^{\frac{x}{2d_1\cdots d_j}+1-\varepsilon}\geq 2^{j\log x}\geq x^j\geq x^r.$$ Thus $f$ is super-polynomial.
% \end{proof}

Finally, we have:

\begin{proof}[{Proof of Theorem \ref{main thm submul}}]
The theorem follows since we can take $\{d_k,n_k\}_{k=1}^\infty$ satisfying all conditions required in Propositions \ref{f submul}, \ref{notequiv} and \ref{superpoly}.
\end{proof}

\begin{rem} \label{Remark on exponentials}
If $f\colon\mathbb{N}\rightarrow\mathbb{N}$ is an increasing and submultiplicative function and $\limsup_{n\rightarrow\infty} \sqrt[n]{f(n)} > 1$ then $f\sim \exp(n)$, thus equivalent to the growth of (any) noncommutative free algebra.

Indeed, if $f(t_i)>a^{t_i}$ for some $t_1<t_2<\cdots$ with $t_i\leq 2^{s_i}\leq 2t_i$ then $f(2^{s_i})>~\sqrt{a}^{2^{s_i}}$; set $b=\sqrt{a}$. But if $f(2^s)>b^{2^s}$ then $f(2^{s-1})\geq \sqrt{f(2^s)}>b^{2^{s-1}}$, and since $s_1<s_2<\cdots$ are unbounded then for \textit{any} $t$, take $s$ such that $2^s\leq t< 2^{s+1}$ we have $f(t)\geq f(2^s)\geq b^{2^s}\geq \sqrt{b}^t$.

Thus, even though we did not explicitly prove it, it follows that $f$ from Theorem \ref{main thm submul} are always subexponential.
\end{rem}

\section{Growth rates encoding nilpotent ideals and non-prolongable words} \label{sec:encoding}

\subsection{Asymptotically invariant conditions on higher discrete derivatives}
% For an increasing function $f\colon\mathbb{N}\rightarrow \mathbb{N}$ define the discrete derivative to be $f'(n)=f(n)-f(n-1)$ for $n>1$ (we formally set $f'(1)=f(1)$).
We begin with a somewhat technical definition, which would be useful in the sequel:

\begin{defn}
We say that an increasing function $f\colon\mathbb{N}\rightarrow\mathbb{N}$ satisfies \textit{Condition (II)} if there exists $C\in \mathbb{N}$ such that for all $N,m\geq 1$: $$f'\left((C^2+1)m\right)\leq \left(f(2C^2N)-f(N)\right)\left(f\left(2C^2N+(C^4+C^2)m\right)-f(N+m)\right).$$
\end{defn}

We are now ready to translate certain algebraic properties of algebras to validity of Condition (II) for functions lying in the equivalence classes of their growth functions.

\begin{lem} \label{gen_gr}
Let $A=\bigoplus_{i=0}^{\infty} A_i$ be a graded algebra generated by $F+A_1\cdots+~A_p$. Then for any $r\in \mathbb{N}$ we have that $\bigoplus_{i=r}^{\infty} A_i$ is generated as a left ideal by $A_{r}+~\cdots +~A_{r+(p-1)}$.
\end{lem}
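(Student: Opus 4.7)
My plan is to prove the lemma by a direct combinatorial decomposition of monomials in the generators.

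The inclusion $A\cdot(A_r+\cdots+A_{r+p-1})\subseteq \bigoplus_{i=r}^\infty A_i$ is immediate from the grading, since $A_i\cdot A_j\subseteq A_{i+j}$ and $r\leq j\leq r+p-1$ forces the product's degree to be at least $r$.

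For the reverse inclusion, it suffices to show that every homogeneous element of degree $n\geq r$ lies in $A\cdot(A_r+\cdots+A_{r+p-1})$. By hypothesis, $A_n$ is spanned by products $x_{i_1}x_{i_2}\cdots x_{i_k}$ with each $x_{i_j}$ homogeneous of some degree $d_j\in\{1,\ldots,p\}$ satisfying $d_1+\cdots+d_k=n$. The key step is to locate a suitable index $j$ at which to split this monomial. Consider the sequence of suffix degrees
\[ 0,\ d_k,\ d_{k-1}+d_k,\ \ldots,\ d_2+\cdots+d_k,\ d_1+\cdots+d_k=n. \]
This sequence starts at $0<r$, ends at $n\geq r$, and increases by at most $p$ at each step. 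Hence there must exist some $j$ for which the suffix degree $d_j+d_{j+1}+\cdots+d_k$ lies in the interval $[r,r+p-1]$ (otherwise the sequence would jump over this interval of width $p$ in a single step of size $\leq p$, contradicting the requirement that the previous term be $<r$ and the next term be $\geq r+p$).

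Using this $j$, we write
\[ x_{i_1}\cdots x_{i_k}=(x_{i_1}\cdots x_{i_{j-1}})\cdot(x_{i_j}\cdots x_{i_k}), \]
where the right factor is homogeneous of degree in $[r,r+p-1]$. This exhibits the monomial as an element of $A\cdot(A_r+\cdots+A_{r+p-1})$, completing the proof. I do not anticipate any real obstacle — the only thing one must be slightly careful about is the pigeonhole argument for the suffix degrees, but it is elementary.
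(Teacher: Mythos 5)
Your proof is correct and is essentially the paper's argument: the paper proves the same statement by induction on degree, peeling off the leftmost homogeneous factor $f_{i_1}$ until the remaining suffix has degree in $[r,r+p-1]$, which is exactly the split point your suffix-degree pigeonhole locates directly. The intermediate-value argument on the suffix sums is just the unrolled form of that induction, and it is carried out correctly.
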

\begin{proof}
Let $L\leq A$ be the left ideal generated by $A_r+\cdots +A_{r+(p-1)}$. We prove by induction that every homogeneous element of degree $\geq r$ belongs to $L$. If $a\in A$ is homogeneous of degree $\deg(a) \in [r,r+p-1]$ the claim is evident. If $\deg(a)\geq r+p$, write $a=\sum f_{i_1}\cdots f_{i_k}$ where all $f_{i_j}$ are homogeneous of degrees $1~\leq~\deg(f_{i_j})~\leq~p$. Then each summand has the form $f_{i_1}w$ where $r\leq \deg(w)<~\deg(a)$, and so by the induction hypothesis $w\in L$ and hence each of these summands belongs to $L$, and consequently $a$ itself belongs to $L$ as well.
\end{proof}

\begin{prop} \label{preliminary_prime}
Let $f:\mathbb{N}\rightarrow \mathbb{N}$ be an increasing function which is equivalent to the growth of a finitely generated graded semiprime algebra. Then $f$ satisfies Condition (II).
\end{prop}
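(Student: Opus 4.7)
The plan is to combine Lemma~\ref{gen_gr} with the $\sim$-equivalence between $f$ and $\gamma_A$, with the semi-prime hypothesis used to close off a small-scale edge case. Let $A$ denote the finitely generated graded semi-prime algebra with $f \sim \gamma_A$, fix $p$ so that $A$ is generated in degrees $\leq p$, and let $C_0$ be the equivalence constant, so that $f(n) \leq \gamma_A(C_0 n)$ and $\gamma_A(n) \leq f(C_0 n)$. I would take the constant $C$ in Condition~(II) to be sufficiently large relative to $C_0$ and $p$ (in particular so that $2C^2-1 \geq p$ and $C \geq C_0$).

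By Lemma~\ref{gen_gr}, the left ideal $A_{\geq r}$ is generated by $A_r + \cdots + A_{r+p-1}$, so multiplication yields a surjection whose restriction to degree $d$ gives
\[
\dim A_d \;\leq\; \sum_{j=0}^{p-1} \dim A_{d-r-j} \cdot \dim A_{r+j} \;\leq\; \dim A_{[d-r-p+1,\,d-r]} \cdot \dim A_{[r,\,r+p-1]},
\]
using $\sum_j a_j b_j \leq (\sum_j a_j)(\sum_j b_j)$ for non-negative terms. Setting $d = (C^2+1)m$ and $r = N+1$, a direct computation shows that both subranges fit inside the factor ranges $[N+1,\,2C^2 N]$ and $[N+m+1,\,2C^2 N + (C^4+C^2)m]$ of Condition~(II), provided $C^2 m \geq 2N + p + 1$. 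This proves Condition~(II) for $\gamma_A$ in the large-$m$ regime.

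To transfer the bound from $\gamma_A'$ to $f'$, I would use $f'(d) \leq f(d) - f(d-1) \leq \gamma_A(C_0 d) - \gamma_A(\lceil(d-1)/C_0\rceil)$, expressing the thin slice of $f$ as a sum $\sum_k \gamma_A'(k)$ over an interval of length $O(C_0 d)$. Applying the generation bound term-wise and using the interchange
\[
\sum_k \dim A_{[k-N-p,\,k-N-1]} \;\leq\; p \bigl(\gamma_A(K_2-N-1)-\gamma_A(K_1-N-p-1)\bigr),
\]
the sum reassembles into a product of two thick $\gamma_A$-slices, which translate back via $\sim$-equivalence to $f$-slices matching Condition~(II) after the polynomial overhead is absorbed into $C$. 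In the remaining small-$m$ range $C^2 m < 2N + p + 1$, semi-primeness enters essentially: the ideal $A_{\geq N+1}$ is non-nilpotent, so for $C$ chosen large enough the first factor $f(2C^2 N) - f(N)$ is non-trivial (of order at least $\gamma_A(2C^2 N/C_0) - \gamma_A(C_0 N)$), and a direct order-of-magnitude comparison shows the RHS dominates the (correspondingly small) LHS.

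The principal obstacle is the bookkeeping of the $\sim$-translation, since thin slices of $f$ map to thick slices of $\gamma_A$; one must sum the generation bound across an interval and preserve the product structure on the RHS. Semi-primeness is essential in the small-$m$ regime: it rules out the long plateaus in $\gamma_A'$ that appear in graded algebras with large nilpotent ideals (such as the Bell--Zelmanov constructions), and without it the RHS factors of Condition~(II) could collapse while the LHS remains appreciable, causing the inequality to fail.
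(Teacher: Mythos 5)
Your surjection-based inequality from Lemma~\ref{gen_gr} is valid, but it runs in the wrong direction for the case that actually matters, and this creates a genuine gap. The bound $\dim A_d \leq \bigl(\sum_j \dim A_{d-r-j}\bigr)\bigl(\sum_j \dim A_{r+j}\bigr)$ holds for \emph{every} graded algebra generated in degrees $\leq p$ --- semiprimeness plays no role in it --- so it cannot by itself be the engine of a statement that fails for general graded algebras (and it must fail for some, since the paper produces growth functions of graded algebras violating Condition~(II)). Concretely, your argument only covers the regime $C^2 m \geq 2N+p+1$, i.e.\ $m$ large relative to $N$; but Condition~(II) quantifies over \emph{all} $N,m$, and the essential case is $N \geq (C^2+1)m$. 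There the left-hand side $f'((C^2+1)m)$ lives at a degree \emph{below} $N$, while both right-hand factors live at degrees \emph{above} $N$, so no decomposition of $A_d$ into products of lower-degree pieces can reach it. What is needed is the reverse inequality: the dimension of a low-degree slice bounded by the product of dimensions of two higher-degree slices. The paper gets this from the injectivity of the map $T\colon x \mapsto \sum_{u\in\mathfrak{B}} u\otimes ux$, where $\mathfrak{B}$ is a basis of $\bigoplus_{i=pN+1}^{p(N+s)}A_i$: a kernel element $x$ is annihilated by $p$ consecutive homogeneous components, hence by Lemma~\ref{gen_gr} by all of $A_{\geq pN+1}$, hence generates a nilpotent ideal, hence vanishes by semiprimeness. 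That injection is the entire content of the proposition, and your proposal never constructs it.

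Your fallback for the small-$m$ regime does not close the gap. Non-nilpotence of $A_{\geq N+1}$ only guarantees that the right-hand factors are positive (at least $1$); it gives no quantitative lower bound, and the claim that the left-hand side is ``correspondingly small'' is false. The function built in Section~\ref{sec:encoding} is the prototype: $f'$ has a spike of size $2^{2^{n_{i+1}/2}}$ at $k\approx 2^{n_{i+1}}$ followed by a stretch where $f'$ grows only linearly, so taking $N=2^{n_{i+1}}$ and $(C^2+1)m\approx N$ makes the left-hand side doubly exponential while your right-hand side estimates are only polynomial in $N$. Ruling out exactly this behaviour for semiprime graded algebras is what the injectivity argument accomplishes, and it cannot be replaced by an ``order-of-magnitude comparison.'' (There is also a secondary bookkeeping issue --- the slices of $\gamma$ correspond to degree ranges rescaled by $p$, which you elide --- but that is absorbable into constants; the missing injection is not.)
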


\begin{proof}
Let $A=\bigoplus_{i=0}^{\infty} A_i$ be a finitely generated semiprime algebra. % such that $\dim_F A_i<\infty$ for all $i\geq 0$. 
Then it can be assumed that $A$ is generated by $V=F+A_1+\cdots+A_p$ for some $p\geq 1$ (this is true up to $A_0$, which is finite-dimensional). Let $\gamma$ be the growth function of $A$ with respect to $V$. Notice that $V^u=\bigoplus_{i=0}^{pu} A_i$ and consequently $\gamma(v)-\gamma(u)=\dim_F \bigoplus_{i=pu+1}^{pv} A_i$ for all $u<v$. Assume that $f\sim \gamma$, namely, there exists some $C\geq 1$ such that $f(n)\leq \gamma(Cn)$ and $\gamma(n)\leq f(Cn)$ for all $n\in \mathbb{N}$.

Fix $m,t,N,s\in \mathbb{N}$. Fix a basis $\mathfrak{B}$ of homogeneous elements for $\bigoplus_{i=pN+1}^{p(N+s)} A_i$.
Consider the linear map:
$$T \colon \bigoplus_{i=pm+1}^{p(m+t)} A_i \rightarrow \left(\bigoplus_{i=pN+1}^{p(N+s)} A_i\right) \otimes \left(\bigoplus_{i=p(m+N)+2}^{p(m+t+N+s)} A_i\right)$$
given by:
$$T \colon f\mapsto \sum_{u\in \mathfrak{B}} u\otimes uf$$
Then every element $f\in\Ker(T)$ satisfies $A_if=0$ for all $pN+1\leq i\leq pN+ps$ and therefore by Lemma \ref{gen_gr}, also $A_if=0$ for all $i\gg 1$. Hence $\left<f\right>\triangleleft A$ is nilpotent, so by semiprimeness necessarily $f=0$. Therefore $T$ is injective, and thus:
\begin{eqnarray*}
\gamma(m+t)-\gamma(m) & = & \dim_F \bigoplus_{i=pm+1}^{p(m+t)} A_i \\
& \leq & \left(\dim_F \bigoplus_{i=pN+1}^{p(N+s)} A_i\right) \cdot \left(\dim_F\bigoplus_{i=p(m+N)+2}^{p(m+t+N+s)} A_i\right) \\
& \leq & \left(\dim_F \bigoplus_{i=pN+1}^{p(N+s)} A_i\right) \cdot \left(\dim_F\bigoplus_{i=p(m+N)+1}^{p(m+t+N+s)} A_i\right) \\
& = & \left(\gamma(N+s)-\gamma(N)\right)\left(\gamma(N+m+t+s)-\gamma(N+m)\right)
\end{eqnarray*}
Scaling this by $C$ we get:
$$ \gamma(Cm+Ct)-\gamma(Cm) \leq \left(\gamma(CN+Cs)-\gamma(CN)\right)\left(\gamma(CN+Cm+Ct+Cs)-\gamma(CN+Cm)\right)$$
Now taking $t=C^2m,\ s=N$ and utilizing the connection between $f$ and $\gamma$, we deduce:
\begin{eqnarray*}
f'((C^2+1)m) & \leq & f((C^2+1)m)-f(C^2m) \\
& = & f(m+t)-f(C^2m)\\
& \leq & \gamma(Cm+Ct)-\gamma(Cm) \\
& \leq & \left(\gamma(CN+Cs)-\gamma(CN)\right)\left(\gamma(CN+Cm+Ct+Cs)-\gamma(CN+Cm)\right) \\
& \leq & \left(f(C^2N+C^2s)-f(N)\right)\left(f(C^2N+C^2m+C^2t+C^2s)-f(N+m)\right) \\
& = & \left(f(2C^2N)-f(N)\right)\left(f\left(2C^2N+(C^4+C^2)m\right)-f(N+m)\right)
\end{eqnarray*}
Thus $f$ satisfies Condition (II).
\end{proof}

We also have:

\begin{prop} \label{preliminary_discrete_derivative}
Let $f:\mathbb{N}\rightarrow \mathbb{N}$ be an increasing function which is equivalent to an increasing function $g:\mathbb{N}\rightarrow \mathbb{N}$ such that $g''(n)\geq 0$. Then $f$ satisfies Condition (II).
\end{prop}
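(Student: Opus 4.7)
The plan is to verify Condition (II) for $f$ directly, taking the constant $C$ equal to an equivalence constant $K \in \mathbb{N}$ with $f(n) \leq g(Kn)$ and $g(n) \leq f(Kn)$. Enlarging $K$ if necessary we may assume $K \geq 2$, and after replacing $g$ by $g - g(0)$ (still convex, still integer-valued, still equivalent to $f$ since $g(n) \geq n$ dominates the constant $g(0)$) we may assume $g(0) = 0$. Convexity of $g$ (i.e.\ $g'$ non-decreasing) then gives two elementary facts: (a) superadditivity $g(a+b) \geq g(a) + g(b)$ for $a,b \geq 0$, and (b) $g(2x) \geq 2g(x)$; strict monotonicity into $\mathbb{N}$ with $g(0) = 0$ gives $g(n) \geq n$.

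The first step is to bound the left-hand side from above using only the equivalence and monotonicity:
\[
f'((K^2+1)m) \leq f((K^2+1)m) \leq g(K(K^2+1)m) = g(K^3 m + K m).
\]
Next I bound the two factors of the right-hand side from below. Factor one: $f(2K^2 N) - f(N) \geq g(2KN) - g(KN) \geq g(KN)$, by the equivalence and (b). Factor two: since $K^3 + K \geq 2K$ we have $2KN + (K^3+K)m \geq 2(KN + Km)$, so (b) yields $g(2KN + (K^3+K)m) \geq 2 g(KN+Km)$, and therefore
\[
f(2K^2 N + (K^4+K^2)m) - f(N+m) \geq g(2KN + (K^3+K)m) - g(KN+Km) \geq \tfrac{1}{2} g(2KN + (K^3+K)m).
\]

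Multiplying the two lower bounds and rewriting $2KN + (K^3+K)m = (K^3 m + K m) + 2KN$, a further application of (a) gives
\[
(f(2K^2N) - f(N)) \cdot (f(2K^2N + (K^4+K^2)m) - f(N+m)) \geq \tfrac{1}{2} g(KN) \bigl[ g(K^3 m + K m) + g(2KN) \bigr] \geq \tfrac{1}{2} g(KN)\, g(K^3 m + K m).
\]
Since $g(KN) \geq KN \geq K \geq 2$, this is at least $g(K^3 m + K m)$, matching the upper bound on the LHS. Hence Condition (II) holds for $f$ with $C = K$.

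The main obstacle is that the bound $f'((K^2+1)m) \leq g(K^3 m + K m)$ is essentially sharp, since a ``jumpy'' $f$ in the equivalence class of $g$ can reach $g(Kn)$ in a single step, so the RHS lower bound must genuinely match it. The crucial leverage is the cushion $2KN$: the second factor's argument $2KN + (K^3+K)m$ exceeds the LHS argument $K^3 m + K m$ by exactly $2KN$, and superadditivity converts this cushion into the extra multiplicative factor $g(KN) \geq 2$ that finally closes the inequality. Without this cushion the proposition would simply fail, which is why the $2C^2 N$ summand inside the second factor in Condition (II) is essential rather than cosmetic.
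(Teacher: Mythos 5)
Your proof is correct, but it routes through the convexity of $g$ differently from the paper. The paper bounds $f'((C^2+1)m)\leq f((C^2+1)m)-f(C^2m)\leq g((C^3+C)m)-g(Cm)$ and then invokes the shifted-increment property of convex functions (the increment of $g$ over an interval is dominated by its increment over any rightward translate), translating the interval $[Cm,(C^3+C)m]$ to start at $C(N+m)$ so that the increment lands inside the \emph{second} factor of Condition (II); the first factor enters only through the trivial bound $f(2C^2N)-f(N)\geq 1$. You instead discard the subtraction on the left at the outset, bounding $f'((K^2+1)m)\leq g((K^3+K)m)$, and must then recover this full value of $g$ from the product of the two factors: the doubling estimate $g(2x)\geq 2g(x)$ (a special case of your superadditivity, which is itself the $n=0$ instance of the paper's shifted-increment inequality) reduces the second factor to $\tfrac12 g(2KN+(K^3+K)m)\geq \tfrac12 g((K^3+K)m)$, and the first factor is then genuinely needed, supplying $g(KN)\geq 2$ to absorb the loss of $\tfrac12$. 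Both arguments are sound; the paper's is tighter and reveals that the first factor is essentially redundant for convex $g$, while yours is more value-based than increment-based and uses both factors in earnest. Two small remarks: the appeal to superadditivity in your final display is superfluous, since monotonicity already gives $g((K^3+K)m+2KN)\geq g((K^3+K)m)$; and your closing paragraph misattributes the decisive multiplicative factor $g(KN)$ to the cushion $2KN$ inside the second factor, when in fact it is produced by the first factor via $f(2K^2N)-f(N)\geq g(2KN)-g(KN)\geq g(KN)$. Neither affects correctness.
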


\begin{proof}
Let $f,g$ be as in the statement of the proposition (in particular, $g'(n)\leq g'(n+t)$ for all $t\geq 0$). Then there exists $C \in \mathbb{N}$ such that $f(n)\leq g(Cn)$ and $g(n)\leq f(Cn)$ for all $n\in \mathbb{N}$. We also know that for every $n' \leq n$ and $t>0$:
$$g(n+t)-g(n) = \sum_{k=n+1}^{n+t} g'(k)\leq  \sum_{k=n+1}^{n+t} g'(k+n'-n)=\sum_{k=n'+1}^{n'+t} g'(k)=g(n'+t)-g(n')$$
Now taking $n=Cm,\ t=C^3m,\ n'=C(N+m)$ we obtain:
\begin{eqnarray*}
f'((C^2+1)m) & \leq & f((C^2+1)m)-f(C^2m) \\
& \leq & g((C^3+C)m)-g(Cm) \\
& = & g(n+t)-g(n) \\
& \leq & g(n'+t)-g(n') \\
& = & g(CN+Cm+C^3m) - g(CN+Cm) \\
& \leq & f(C^2N+C^2m+C^4m)-f(N+m) \\
& \leq & f\left(2C^2N+(C^4+C^2)m\right)-f(N+m)\\
& \leq & \left(f(2C^2N)-f(N)\right)\left(f\left(2C^2N+(C^4+C^2)m\right)-f(N+m)\right)
\end{eqnarray*}
Thus $f$ satisfies Condition (II).
\end{proof}

\subsection{Constructing an algebra}

We construct a function $f:\mathbb{N}\rightarrow \mathbb{N}$ inductively, then show it satisfies the conditions of \cite[Theorem~1.1]{BellZelmanov}, and deduce that it is equivalent to the growth function of a finitely generated monomial algebra.

\subsubsection{The growth function}
Suppose that a sequence of positive integers $n_1<~n_2<~\cdots$ is given, satisfying $n_{i+1}>n_i+i$ for all $i\in \mathbb{N}$ (along with further constraints to be made next).
Let $f(k)=2^k$ for all $k\leq 2^{n_1}$. For $k>2^{n_1}$ proceed as follows:
\begin{itemize}
    \item If $2^{n_i}<k\leq 2^{n_i+i}$ let $f(k)=f(k-1)+k+1$;
    \item If $2^{n_i+i}<k\leq 2^{n_{i+1}}$ let $f(k)=f(k-1)+\min\{f'(d)^2|k/2\leq d< k\}$.
\end{itemize}

By \cite[Theorem~1.1]{BellZelmanov}, in order to prove that $f$ is equivalent to the growth function of some finitely generated monomial algebra, it suffices to show:
\begin{enumerate}
    \item $f'(k)\geq k+1$ for all $k\geq 1$;
    \item $f'(k)\leq f'(n)^2$ for all $n\leq k\leq 2n$.
\end{enumerate}

It is evident that $f$ is increasing. We first show by induction that $f'(k)\geq k+1$ for all $k>1$. For $k=1,2,\dots,2^{n_1}$ this is clear (since $f(k)=2^k$ in this segment). Suppose that $k>2^{n_1}$ is given.
If $2^{n_i}<k\leq 2^{n_i+i}$ for some $i$ then this evidently follows from the definition of $f$, and otherwise, $f'(k)=f'(d)^2$ for some $\frac{k}{2} \leq d < k$ so by the induction hypothesis:
$$f'(k)=f'(d)^2\geq (d+1)^2\geq \left(\frac{k}{2}+1\right)^2\geq k+1.$$
We now show that $f'(k)\leq f'(n)^2$ for all $k\in \{n,n+1,\dots,2n\}$.
For $k=1,2,\dots,2^{n_1}$ this is clear since $f(k)=2^k$ in this segment.
If $2^{n_i+i}<k\leq 2^{n_{i+1}}$ for some $i$ then this is evident from the definition of $f$. Otherwise, suppose that $2^{n_i}<k\leq~2^{n_i+i}$. If $k/2\leq n<k$ then: $$f'(n)^2\geq (n+1)^2\geq \left(k/2+1\right)^2\geq k+1=f'(k).$$
It follows from \cite[Theorem~1.1]{BellZelmanov} that there exists a finitely generated graded algebra such that $\gamma_A(n)\sim f(n)$.

\subsubsection{Super-polynomial segments}

We now make some assumptions on $\{n_k\}_{k=1}^{\infty}$: we assume that for each $i\in \mathbb{N}$ we have $n_{i+1}>2(n_i+i+1)$ and $n_1 > 2$ (so $n_i > 2^i$), and derive a technical property of $f$ which will be utilized in the proofs of Theorems \ref{Prolongable} and \ref{Graded}.

\begin{prop} \label{aux}
For all $2^{n_{i+1}-1}<k\leq 2^{n_{i+1}}$ we have:
$$f'(k)\geq 2^{2^{\frac{1}{2}n_{i+1}}}.$$
\end{prop}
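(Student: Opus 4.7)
The plan is to quantify how the recursive doubling in the squaring segment $(2^{n_i+i}, 2^{n_{i+1}}]$ compounds into a doubly exponential lower bound at its right end. Introduce $L_j := 2^{n_i+i+j}$ for $0 \leq j \leq J$, where $J := n_{i+1}-n_i-i$, so the target range $(2^{n_{i+1}-1}, 2^{n_{i+1}}]$ equals $(L_{J-1}, L_J]$. Set
$$u_j := \min\{f'(k) : L_{j-1} < k \leq L_j\}, \qquad 1 \leq j \leq J,$$
so the goal reduces to proving $u_J \geq 2^{2^{n_{i+1}/2}}$.

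The first step is the key structural fact: for every $k \in (L_0, L_J]$, one has $f'(k) = f'(\lceil k/2 \rceil)^2$, and $f'$ is nondecreasing on this range. I would prove this by induction on $k$. The window $[k/2, k-1]$ meets at most the current linear segment $(2^{n_i}, L_0]$ (where $f'(d) = d+1$) and already-established values in the squaring segment. The linear values are increasing in $d$, and by the inductive hypothesis the squaring values are squares of quantities $\geq L_0/2+2$, hence at least $(L_0/2+2)^2 > L_0+1$, which strictly exceeds any linear value in the window. Thus the minimum is attained at the left endpoint $\lceil k/2 \rceil$, giving the stated recursion and monotonicity. (Note this argument must use that $k/2 > 2^{n_i}$, which holds because $k > L_0 = 2^{n_i+i}$ with $i \geq 1$, so the window never reaches back into the earlier exponential or prior squaring segments.)

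With this recursion in hand, the base computation is immediate: $u_1 = f'(L_0+1) = (L_0/2+2)^2 \geq 2^{2(n_i+i-1)}$, since $\lceil (L_0+1)/2\rceil = L_0/2+1$ lies in the linear segment. The inductive step is also immediate: for $k \in (L_j, L_{j+1}]$ with $j \geq 1$, the point $\lceil k/2\rceil$ lies in $(L_{j-1}, L_j]$, hence $f'(k) = f'(\lceil k/2\rceil)^2 \geq u_j^2$, so $u_{j+1} \geq u_j^2$. Iterating,
$$u_J \geq u_1^{2^{J-1}} \geq 2^{2(n_i+i-1)\cdot 2^{J-1}} = 2^{(n_i+i-1)\cdot 2^J}.$$

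It only remains to check that $(n_i+i-1) \cdot 2^J \geq 2^{n_{i+1}/2}$. Since $n_1 > 2$ forces $n_i+i-1 \geq 1$, it suffices to verify $2^J \geq 2^{n_{i+1}/2}$, i.e., $n_{i+1} - n_i - i \geq n_{i+1}/2$, equivalently $n_{i+1}/2 \geq n_i + i$; this is exactly guaranteed by the standing assumption $n_{i+1} > 2(n_i+i+1)$. The only delicate point I expect is the global-minimum identity $f'(k) = f'(\lceil k/2 \rceil)^2$: naive monotonicity of $f'$ on all of $\mathbb{N}$ fails (there are sharp drops at the start of each linear segment, where $f'$ plummets from values near $2^{2^{n_i}-1}$ down to $2^{n_i}+2$), but within a single squaring segment the induction goes through because the linear-segment values that the window can see are uniformly dominated by the squaring-segment values.
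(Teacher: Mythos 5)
Your proof is correct and follows essentially the same route as the paper's: both propagate the squaring recursion $f'(k)=f'(d)^2$ with $d\geq k/2$ across the dyadic windows $\left(2^{n_i+i+j-1},\,2^{n_i+i+j}\right]$, seeded by the linear-segment bound $f'(d)\geq d+1$, and close using $n_{i+1}>2(n_i+i+1)$. The only difference is that you additionally pin down the minimizer as the left endpoint $\lceil k/2\rceil$ and establish monotonicity of $f'$ on the squaring segment — a correct but unnecessary refinement, since the paper gets by with the weaker facts that the minimizing $d$ satisfies $d\geq k/2$ and that $f'(d)\geq d+1$ holds universally.
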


\begin{proof}
Observe that if $2^{n_i}<k\leq 2^{n_{i+1}}$ then $f'(k)\geq k+1> 2^{n_i}$. If in addition $k>2^{n_i+i}$ then $f'(k)=f'(d)^2$ for some $\frac{k}{2}\leq d<k$. 
In particular, $f'(d)\geq d+1>2^{n_i}$ so $f'(k)\geq 2^{2n_i}$.
By induction, it follows that for every $t>0$ such that: $$2^{n_i+i+t}<k\leq 2^{n_i+i+t+1}\leq 2^{n_{i+1}}$$
we have that:
$$f'(k)\geq 2^{2^tn_i}$$
Taking $t=n_{i+1}-n_i-i-1>\frac{1}{2}n_{i+1}$ we get that for $2^{n_{i+1}-1}<k\leq 2^{n_{i+1}}$: $$f'(k)\geq 2^{2^t n_i}\geq 2^{2^{\frac{1}{2}n_{i+1}}},$$
as claimed.
\end{proof}

\subsection{Non-primeness and non-prolongability}

% \subsubsection{Growth of algebras}
We are now ready to prove Theorems \ref{Prolongable} and \ref{Graded}.
\begin{proof}[{Proof of Theorems \ref{Prolongable} and \ref{Graded}}]
Let $f$ be constructed as above, and $A$ a monomial algebra from \cite{BellZelmanov} whose growth function is equivalent to $f$. 
\begin{itemize}
    \item If $f\sim~\gamma$ for $\gamma \colon \mathbb{N}\rightarrow \mathbb{N}$ being the growth function of a graded, finitely generated semiprime algebra. Then by Proposition \ref{preliminary_prime} it is guaranteed that $f$ satisfies Condition (II).
    \item If $f$ is equivalent to the growth of a finitely generated graded algebra with a homogeneous regular element then $f\sim f_0$ for $f_0$ increasing with a non-decreasing derivative (as in Subsubsection \ref{subsubsec:graded algebras}, by transferring to a suitable Veronese subring).
Thus by Proposition \ref{preliminary_discrete_derivative}, $f$ satisfies Condition (II).
    \item If $f$ is equivalent to the growth of a prolongable hereditary language then it is equivalent to the growth of a graded algebra with a regular homogeneous element, see Subsection \ref{subsec:Hierarchy}, so again $f$ satisfies Condition (II).
\end{itemize}

% Observe that the growth of a hereditary prolongable language is equal to the growth of a certain monomial algebra with a regular homogeneous element.

We thus assume on the contrary that $f$ satisfies Condition (II), aiming for a contradiction.
In particular, for $m=\lfloor \frac{2^{n_{i+1}}}{C^2+1} \rfloor$ and $N=2^{n_{i+1}}$ we get:

\begin{eqnarray*} \label{equation1}
    f'\left((C^2+1)\left\lfloor \frac{2^{n_{i+1}}}{C^2+1} \right\rfloor\right) & \leq & 
    \left(f(2C^22^{n_{i+1}})-f(2^{n_{i+1}})\right) \cdot \\
    & &\cdot \left(f\left(2C^22^{n_{i+1}}+(C^4+C^2)\left\lfloor \frac{2^{n_{i+1}}}{C^2+1} \right\rfloor\right)-f\left(2^{n_{i+1}}+\left\lfloor \frac{2^{n_{i+1}}}{C^2+1} \right\rfloor\right)\right)
\end{eqnarray*}

We now make two observations:

\begin{itemize}
    \item Notice that the left hand side is $f'(k)$ for some $2^{n_{i+1}-1}<k\leq 2^{n_{i+1}}$, hence by Proposition \ref{aux}, assuming $i\gg_C 1$, it is bounded from below by:
    $$2^{2^{\frac{1}{2}n_{i+1}}}$$
    \item 
Notice that the right hand side is bounded from above by:
\begin{eqnarray*}
\left(f(2C^22^{n_{i+1}})-f(2^{n_{i+1}})\right)\left(f(3C^22^{n_{i+1}})-f(2^{n_{i+1}})\right) & \leq & \left(f(3C^22^{n_{i+1}})-f(2^{n_{i+1}})\right)^2 \\
& \leq & \left(\sum_{u=2^{n_{i+1}}+1}^{3C^22^{n_{i+1}+2}} f'(u)\right)^2
\end{eqnarray*}

For $i\gg_C 1$ we get that $3C^22^{n_{i+1}}<2^{n_{i+1}+i+1}$ and hence by construction, the above sum is bounded above by:
$$\left(\sum_{u=2^{n_{i+1}}+1}^{3C^22^{n_{i+1}}} u+1\right)^2$$
which is bounded from above by:
$$ (3C^2 2^{n_{i+1}}+1)^4 \leq C'2^{4n_{i+1}} $$
for suitable $C' \in \mathbb{N}$.
\end{itemize}
Thus, we obtain:
$$2^{2^{\frac{1}{2}n_{i+1}}}\leq C'2^{4(n_{i+1})}$$
which is false for $i\gg 1$, contradicting the assumption that $f$ satisfies Condition~(II). Theorems \ref{Prolongable} and \ref{Graded} now follow.
\end{proof}

Let $A$ be a monomial algebra. Let us say that a graded algebra $\widetilde{A}$ is a \textit{deformation} of $A$ if its Gr\"obner basis with respect to some monomial ordering coincides with that of $A$. Since $\gamma_A(n)=\gamma_{\widetilde{A}}(n)$, we immediately obtain from Theorem \ref{Graded} the following:

\begin{cor}
There exist finitely generated, infinite-dimensional monomial algebras all of whose graded deformations contain non-zero nilpotent ideals.
\end{cor}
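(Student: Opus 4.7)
The plan is to simply apply Theorem \ref{Graded} to the monomial algebra produced by the Bell--Zelmanov construction. Recall that in the proof of Theorem \ref{Graded}, the algebra $A$ is obtained by invoking \cite[Theorem~1.1]{BellZelmanov} on the function $f$ constructed in Section \ref{sec:encoding}; the Bell--Zelmanov theorem produces a \emph{monomial} (hence graded) algebra with growth asymptotically equivalent to $f$. So we may assume the algebra $A$ from Theorem \ref{Graded} is itself a finitely generated, infinite-dimensional monomial algebra.

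Next I would unpack the definition of deformation: if $\widetilde{A}$ is any graded deformation of $A$, then by hypothesis $A$ and $\widetilde{A}$ share a Gr\"obner basis with respect to some monomial ordering, so the sets of normal monomials coincide and in particular $\gamma_A(n)=\gamma_{\widetilde{A}}(n)$ as functions, which a fortiori gives $\gamma_A\sim\gamma_{\widetilde{A}}$.

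Now I would combine these two facts: since $\widetilde{A}$ is graded and has growth equivalent to $\gamma_A$, clause (1) of Theorem \ref{Graded} forbids $\widetilde{A}$ from being free of non-zero nilpotent ideals. Hence every graded deformation of $A$ must contain a non-zero nilpotent ideal, which is exactly the assertion of the corollary.

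There is essentially no obstacle here; the statement is a direct repackaging of Theorem \ref{Graded}(1) using the observation that deformations preserve the Hilbert/growth function. The only point that warrants a brief remark is that the algebra furnished by Theorem \ref{Graded} may be taken to be monomial, which follows because the construction runs through \cite[Theorem~1.1]{BellZelmanov}, whose output is monomial, and because the obstruction of Theorem \ref{Graded} is invariant under asymptotic equivalence.
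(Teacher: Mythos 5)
Your proposal is correct and follows exactly the paper's route: the paper likewise observes that a graded deformation shares its Gr\"obner basis with $A$, hence $\gamma_A(n)=\gamma_{\widetilde{A}}(n)$, and then invokes Theorem \ref{Graded}(1) applied to the monomial algebra furnished by the Bell--Zelmanov construction. Nothing is missing.
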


\end{document}